\DeclareMathOperator{\bond}{\mathsf{bo}}
\DeclareMathOperator{\circuit}{\mathsf{ci}}
\DeclareMathOperator{\colspace}{colspace}
\DeclareMathOperator{\coker}{coker}
\DeclareMathOperator{\Cut}{Cut}
\DeclareMathOperator{\Flow}{Flow}
\DeclareMathOperator{\Hom}{Hom}
\DeclareMathOperator{\im}{im}
\DeclareMathOperator{\rank}{rank}
\DeclareMathOperator{\Span}{span}
\DeclareMathOperator{\Supp}{Supp}
\DeclareMathOperator{\supp}{supp}
\newcommand{\includefigure}[3]{{
  \begin{center}
  \resizebox{#1}{#2}{\includegraphics{{#3}}}
  \end{center}}}
\newtheorem{theorem}{Theorem}
\numberwithin{theorem}{section}
\newtheorem{proposition}[theorem]{Proposition}
\newtheorem{corollary}[theorem]{Corollary}
\newtheorem{lemma}[theorem]{Lemma}
\theoremstyle{definition}
\newtheorem{definition}[theorem]{Definition}
\newtheorem{remark}[theorem]{Remark}
\newtheorem{example}[theorem]{Example}
\newcommand{\excise}[1]{}
\newcommand{\Ldu}{L^{\mathrm{du}}}
\newcommand{\Lud}{L^{\mathrm{ud}}}
\newcommand{\CC}{\mathcal{C}}
\newcommand{\FF}{\mathcal{F}}
\newcommand{\LL}{\mathcal{L}}
\newcommand{\MM}{\mathcal{M}}
\newcommand{\RR}{\mathcal{R}}
\newcommand{\HH}{\tilde{H}}
\newcommand{\Qq}{\mathbb{Q}}
\newcommand{\Rr}{\mathbb{R}}
\newcommand{\Zz}{\mathbb{Z}}
\newcommand{\betti}{\tilde\beta}
\newcommand{\bd}{\partial}
\newcommand{\cbd}{\partial^*}
\newcommand{\isom}{\cong}
\newcommand{\x}{\times}
\newcommand{\scp}[1]{\left\langle#1\right\rangle} 
\newcommand{\st}{\colon}
\newcommand{\sm}{\setminus}
\newcommand{\0}{\emptyset}
\newcommand{\tor}{{\bf T}} 
\newcommand{\torh}{{\bf t}}
\newcommand{\uncalcut}{\bar\chi}  
\newcommand{\calcut}[2]{\chi_{#1}(#2)} 
\newcommand{\calcutta}[2]{\chi(#1,#2)} 
\newcommand{\uncalflow}{\varphi} 
\newcommand{\calflow}{\hat\varphi} 
\begin{document}
\title{Cuts and Flows of Cell Complexes}

\author{Art M.\ Duval}
\address{Department of Mathematical Sciences\\ University of Texas at El Paso}
\email{artduval@math.utep.edu}
\author{Caroline J.\ Klivans}
\address{Departments of Applied Mathematics, Computer Science, and Mathematics \\ Brown University}
\email{klivans@brown.edu}
\author{Jeremy L.\ Martin}
\address{Department of Mathematics \\ University of Kansas}
\email{jlmartin@ku.edu}
\date{\today}
\thanks{Corresponding author: Art Duval, email artduval@math.utep.edu.\\
Third author supported in part by a Simons Foundation Collaboration Grant and by National Security Agency grant no.~H98230-12-1-0274. It is our pleasure to thank Andrew Berget, John Klein, Russell Lyons, Ezra Miller, Igor Pak, Dave Perkinson, and Victor Reiner for valuable discussions, some of which took place at the 23rd International Conference on Formal Power Series and Algebraic Combinatorics (Reykjavik, 2011).  We are also grateful for the suggestions from an anonymous referee.}
\subjclass[2010]{%
05C05, 
05C21, 
05C50, 
05E45, 
11H06} 

\keywords{cut lattice, flow lattice, critical group, spanning forest, cell complex}

\begin{abstract}
We study the vector spaces and integer lattices of cuts and flows
associated with an arbitrary finite CW complex, and their
relationships to group invariants including the
critical group of a complex.  Our results extend to higher dimension the
theory of cuts and flows in graphs, most notably the work of Bacher,
de~la~Harpe and Nagnibeda. We construct explicit bases for the cut and flow spaces,
interpret their coefficients topologically, and give sufficient
conditions for them to be integral bases of the cut and flow lattices. 
Second, we determine the precise relationships between the discriminant groups
of the cut and flow lattices and the higher critical and cocritical groups with error terms
corresponding to torsion (co)homology.  As an application, we generalize a result of Kotani and
Sunada to give bounds for the complexity, girth, and connectivity of a complex in
terms of Hermite's constant.
\end{abstract}
\maketitle

\section{Introduction}
This paper is about vector spaces and integer lattices of cuts and
flows associated with a finite cell complex.
Our primary motivation is the study of critical groups of cell
complexes and related group invariants.  The critical group of a graph
 is a finite abelian group whose order
is the number of spanning forests.  The definition was introduced
independently in several different settings, including arithmetic
geometry~\cite{Lor}, physics \cite{Dhar}, and algebraic
geometry~\cite{BHN} (where it is also known as the Picard group or
Jacobian group).  It has received considerable recent attention for
its connections to discrete dynamical systems, tropical geometry, and
linear systems of curves; see, e.g.,~\cite{BN,Biggs-Chip,BL,HMY}.

In previous work~\cite{Critical}, the authors extended the definition
of the critical group to a cell complex~$\Sigma$ of arbitrary
dimension.  To summarize, the critical group $K(\Sigma)$ can be
calculated using a reduced combinatorial Laplacian, and its order is a
weighted enumeration of the cellular spanning trees of~$\Sigma$. 
Moreover, the action of the critical group on cellular $(d-1)$-cochains
gives a model of discrete flow on~$\Sigma$, generalizing the
chip-firing and sandpile models; see, e.g., \cite{Biggs-Chip,Dhar}.

Bacher, de la Harpe, and Nagnibeda first defined the lattices $\CC$
and $\FF$ of integral cuts and flows for a graph~\cite{BHN}.  By
regarding a graph as an analogue of a Riemann surface, they
interpreted the discriminant groups $\CC^\sharp/\CC$ and
$\FF^\sharp/\FF$ respectively as the Picard group of divisors and as
the Jacobian group of holomorphic forms.  In particular, they showed
that the critical group $K(G)$ is isomorphic to both $\CC^\sharp/\CC$
and $\FF^\sharp/\FF$.  Similar definitions and results appear in the work of
Biggs~\cite{Biggs-Chip}.

In the present paper, we define the \emph{cut and flow spaces} and
\emph{cut and flow lattices} of a cell complex $\Sigma$ by
\begin{align*}
\Cut(\Sigma) &= \im_\Rr\cbd, & \Flow(\Sigma) &= \ker_\Rr\bd,\\
\CC(\Sigma) &= \im_\Zz\cbd, & \FF(\Sigma) &= \ker_\Zz\bd,
\end{align*}
where $\bd$ and $\cbd$ are the top cellular boundary and coboundary
maps of $\Sigma$.  In topological terms, cut- and flow-vectors are
cellular coboundaries and cycles, respectively.  Equivalently, the
vectors in $\Cut(\Sigma)$ support sets of facets whose deletion
increases the codimension-1 Betti number, and the vectors in
$\Flow(\Sigma)$ support nontrivial rational homology classes.

In the higher-dimensional setting, the groups $\CC^\sharp/\CC$ and $\FF^\sharp/\FF$
are not necessarily isomorphic to each other.  Their precise relationship involves
several other groups: the critical group $K(\Sigma)$, a dually defined \emph{cocritical
  group} $K^*(\Sigma)$, and the \emph{cutflow group}
$\Zz^n/(\CC\oplus\FF)$.  We show that
the critical and cocritical groups are respectively isomorphic to the discriminant groups of the
cut lattice and flow lattice, and that the cutflow group mediates between them
with an ``error term'' given by homology.
Specifically, if $\dim \Sigma = d$, then we have the short exact sequences
\begin{align*}
&0 \to \Zz^n / (\CC\oplus\FF) \to \CC^\sharp/\CC\isom K(\Sigma) \to \tor(\HH_{d-1}(\Sigma;\Zz))\to 0,\\
&0 \to \tor(\HH_{d-1}(\Sigma;\Zz)) \to \Zz^n/(\CC\oplus\FF)\to \FF^\sharp/\FF \isom K^*(\Sigma) \to 0
\end{align*}
(Theorems~\ref{new-cut-theorem} and~\ref{new-flow-theorem})
where $\tor$ denotes the torsion summand.  The sizes of these groups are then given by
\begin{alignat*}{2} 
|\CC^\sharp/\CC| = |K(\Sigma)| &=  \tau(\Sigma)  & &= \tau^*(\Sigma)\cdot\torh^2,\\
|\FF^\sharp/\FF| = |K^*(\Sigma)| &=  \tau^*(\Sigma) & &= \tau(\Sigma)/\torh^2,\\
|\Zz^n/(\CC\oplus\FF)| &=  \tau(\Sigma)/\torh  & &= \tau^*(\Sigma)\cdot\torh,
\end{alignat*}
(Theorems~\ref{all-counts} and~\ref{cocritical-count}),
where $\torh=|\tor(\HH_{d-1}(\Sigma;\Zz))|$ and $\tau(\Sigma)$ and $\tau^*(\Sigma)$
are the weighted enumerators
\begin{align*}
\tau(\Sigma)&=\sum_\Upsilon|\tor(\HH_{d-1}(\Upsilon;\Zz))|^2,&
\tau^*(\Sigma)&=\sum_\Upsilon|\tor(\HH_d(\Omega,\Upsilon;\Zz))|^2,
\end{align*}
where both sums run over all cellular spanning forests $\Upsilon\subseteq\Sigma$
(see equation~\eqref{complexity})
and $\Omega$ is an acyclization of $\Upsilon$
(see Definition~\ref{acyclization}).

Before proving these results, we study the cut space (Section~\ref{cutspace-section}),
the flow space (Section~\ref{flowspace-section}), and the cut and flow lattices
(Section~\ref{integral-bases-section}) in some detail.  In order to do this, we
begin in Section~\ref{cellular-forest} by describing and enumerating cellular spanning forests of an arbitrary cell complex, generalizing our earlier work \cite{Simplicial,Cellular}.  
Similar results were independently achieved, using different techniques, by Catanzaro, Chernyak and Klein~\cite{Catanzaro}.
Our methods and results are very close to those of Lyons~\cite{Lyons}, but our technical emphasis is slightly different.

Every cellular spanning forest $\Upsilon$ naturally gives rise to
bases of the cut space (Theorem~\ref{cut-basis}) and the flow space
(Theorem~\ref{flow-basis}).  In the graphic case, these basis vectors
are simply signed characteristic vectors of fundamental cocircuits and
circuits in the graphic matroid, and they always form integral bases
for the cut and flow lattices.  For a general cellular complex, the
\emph{supports} of basis vectors are given by cocircuits and circuits in the cellular
matroid of $\Sigma$ (i.e., the matroid represented by the columns
of~$\bd$), but their \emph{entries} are not determined by the matroid.  We prove that the basis
vectors can be scaled so that their entries are torsion coefficients
of homology groups of certain subcomplexes (Theorems~\ref{calibration}
and~\ref{charflow}).  Under certain conditions on $\Upsilon$, these
bases are in fact integral bases for the cut and flow lattices
(Theorems~\ref{integral-basis-cut} and~\ref{integral-basis-flow}).
Although the matroid data alone is not enough to extend the theory of \cite{BHN}
to arbitrary cell complexes, the perspective of matroid theory will frequently be useful.

The idea of studying cuts and flows of matroids goes back to
Tutte~\cite{Tutte}.  More recently, Su and Wagner~\cite{SW} define
cuts and flows of a regular matroid (i.e., one represented by a
totally unimodular matrix~$M$); when $M$ is the boundary matrix of a
cell complex, this is the case where
the torsion coefficients are all trivial.
Su and Wagner's definitions
coincide with ours; their focus, however, is on recovering the
structure of a matroid from the metric data of its flow lattice.

In the final section of the paper, we generalize a theorem of Kotani
and Sunada~\cite{KS}, who observed that a classical inequality for
integer lattices, involving Hermite's constant (see, e.g.,
\cite{Lag}), could be applied to the flow lattice of a graph to give a
bound for girth and complexity.  We prove the corresponding result for cell
complexes (Theorem~\ref{Hermite-relations}), where ``girth'' means the
size of a smallest circuit in the cellular matroid (or, topologically,
the minimum number of facets supporting a nonzero homology class) and
``complexity'' is the torsion-weighted count of cellular spanning
trees.

\section{Preliminaries} \label{prelims}
In this section we review the tools needed throughout the
paper: cell complexes, cellular spanning trees and forests, integer
lattices, and matroids.

\subsection{Cell complexes} \label{cell}

 Our work is motivated by algebraic graph
theory, including critical groups, cut and flow spaces and lattices,
and the chip-firing game. Our central goal is to extend the theory
from graphs to higher-dimensional spaces.  Thus we work in the setting
of a finite CW complex, regarded as the higher-dimensional analogue of
a graph.  Accordingly, we begin by reviewing some of the topology of
cell complexes; for a general reference, see~\cite[p.~5]{Hatcher}.  The
reader more familiar with simplicial complexes may safely consider
that special case throughout.

Throughout the paper, $\Sigma$ will denote a finite CW complex
(which we refer to simply as a cell complex) of
dimension~$d$.  We adopt the convention that~$\Sigma$
has a unique cell of dimension~$-1$ (as though it were an abstract
simplicial complex); this will allow our results to
specialize correctly to the case $d=1$ (i.e., that~$\Sigma$ is a
graph).  We write~$\Sigma_i$ for the set of~$i$-dimensional cells
in~$\Sigma$, and $\Sigma_{(i)}$ for the $i$-dimensional skeleton of
$\Sigma$, i.e., $\Sigma_{(i)}=\Sigma_i\cup\Sigma_{i-1}\cup\cdots\cup\Sigma_0$.
Again, in keeping with simplicial-complex terminology,
a cell of dimension $d$ is called a \emph{facet}.

Unless otherwise stated, every $d$-dimensional subcomplex
$\Gamma\subseteq\Sigma$ will be assumed to have a full codimension-1
skeleton, i.e., $\Gamma_{(d-1)}=\Sigma_{(d-1)}$.  Accordingly, for
simplicity of notation, we will often make no distinction between the
subcomplex $\Gamma$ itself and its set $\Gamma_d$ of facets.

The symbol $C_i(\Sigma)=C_i(\Sigma;R)$ denotes the group of~$i$-dimensional
cellular chains with coefficients in a ring~$R$.  The
$i$-dimensional cellular boundary and coboundary maps are
respectively
$\bd_i(\Sigma;R)\colon C_i(\Sigma;R) \to C_{i-1}(\Sigma;R)$
and $\cbd_i(\Sigma;R)\colon C_{i-1}(\Sigma;R) \to C_i(\Sigma;R)$;
we will write simply $\bd_i$ and $\cbd_i$ whenever possible.

When $\Sigma$ is a graph (i.e., a cell complex of dimension~1), its top boundary map is a familiar object, namely its signed vertex-edge incidence matrix (with respect to some edge orientation).  
In this article, our goal will be to extract combinatorial information about an \emph{arbitrary} cell complex from its top-dimensional boundary map (which can be any integer matrix).

The $i^{th}$ reduced cellular homology and cohomology groups of $\Sigma$ are
respectively $\HH_i(\Sigma;R)=\ker\bd_i/\im\bd_{i+1}$ and
$\HH^i(\Sigma;R)=\ker\cbd_{i+1}/\im\cbd_i$.  We say that $\Sigma$ is
\emph{$R$-acyclic in codimension~one} if $\HH_{d-1}(\Sigma;R)=0$.  For a
graph ($d=1$), both $\Qq$- and $\Zz$-acyclicity in codimension~one are
equivalent to connectedness.  The $i^{th}$ reduced Betti number is
$\betti_i(\Sigma)=\dim\HH_i(\Sigma;\Qq)$, and the $i^{th}$ torsion
coefficient $\torh_i(\Sigma)$ is the cardinality of the torsion subgroup $\tor(\HH_i(\Sigma;\Zz))$.
We will frequently use the fact that
\begin{equation} \label{UCTC}
\tor(\HH_{d-1}(\Sigma;\Zz)) \isom \tor(\HH^d(\Sigma;\Zz))
\end{equation}
which is a special case of the universal coefficient theorem for cohomology \cite[p.~205, Corollary~3.3]{Hatcher}.
A pair of complexes $\Gamma\subseteq\Sigma$
induces a relative complex $(\Sigma,\Gamma)$, with relative homology and cohomology
$\HH_i(\Sigma,\Gamma;R)$ and $\HH^i(\Sigma,\Gamma;R)$ and torsion coefficients
$\torh_i(\Sigma,\Gamma)=|\tor(\HH_i(\Sigma,\Gamma;\Zz))|$.

While many definitions and results can be stated purely algebraically (e.g., in terms of chain complexes over $\Zz$), we regard the underlying object of interest as the cell complex (see Remark~\ref{puncturing}).

\subsection{Spanning Forests and Laplacians} \label{tree-Lap-subsection}
Our work on cuts and flows will use the theory of spanning forests
in arbitrary dimensions.  
Define a \emph{cellular spanning forest} (CSF) of $\Sigma$ to be a subcomplex
$\Upsilon\subseteq \Sigma$ such that $\Upsilon_{(d-1)}=\Sigma_{(d-1)}$ and
  \begin{subequations}
  \begin{align}
  \label{acyc-condn}  & \HH_d(\Upsilon;\Zz) = 0,\\
  \label{conn-condn}  & \rank\HH_{d-1}(\Upsilon;\Zz)=\rank\HH_{d-1}(\Sigma;\Zz), \quad\text{and}\\
  \label{count-condn} & |\Upsilon_d| = |\Sigma_d| - \betti_d(\Sigma) 
  \end{align}
  \end{subequations}
These conditions generalize the definition
of a spanning forest\footnote{That is, a maximal acyclic subgraph of $G$, not merely an acyclic subgraph containing all vertices.}
 of a graph~$G$: respectively, it is acyclic, has $c$ components, and has
$n-c$ edges, where $n$ and $c$ are the numbers of vertices and components of~$G$.
Just as in the graphic case, any two of the
conditions (\ref{acyc-condn}), (\ref{conn-condn}), (\ref{count-condn})
together imply the third; 
the proof is just a slight modification of the proof of~\cite[Proposition 3.5]{Simplicial}.
  An equivalent and perhaps simpler definition is that
a subcomplex $\Upsilon\subseteq\Sigma$ is a cellular spanning forest if and only if its $d$-cells correspond to a column basis for the cellular boundary matrix $\bd=\bd_d(\Sigma)$; however, the definition focusing on integral homology is frequently the most useful (see, e.g., Remark~\ref{torsion-in-simplicial-case}).

In the case that $\Sigma$ is $\Qq$-acyclic in codimension~one, this
definition specializes to our earlier definition of a cellular
spanning tree~\cite[Definition~2.2]{Cellular}.

There are two main reasons that enumeration of spanning forests of
cell complexes is more complicated than for graphs.  First, many
properties of graphs can be studied component by component, so that
one can usually make the simplifying assumption of connectedness; on
the other hand, a higher-dimensional cell complex cannot in general be
decomposed into disjoint pieces that are all acyclic in codimension
one.  Second, for complexes of dimension greater than or equal to two,
the possibility of torsion homology affects enumeration.

Define the $i^{th}$ \textit{up-down},
\textit{down-up} and \textit{total} Laplacian operators\footnote{These
    are discrete versions of the Laplacian operators on
    differential forms of a Riemannian manifold.  The interested
    reader is referred to \cite{Eckmann} and \cite{Dodziuk} for their
    origins in differential geometry and, e.g.,
    \cite{Denham,Friedman,Merris} for more recent appearances in
    combinatorics.}
on~$\Sigma$ by
\begin{align*}
\Lud_i&=\bd_{i+1}\cbd_{i+1}\colon C_i(\Sigma;R) \to C_i(\Sigma;R),\\
\Ldu_i&=\cbd_i\bd_i\colon C_i(\Sigma;R) \to C_i(\Sigma;R),\\
L^{\textrm{tot}}_i&= \Lud_i + \Ldu_i.
\end{align*}
Moreover, define the \emph{complexity} of~$\Sigma$ as
\begin{equation} \label{complexity}
\tau(\Sigma)=\tau_d(\Sigma)=\sum_{\text{CSFs }\Upsilon\subseteq \Sigma}|\tor(\HH_{d-1}(\Upsilon;\Zz))|^2.
\end{equation}
The \emph{cellular matrix-tree theorem}~\cite[Theorem~2.8]{Cellular}
states that if $\Sigma$ is $\Qq$-acyclic in codimension~one
and
$L_{\bar{\Upsilon}}$ is the submatrix of $\Lud_{d-1}(\Sigma)$ obtained by deleting the
rows and columns corresponding to the facets of a $(d-1)$-spanning tree
$\Upsilon$, then
\begin{equation*} \label{CMTT}
\tau(\Sigma)=\frac{|\tor(\HH_{d-2}(\Sigma;\Zz))|^2}{|\tor(\HH_{d-2}(\Upsilon;\Zz))|^2}\det L_{\bar{\Upsilon}}.
\end{equation*}
In Section~\ref{cellular-forest}, we will generalize this formula to
arbitrary cell complexes (i.e., not requiring that~$\Sigma$ be
$\Qq$-acyclic in codimension~one).  This has previously been done by
Lyons~\cite{Lyons} in terms of slightly different invariants.  If~$G$
is a connected graph, then $\tau(G)$ is just the number of spanning
trees, and we recover the classical matrix-tree theorem of Kirchhoff.

\subsection{Lattices} \label{lattice-subsection}

Starting in Section~\ref{integral-bases-section},
we will turn our attention to lattices
of integer cuts and flows.   We review
some of the general theory of integer lattices; see, e.g.,
\cite[Chapter~12]{Artin}, 
\cite[Chapter 14]{GodRoy},
\cite[Chapter IV]{Hungerford}.

A \emph{lattice} $\LL$ is a discrete subgroup of a finite-dimensional
vector space~$V$; that is, it is the set of integer linear combinations of some
basis of~$V$.  Every lattice $\LL\subseteq\Rr^n$ is isomorphic to
$\Zz^r$ for some integer $r\leq n$, called the \emph{rank} of~$\LL$.
The elements of $\LL$ span a vector space denoted by $\LL\otimes\Rr$.  
For $\LL\subseteq\Zz^n$, the \emph{saturation} of $\LL$ is defined 
as $\hat\LL=(\LL\otimes\Rr)\cap\Zz^n$.
An \emph{integral basis} of $\LL$ is a set of linearly independent
vectors $v_1,\dots,v_r\in\LL$ such that $\LL=\{c_1v_1+\cdots+c_rv_r
\st c_i\in\Zz\}$.
We will need the following fact about integral bases of lattices;
the equivalences are easy consequences of the theory of free modules
(see, e.g., \cite[Chapter~12]{Artin}, \cite[Chapter IV]{Hungerford}):
\begin{proposition}\label{TFAE-lattices}
For any lattice $\LL\subseteq\Zz^n$, the following are equivalent:
\begin{enumerate}
\item Every integral basis of $\LL$ can be extended to an integral basis of $\Zz^n$.
\item Some integral basis of $\LL$ can be extended to an integral basis of $\Zz^n$.
\item $\LL$ is a summand of $\Zz^n$, i.e., $\Zz^n$ can be written as an internal direct sum $\LL\oplus\LL'$.
\item $\LL$ is the kernel of some group homomorphism $\Zz^n\to\Zz^m$.
\item $\LL$ is saturated, i.e., $\LL=\hat\LL$.
\item $\Zz^n/\LL$ is a free $\Zz$-module, i.e., its torsion submodule is zero.
\end{enumerate}
\end{proposition}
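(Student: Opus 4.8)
The plan is to prove the chain of equivalences by a cycle of implications, relying throughout on the structure theory of finitely generated modules over the PID $\Zz$, together with the fact that a submodule of $\Zz^n$ is itself free of rank at most $n$. I would organize the argument around the quotient $\Zz^n/\LL$, since conditions (5) and (6) are manifestly statements about it, and conditions (1)--(3) translate readily into statements about how $\LL$ sits inside $\Zz^n$. A convenient route is $(6)\Rightarrow(5)\Rightarrow(3)\Rightarrow(1)\Rightarrow(2)\Rightarrow(4)\Rightarrow(6)$, though several of the individual arrows are nearly immediate and could be reorganized freely.

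For $(6)\Rightarrow(5)$: if $\Zz^n/\LL$ is torsion-free and $v\in\hat\LL$, then $kv\in\LL$ for some nonzero integer $k$, so the image of $v$ in $\Zz^n/\LL$ is killed by $k$, hence is zero, giving $v\in\LL$. For $(5)\Rightarrow(3)$: since $\LL$ is saturated, $\Zz^n/\LL$ is torsion-free and finitely generated, hence free, so the surjection $\Zz^n\to\Zz^n/\LL$ splits and $\Zz^n\isom\LL\oplus\LL'$ internally with $\LL'$ a complement. For $(3)\Rightarrow(1)$: given $\Zz^n=\LL\oplus\LL'$, any integral basis of $\LL$ together with any integral basis of $\LL'$ (which exists because $\LL'$, as a summand of $\Zz^n$, is free) forms an integral basis of $\Zz^n$. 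The implication $(1)\Rightarrow(2)$ is trivial once one observes that $\LL$ has at least one integral basis, which follows from $\LL$ being free. For $(2)\Rightarrow(4)$: extend an integral basis $v_1,\dots,v_r$ of $\LL$ to an integral basis $v_1,\dots,v_n$ of $\Zz^n$, and let $\phi\st\Zz^n\to\Zz^{n-r}$ be the homomorphism sending $v_i\mapsto 0$ for $i\le r$ and $v_i\mapsto e_{i-r}$ for $i>r$; then $\ker\phi=\LL$. Finally $(4)\Rightarrow(6)$: if $\LL=\ker(\psi\st\Zz^n\to\Zz^m)$, then $\Zz^n/\LL\isom\im\psi\subseteq\Zz^m$ is a submodule of a free module over a PID, hence free, in particular torsion-free.

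The only step with any real content is the splitting $(5)\Rightarrow(3)$, which rests on the structural fact that a finitely generated torsion-free module over a PID is free and that surjections onto free modules split; everything else is bookkeeping with bases. I would state this cleanly, cite \cite[Chapter~12]{Artin} or \cite[Chapter IV]{Hungerford}, and keep the rest of the write-up terse. The main (mild) obstacle is simply making sure the logical cycle is complete and that the existence of an integral basis of $\LL$ is invoked where needed rather than silently assumed; there is no deep difficulty, which is why the paper presents this as a ``fact'' rather than a theorem.
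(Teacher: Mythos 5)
Your proposal is correct: the cycle $(6)\Rightarrow(5)\Rightarrow(3)\Rightarrow(1)\Rightarrow(2)\Rightarrow(4)\Rightarrow(6)$ is complete and each arrow is valid, and it is exactly the standard free-module/PID argument that the paper does not write out but delegates to the cited references \cite[Chapter~12]{Artin}, \cite[Chapter IV]{Hungerford}. The only point worth an extra half-line in a full write-up is in $(6)\Rightarrow(5)$: the claim that $v\in\hat\LL$ gives $kv\in\LL$ for some nonzero integer $k$ uses the fact that the coordinates of $v$ with respect to an integral basis of $\LL$ are rational, since they solve an integer linear system with a unique real solution.
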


Fixing the standard inner product $\langle\cdot,\cdot\rangle$ on $\Rr^n$,
we define the \emph{dual lattice} of $\LL$ by
$$\LL^\sharp = \{v\in\LL\otimes\Rr \st \scp{v,w}\in\Zz \ \ \forall
w\in\LL\}.$$
Note that $\LL^\sharp$ can be identified with the dual
$\Zz$-module $\LL^*=\Hom(\LL,\Zz)$, and that $(\LL^\sharp)^\sharp =
\LL$.  A lattice is called \emph{integral} if it is contained in its
dual; for instance, any subgroup of $\Zz^n$ is an integral lattice.
The \emph{discriminant group} (or \emph{determinantal group})
of an integral lattice $\LL$ is $\LL^\sharp / \LL$; its cardinality
can be calculated as $\det M^TM$,
for any matrix~$M$ whose columns form an integral basis of $\LL$.
We will need the following facts about bases and duals of lattices.
\begin{proposition}\label{lattice-projection}\cite[Section~14.6]{GodRoy}
Let $M$ be an $n\times r$ integer matrix.
\begin{enumerate}
\item If the columns of $M$ form an integral basis for the lattice $\LL$, 
then the columns of $M(M^TM)^{-1}$ form the corresponding dual basis for $\LL^\sharp$.
\item The matrix $P=M(M^TM)^{-1}M^T$ represents orthogonal projection from $\Rr^n$ 
onto the column space of $M$.
\item If the greatest common divisor of the 
$r \times r$ minors of $M$ is 1, then $\LL^\sharp$ is generated by the columns of~$P$.
\end{enumerate}
\end{proposition}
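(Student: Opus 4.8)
The plan is to dispatch the three parts in order, each reducing to a short computation with the identity $M^TM(M^TM)^{-1}=I_r$, with only the last part requiring a genuine structural input.

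For (a), set $N=M(M^TM)^{-1}$. Since $(M^TM)^{-1}$ is an invertible $r\times r$ matrix, right multiplication by it preserves column space, so the columns of $N$ span the same space $\LL\otimes\Rr$ as the columns of $M$. The key identity is $M^TN=I_r$, which says precisely that $\langle m_i,n_j\rangle=\delta_{ij}$ for the columns $m_i$ of $M$ and $n_j$ of $N$; thus the $n_j$ form the dual basis to the $m_i$ in the inner-product sense. It then remains to check that the $\Zz$-span of the $n_j$ is all of $\LL^\sharp$: one inclusion follows from $\langle n_j,\sum_i b_im_i\rangle=b_j\in\Zz$, and for the other, any $v\in\LL^\sharp$ lies in $\colspace(M)=\colspace(N)$, so $v=\sum_j c_jn_j$ with real $c_j$, and pairing against $m_j\in\LL$ forces $c_j=\langle v,m_j\rangle\in\Zz$.

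For (b), note that $P=M(M^TM)^{-1}M^T$ is symmetric because $M^TM$ is, and idempotent because the inner $M^TM$ and $(M^TM)^{-1}$ cancel, giving $P^2=P$. Moreover $PM=M$, so $P$ fixes $\colspace(M)$ pointwise, while $M^Tx=0$ implies $Px=0$, so $P$ annihilates $\colspace(M)^\perp$. A symmetric idempotent with this image and kernel is exactly orthogonal projection onto $\colspace(M)$.

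For (c), write each column of $P=NM^T$ (with $N$ as in (a)) as $p_k=\sum_{j=1}^r M_{kj}\,n_j$, so every $p_k$ lies in $\LL^\sharp=\bigoplus_j\Zz n_j$, and in coordinates relative to the basis $\{n_j\}$ the vector $p_k$ is just the $k$th row of $M$. Hence the sublattice of $\LL^\sharp$ generated by the columns of $P$ corresponds, under the coordinate isomorphism $\LL^\sharp\isom\Zz^r$, to the lattice spanned by the rows of $M$ inside $\Zz^r$, and the claim becomes: the rows of $M$ span $\Zz^r$ if and only if the gcd of the $r\times r$ minors of $M$ is $1$. This is where I would invoke Smith normal form: writing $M=UDV$ with $U\in GL_n(\Zz)$, $V\in GL_r(\Zz)$, and $D$ the diagonal matrix of invariant factors $d_1\mid d_2\mid\cdots\mid d_r$, the row lattice of $M$ is carried by $V$ onto $d_1\Zz\times\cdots\times d_r\Zz$, which equals $\Zz^r$ exactly when every $d_i=1$; and the gcd of the maximal minors, being unchanged by the unimodular factors $U$ and $V$, equals $d_1\cdots d_r$, which is $1$ under the same condition. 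The only step with real content is this last equivalence; everything else is bookkeeping with $M^TN=I_r$. The one place to be careful is setting up the coordinate translation from columns of $P$ to rows of $M$ without an errant transpose; after that the Smith normal form argument is routine. (Alternatively, one may simply cite \cite[Section~14.6]{GodRoy} for all three parts.)
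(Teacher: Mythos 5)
Your proof is correct. Note that the paper itself gives no argument for this proposition: it is quoted as a known fact with a citation to \cite[Section~14.6]{GodRoy}, so there is no ``paper proof'' to match step for step; your write-up is a self-contained verification of the cited result. The route you take is the natural one: parts (a) and (b) reduce to the identity $M^T\bigl(M(M^TM)^{-1}\bigr)=I_r$ together with the standard characterization of orthogonal projection as a symmetric idempotent with prescribed image and kernel, and your two-inclusion argument that the columns of $M(M^TM)^{-1}$ actually $\Zz$-span $\LL^\sharp$ (not merely form a dual basis in the bilinear-form sense) is exactly the point that needs saying. For (c), translating the columns of $P=NM^T$ into the rows of $M$ in the coordinates of the dual basis, and then invoking the Smith normal form to identify ``rows of $M$ generate $\Zz^r$'' with ``gcd of the $r\times r$ minors is $1$,'' is the standard determinantal-divisor argument and is sound; the only implicit hypothesis, which the statement itself already requires for $(M^TM)^{-1}$ to exist, is that $M$ has full column rank, and your Smith-form step should read the invariant factors as $d_i=\pm1$ rather than $d_i=1$, a harmless sign detail. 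In short: correct, complete, and it supplies a proof where the paper only supplies a reference.
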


\subsection{The cellular matroid}  \label{matroid-subsection}
Many ideas of the paper may be expressed efficiently using
the language of matroids.  For a general reference on matroids, see, e.g., \cite{Oxley}.
We will primarily consider cellular matroids. The
\emph{cellular matroid} of~$\Sigma$ is the matroid $\MM(\Sigma)$ represented
over~$\Rr$ by the columns of the boundary matrix~$\bd$.  Thus the ground set of $\MM(\Sigma)$
naturally corresponds to the $d$-dimensional cells~$\Sigma_d$, and the matroid records which sets of
columns of~$\bd$ are linearly independent.  If~$\Sigma$ is a graph, then
$\MM(\Sigma)$ is its usual graphic matroid, while if $\Sigma$ is a simplicial
complex then $\MM(\Sigma)$ is its simplicial matroid (see~\cite{CorLin}).

The bases of $\MM(\Sigma)$ are the collections of facets of cellular
spanning forests of $\Sigma$. If $r$ is the rank function of the
matroid $\MM(\Sigma)$, then for each set of facets $B\subseteq \Sigma_d$,
we have $r(B) =
\rank \bd_B$, where $\bd_B$ is the submatrix consisting of the columns
indexed by the facets in~$B$. Moreover, we have
$$r(\Sigma) := r(\Sigma_d) = \rank \MM(\Sigma) = \rank \bd = |\Sigma_d|-\betti_d(\Sigma)$$
by the definition of Betti number.

A set of facets $B\subseteq \Sigma_d$ is called a \emph{cut} if deleting
$B$ from $\Sigma$ increases its codimension-one homology, i.e.,
$\betti_{d-1}(\Sigma\sm B)>\betti_{d-1}(\Sigma)$.  A cut $B$ is a \emph{bond} if
$r(\Sigma \sm B) = r(\Sigma)-1$, but $r((\Sigma \sm B) \cup\sigma)=r(\Sigma)$ for every
$\sigma\in B$.  That is, a bond is a minimal cut.  In matroid terminology, a bond
of $\Sigma$ is precisely a cocircuit of $\MM(\Sigma)$, i.e., a minimal set that
meets every basis of $\MM(\Sigma)$.  Equivalently, a bond is the complement
of a flat of rank $r(\Sigma)-1$.  If $\Upsilon$ is a cellular spanning forest (i.e., a basis of $\MM(\Sigma)$)
and $\sigma\in\Upsilon_d$ is a facet, then the \emph{fundamental bond}
of the pair $(\Upsilon,\sigma)$ is
\begin{equation} \label{fund-bond}
\bond(\Upsilon,\sigma) = \sigma \cup \left\{\rho\in \Sigma_d\sm\Upsilon\st \Upsilon\sm\sigma\cup\rho\text{ is a CSF}\right\}.
\end{equation}
This is the fundamental cocircuit of the pair $(\Upsilon,\sigma)$ of $\MM(\Sigma)$
\cite[p.~78]{Oxley}.

While the language of matroids will frequently be useful, it is
important to point out that most of the objects of interest to us, such as the
cut and flow lattices and the critical group of a cell complex~$\Sigma$, are \emph{not} purely combinatorial
invariants of its cellular matroid $\MM(\Sigma)$.  (See \cite{SW} for more on
this subject, and \cite{Moci1,Moci2} for generalizations of matroids that contain
finer arithmetic information).
As an example, the summands in \eqref{complexity} are indexed by the bases
of $\MM(\Sigma)$, but the summands themselves are not part of
the matroid data.  (On the other hand, when $\Sigma$ is a graph, all summands are 1.)

Below is a table collecting some of the
standard terminology from linear algebra, graph theory, and matroid
theory, along with the analogous concepts that we will be using for
cell complexes.

\begin{center}
\begin{scriptsize}
\begin{tabular}{|cccc|}\hline
\bf Linear algebra & \bf Graph & \bf Matroid & \bf Cell complex\\ \hline
Column vectors & Edges & Ground set & Facets\\
Independent set & Acyclic subgraph & Independent set & Acyclic subcomplex\\
Min linear dependence & Cycle & Circuit & Circuit\\
Basis & Spanning forest & Basis & CSF\\
Set meeting all bases & Disconnecting set & Codependent
set & Cut\\
Min set meeting all bases & Bond & Cocircuit & Bond\\
Rank & \# edges in spanning forest & Rank & \# facets in CSF\\\hline
\end{tabular}
\end{scriptsize}
\end{center}

Here ``codependent'' means dependent in the dual matroid.

\section{Enumerating Cellular Spanning Forests}\label{cellular-forest}

In this section, we study the enumerative properties of cellular spanning forests
of an arbitrary cell complex $\Sigma$.  Our setup is essentially the same as that
of Lyons \cite[\S6]{Lyons}, but the combinatorial formulas we will need later, namely Propositions~\ref{det-is-homology} and~\ref{relative-tor-formula}, are somewhat different.
As a corollary, we obtain an enumerative result, Proposition~\ref{CMFT}, which generalizes the simplicial and cellular matrix-tree theorems
of~\cite{Simplicial} and~\cite{Cellular} (in which we required that $\Sigma$ be $\Qq$-acyclic
in codimension~one).  The result is closely related, but not quite equivalent, to Lyons' generalization of the cellular matrix-tree theorem \cite[Corollary 6.2]{Lyons}, and to \cite[Corollary D]{Catanzaro}.

The arguments require some tools from homological algebra,
in particular the long exact sequence for relative homology and some facts
about the torsion-subgroup functor.  The details of the proofs are not
necessary to understand the constructions of cut and flow spaces in the
later sections.

Let $\Sigma$ be a $d$-dimensional cell complex with rank~$r$.
Let $\Gamma\subseteq\Sigma$ be a 
subcomplex of dimension less than or equal to $d-1$ such that  $\Gamma_{(d-2)}=\Sigma_{(d-2)}$.
Thus the inclusion map $i\colon \Gamma\to \Sigma$ induces isomorphisms 
$i_*\colon \HH_k(\Gamma;\Qq)\to\HH_k(\Sigma;\Qq)$ for all $k<d-2$.

\begin{definition} \label{rel-acyclic}
The subcomplex $\Gamma\subseteq\Sigma$ is called \emph{relatively acyclic} if in fact
the inclusion map 
$i\colon \Gamma\to \Sigma$ induces isomorphisms 
$i_*\colon \HH_k(\Gamma;\Qq)\to\HH_k(\Sigma;\Qq)$ for all $k<d.$
\end{definition}

By the long exact sequence for relative homology, $\Gamma$ is relatively acyclic if and only if
$\HH_d(\Sigma;\Qq)\to\HH_d(\Sigma,\Gamma;\Qq)$ is an isomorphism and $\HH_k(\Sigma,\Gamma;\Qq)=0$ for all $k<d$.
These conditions can occur only if $|\Gamma_{d-1}|=|\Sigma_{d-1}|-r$.  This quantity may be zero
(in which case the only relatively acyclic subcomplex is $\Sigma_{(d-2)}$).  A relatively acyclic subcomplex is precisely the complement of a $(d-1)$-cobase (a basis of the matroid represented over~$\Rr$ by the rows of the boundary matrix~$\bd$)
in the terminology of Lyons \cite{Lyons}.

Two special cases are worth noting.  First, if $d=1$, then a relatively acyclic
complex consists of one vertex in each connected component.
Second, if $\HH_{d-1}(\Sigma;\Qq)=0$, then $\Gamma$ is relatively acyclic if and only if
it is a cellular spanning forest of $\Sigma_{(d-1)}$.

For a matrix $M$, we write $M_{A,B}$ for the restriction of
$M$ to rows indexed by $A$ and columns indexed by $B$. 

\begin{proposition} \label{det-is-homology}
Let $\Gamma\subseteq\Upsilon\subseteq\Sigma$ be subcomplexes
such that $\dim\Upsilon=d$; $\dim\Gamma=d-1$; $|\Upsilon_d|=r$;
$|\Gamma_{d-1}|=|\Sigma_{d-1}|-r$;
$\Upsilon_{(d-1)}=\Sigma_{(d-1)}$; and $\Gamma_{(d-2)}=\Sigma_{(d-2)}$.
Also, let $R=\Sigma_{d-1} \sm \Gamma$.  Then the following are equivalent:
\begin{enumerate}
\item The $r\x r$ square matrix $\hat\bd=\bd_{R,\Upsilon}$ is nonsingular.
\item $\HH_d(\Upsilon,\Gamma;\Qq)=0$.
\item $\HH_{d-1}(\Upsilon,\Gamma;\Qq)=0$.
\item $\Upsilon$ is a cellular spanning forest of $\Sigma$ and $\Gamma$ is relatively acyclic.
\end{enumerate}
\end{proposition}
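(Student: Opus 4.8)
The plan is to establish the cycle of implications (1)$\Rightarrow$(2)$\Rightarrow$(3)$\Rightarrow$(4)$\Rightarrow$(1), using the long exact sequence of the pair $(\Upsilon,\Gamma)$ together with dimension counts. First observe that because $\Gamma_{(d-2)}=\Sigma_{(d-2)}=\Upsilon_{(d-2)}$, the relative chain complex $C_\bullet(\Upsilon,\Gamma;\Qq)$ is concentrated in degrees $d$ and $d-1$: we have $C_d(\Upsilon,\Gamma)=C_d(\Upsilon)=\Qq^{\Upsilon_d}$ of dimension $r$, and $C_{d-1}(\Upsilon,\Gamma)=C_{d-1}(\Upsilon)/C_{d-1}(\Gamma)=\Qq^R$ of dimension $|\Sigma_{d-1}|-(|\Sigma_{d-1}|-r)=r$, with all lower relative chain groups vanishing. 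The relative boundary map $C_d(\Upsilon,\Gamma)\to C_{d-1}(\Upsilon,\Gamma)$ is exactly the matrix $\hat\bd=\bd_{R,\Upsilon}$ obtained by restricting $\bd$ to rows in $R$ and columns in $\Upsilon_d$. Thus $\HH_d(\Upsilon,\Gamma;\Qq)=\ker\hat\bd$ and $\HH_{d-1}(\Upsilon,\Gamma;\Qq)=\coker\hat\bd$, both of which are spaces we can control by the rank of the $r\times r$ matrix $\hat\bd$.

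Given this, the equivalence of (1), (2), and (3) is essentially linear algebra: since $\hat\bd$ is square of size $r$, it is nonsingular if and only if $\ker\hat\bd=0$, if and only if $\coker\hat\bd=0$, and (crucially) since $\dim\ker\hat\bd=\dim\coker\hat\bd=r-\rank\hat\bd$, either one vanishing forces the other to vanish. This disposes of (1)$\Leftrightarrow$(2)$\Leftrightarrow$(3) in one stroke. For the remaining link to (4), I would use the long exact sequence of $(\Sigma,\Gamma)$ and of $(\Sigma,\Upsilon)$ and compare. Relative acyclicity of $\Gamma$ (Definition~\ref{rel-acyclic}) says $\HH_k(\Gamma;\Qq)\to\HH_k(\Sigma;\Qq)$ is an isomorphism for all $k<d$, equivalently $\HH_k(\Sigma,\Gamma;\Qq)=0$ for $k<d$ and $\HH_d(\Sigma;\Qq)\xrightarrow{\sim}\HH_d(\Sigma,\Gamma;\Qq)$; that $\Upsilon$ is a CSF means (by the column-basis characterization) that the columns of $\bd$ indexed by $\Upsilon_d$ are a basis of the column space, equivalently $\HH_d(\Upsilon;\Qq)=0$ and $\rank\HH_{d-1}(\Upsilon;\Qq)=\rank\HH_{d-1}(\Sigma;\Qq)$.

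The key computation for (2)$\Leftrightarrow$(4) is a diagram chase in the commutative ladder relating the long exact sequences of $(\Upsilon,\Gamma)$, $(\Sigma,\Gamma)$, and $(\Sigma,\Upsilon)$ (all with $\Qq$ coefficients), exploiting that $C_\bullet(\Sigma,\Upsilon)$ is concentrated in degree $d$ so that $\HH_d(\Sigma,\Upsilon;\Qq)=\Qq^{\Sigma_d\sm\Upsilon_d}$ has dimension $|\Sigma_d|-r=\betti_d(\Sigma)$ and all other $\HH_k(\Sigma,\Upsilon;\Qq)$ vanish. From the pair $(\Sigma,\Upsilon)$ one reads off $\HH_k(\Upsilon;\Qq)\cong\HH_k(\Sigma;\Qq)$ for $k<d$ and an exact sequence $0\to\HH_d(\Upsilon;\Qq)\to\HH_d(\Sigma;\Qq)\to\Qq^{\betti_d}\to\HH_{d-1}(\Upsilon;\Qq)\to\HH_{d-1}(\Sigma;\Qq)\to0$, which via the dimension count \eqref{count-condn} is equivalent to $\Upsilon$ being a CSF. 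Then splicing with the $(\Upsilon,\Gamma)$ sequence and the $(\Sigma,\Gamma)$ sequence, the condition that both $\HH_d(\Upsilon,\Gamma;\Qq)$ and $\HH_{d-1}(\Upsilon,\Gamma;\Qq)$ vanish becomes equivalent, after comparing ranks degree by degree, to $\Gamma$ being relatively acyclic and $\Upsilon$ being a CSF simultaneously. I expect the main obstacle to be precisely this bookkeeping: keeping track of all the Betti numbers and verifying that the rank equalities match up exactly, so that "$\Upsilon$ a CSF" and "$\Gamma$ relatively acyclic" are not independently extractable but must be proved together from the vanishing of the two relative groups. One clean way to organize it is to note $\betti_{d-1}(\Sigma,\Gamma)-\betti_d(\Sigma,\Gamma)+r-r=0$ forces the Euler-characteristic identity that ties the two vanishings to the global Betti numbers, and then the CSF count and relative-acyclicity conditions are the only way to realize both relative groups as zero.
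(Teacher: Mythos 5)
Your identification of the relative chain complex of $(\Upsilon,\Gamma)$ --- two copies of $\Qq^r$ in degrees $d$ and $d-1$ with boundary map $\hat\bd$ --- and the resulting equivalence of (a), (b), (c) is correct, and is exactly the paper's argument. The gap is the equivalence with (d): you do not actually prove it. You set up a ladder of three long exact sequences and then assert that the conclusion should follow ``after comparing ranks degree by degree,'' explicitly flagging that bookkeeping as an unresolved obstacle; and the closing Euler-characteristic remark only restates that $\HH_d(\Upsilon,\Gamma;\Qq)$ and $\HH_{d-1}(\Upsilon,\Gamma;\Qq)$ have equal dimension, which is (b)$\Leftrightarrow$(c) again, not the link to (d). Moreover, the one concrete input you cite from the pair $(\Sigma,\Upsilon)$ is wrong as stated: $\HH_k(\Upsilon;\Qq)\to\HH_k(\Sigma;\Qq)$ is an isomorphism only for $k\le d-2$. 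In degree $d-1$ the long exact sequence gives only surjectivity (there are no relative $(d-1)$-cells), and injectivity there is equivalent to the rank condition \eqref{conn-condn}; if the degree-$(d-1)$ isomorphism were automatic, your own five-term sequence would force $\HH_d(\Upsilon;\Qq)=0$ for \emph{every} $r$-element set of facets, i.e., every such set would be a CSF, which is false.

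The missing step is in fact simpler than the three-pair chase you envision and needs only the pair $(\Upsilon,\Gamma)$. Since $\HH_d(\Gamma;\Qq)=0$ and $\HH_{d-2}(\Upsilon,\Gamma;\Qq)=0$, its long exact sequence reads
\[
0\to\HH_d(\Upsilon;\Qq)\to\HH_d(\Upsilon,\Gamma;\Qq)\to\HH_{d-1}(\Gamma;\Qq)\to\HH_{d-1}(\Upsilon;\Qq)\to\HH_{d-1}(\Upsilon,\Gamma;\Qq)\to\HH_{d-2}(\Gamma;\Qq)\to\HH_{d-2}(\Upsilon;\Qq)\to 0.
\]
If the two relative groups vanish, then $\HH_d(\Upsilon;\Qq)=0$, which together with $|\Upsilon_d|=r=\rank\bd$ says the columns of $\bd$ indexed by $\Upsilon_d$ form a column basis, i.e., $\Upsilon$ is a CSF; and the sequence collapses into isomorphisms $\HH_{d-1}(\Gamma;\Qq)\isom\HH_{d-1}(\Upsilon;\Qq)$ and $\HH_{d-2}(\Gamma;\Qq)\isom\HH_{d-2}(\Upsilon;\Qq)$. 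Composing with $\HH_{d-1}(\Upsilon;\Qq)\isom\HH_{d-1}(\Sigma;\Qq)$ (surjective because $(\Sigma,\Upsilon)$ has no relative $(d-1)$-cells, injective because the ranks agree by \eqref{conn-condn} once $\Upsilon$ is a CSF), with $\HH_{d-2}(\Upsilon;\Qq)\isom\HH_{d-2}(\Sigma;\Qq)$ (from $\Upsilon_{(d-1)}=\Sigma_{(d-1)}$), and with the automatic isomorphisms in degrees at most $d-3$ (from $\Gamma_{(d-2)}=\Sigma_{(d-2)}$), you conclude exactly that $\Gamma$ is relatively acyclic in the sense of Definition~\ref{rel-acyclic}; and every step reverses to give (d)$\Rightarrow$(b). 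Some such explicit argument must replace the deferred ``bookkeeping'' before the proposal counts as a proof.
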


\begin{proof}
The cellular chain complex of the relative complex $(\Upsilon,\Gamma)$ is
$$0 \to C_d(\Upsilon,\Gamma;\Qq)=\Qq^r \xrightarrow{\hat\bd} C_{d-1}(\Upsilon,\Gamma;\Qq)=\Qq^r \to 0$$
with other terms zero.  If $\hat\bd$ is nonsingular, then $\HH_d(\Upsilon,\Gamma;\Qq)$ and $\HH_{d-1}(\Upsilon,\Gamma;\Qq)$ are both zero;
otherwise, both are nonzero.  This proves the equivalence of (a), (b) and (c).

Next, note that $\HH_d(\Gamma;\Qq)=0$ (because $\Gamma$ has no cells in dimension~$d$) and that
$\HH_{d-2}(\Upsilon,\Gamma;\Qq)=0$ (because $\Gamma_{(d-2)}=\Upsilon_{(d-2)}$). 
Accordingly, the long exact sequence for relative homology of $(\Upsilon,\Gamma)$ is
\begin{equation} \label{LES-relative}
\begin{aligned}
0 &\to \HH_d(\Upsilon;\Qq) \to \HH_d(\Upsilon,\Gamma;\Qq)\\
 &\to \HH_{d-1}(\Gamma;\Qq) \to \HH_{d-1}(\Upsilon;\Qq) \to \HH_{d-1}(\Upsilon,\Gamma;\Qq)\\
 &\to \HH_{d-2}(\Gamma;\Qq) \to \HH_{d-2}(\Upsilon;\Qq) \to 0.
\end{aligned}
\end{equation}
If $\HH_d(\Upsilon,\Gamma;\Qq)=\HH_{d-1}(\Upsilon,\Gamma;\Qq)=0$, then
$\HH_d(\Upsilon;\Qq)=0$ (which says that~$\Upsilon$ is a cellular spanning forest) and the rest of~\eqref{LES-relative}
splits into two isomorphisms that assert precisely that~$\Gamma$ is relatively acyclic (recall that $\HH_{d-1}(\Upsilon;\Qq)=\HH_{d-1}(\Sigma;\Qq)$
when~$\Upsilon$ is a cellular spanning forest).  This implication is reversible,
completing the proof.
\end{proof}

The \emph{torsion subgroup} of a finitely generated abelian group $A$ is
defined as the subgroup
$$\tor(A)=\{x\in A \st kx=0\text{ for some } k\in\Zz\}.$$
Note that $A=\tor(A)$ if and only if $A$ is finite.  The torsion functor
$\tor$
is left-exact~\cite[ p.~179]{Hungerford}.
Moreover, if $A\to B\to C\to 0$ is exact and $A=\tor(A)$, then
$\tor(A) \to \tor(B) \to \tor(C) \to 0$ is exact.  We will need the following additional fact about the torsion functor.

\begin{lemma} \label{exactness-of-tor}
Suppose we have a commutative diagram of finitely generated abelian groups
\begin{equation} \label{first-comm-diag}
\xymatrix{
0\rto& A\rto^f\dto_\alpha& B\rto^g\dto^\beta& C\rto^h\dto& D\rto^j\dto& E\rto\dto& 0\\
0\rto& A'\rto^{f'}& B'\rto^{g'}& C'\rto^{h'}& D'\rto^{j'}& E'\rto& 0
}
\end{equation}
such that both rows are exact;  $A,A'$ are free; $\alpha$ is an isomorphism; $\beta$ is surjective; and $C,C'$ are finite.
Then there is an induced commutative diagram
\begin{equation} \label{induced-comm-diag}
\xymatrix @C=0.2in @R=0.2in{
0\rto& \tor B\oplus G\rto\dto&\tor C\rto\dto& \tor D\rto\dto& \tor E\rto\dto& 0\\
0\rto& \tor B'\oplus G\rto& \tor C'\rto& \tor D'\rto& \tor E'\rto& 0
}
\end{equation}
such that $G$ is finite and both rows are exact.  Consequently
\begin{equation} \label{card-formula}
|\tor B|\cdot |\tor C'|\cdot |\tor D| \cdot |\tor E'|=|\tor B'|\cdot |\tor C|\cdot|\tor D'|\cdot |\tor E|.
\end{equation}
\end{lemma}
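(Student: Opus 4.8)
The plan is to collapse each six-term row of \eqref{first-comm-diag} onto a four-term exact sequence of torsion subgroups, keep track of the left-hand term, and then extract \eqref{card-formula} by an Euler-characteristic-style count of orders.

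First I would treat the top row $0 \to A \xrightarrow{f} B \xrightarrow{g} C \xrightarrow{h} D \xrightarrow{j} E \to 0$ in isolation. Put $K = \im g = \ker h \subseteq C$; since $C$ is finite so is $K$, and $C/K \cong \ker j \subseteq D$ is finite as well. Breaking the row at $K$ gives $0 \to K \to C \to C/K \to 0$ together with $0 \to C/K \to D \xrightarrow{j} E \to 0$. Applying left-exactness of $\tor$ to the second sequence, together with the quoted right-exactness of $\tor$ on $C/K \to D \to E \to 0$ (legitimate since $C/K$ equals its own torsion), yields $0 \to C/K \to \tor D \xrightarrow{j} \tor E \to 0$; splicing with $0 \to K \to \tor C \to C/K \to 0$ produces an exact sequence $0 \to K \to \tor C \to \tor D \to \tor E \to 0$. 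The identical argument on the bottom row gives $0 \to K' \to \tor C' \to \tor D' \to \tor E' \to 0$ with $K' = \im g'$, and because the vertical maps of \eqref{first-comm-diag} restrict to torsion and carry $K$ into $K'$, the two four-term sequences sit in a commutative ladder — this is the diagram \eqref{induced-comm-diag}.

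The substantive step is the analysis of the left-hand term $K$ via the short exact sequence $0 \to A \xrightarrow{f} B \xrightarrow{g} K \to 0$. Since $A$ is free, $f(A) \cap \tor B = 0$, so $\tor B$ embeds in $K$ with cokernel $G := B/(f(A) + \tor B) \cong (B/\tor B)/\overline{f(A)}$; the latter is a free abelian group of rank $\rank A$ modulo a free subgroup of the same rank (using that $K \subseteq C$ is finite, so $\rank B = \rank A$), hence finite, and we obtain $0 \to \tor B \to K \to G \to 0$, which is what the summand notation ``$\tor B \oplus G$'' in the statement records (only the factorization $|K| = |\tor B|\cdot|G|$ will matter below). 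The crux is that $G$ does not depend on the row: $\beta$ induces a surjection $\bar\beta\colon B/\tor B \to B'/\tor B'$ of free abelian groups which have the same rank, because $\rank B = \rank A = \rank A' = \rank B'$ (finiteness of $B/f(A)$ and $B'/f'(A')$, and $\alpha$ an isomorphism); hence $\bar\beta$ is an isomorphism, and since $\beta f = f'\alpha$ with $\alpha$ surjective it carries $\overline{f(A)}$ onto $\overline{f'(A')}$, so $G \cong G'$.

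Finally, to get \eqref{card-formula} I would use that in a finite exact sequence the alternating product of the orders of the terms is $1$: the top four-term sequence gives $|\tor B|\,|G|\,|\tor D| = |K|\,|\tor D| = |\tor C|\,|\tor E|$ and the bottom one $|\tor B'|\,|G|\,|\tor D'| = |\tor C'|\,|\tor E'|$; dividing the two identities removes $|G|$ and, after clearing denominators, rearranges to the claimed equality. I expect the main obstacle to be the third step — checking that the error group $G$ produced from the two rows is genuinely the same — which is precisely where the hypotheses ``$\alpha$ an isomorphism'' and ``$\beta$ surjective'' (rather than weaker finiteness conditions) are used; the rest is routine diagram-chasing with the left- and right-exactness properties of $\tor$.
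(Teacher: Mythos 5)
Your proof is correct, and it reaches \eqref{card-formula} by a genuinely different route from the paper's. The paper argues by choosing splittings: it first replaces $D,E$ (and $D',E'$) by their torsion summands via a maximal free summand of $D$, then writes $B=\tor B\oplus F$ with $f(A)\subseteq F$ and $B'=\tor B'\oplus F'$ with $f'(A')\subseteq F'$, and lets $\beta$ identify $G=F/f(A)$ with $F'/f'(A')$. You avoid all splittings: you break each row at $K=\im g\subseteq C$, splice $0\to K\to\tor C\to\tor D\to\tor E\to 0$ from the left- and right-exactness properties of $\tor$, record $|K|=|\tor B|\cdot|G|$ via the extension $0\to\tor B\to K\to G\to 0$ with $G=B/(f(A)+\tor B)$, and prove $G\isom G'$ because $\beta$ induces a surjection, hence an isomorphism, of the equal-rank free groups $B/\tor B\to B'/\tor B'$ carrying $\overline{f(A)}$ onto $\overline{f'(A')}$. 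Your route buys some real robustness: a free summand $F$ of $B$ complementary to $\tor B$ and containing $f(A)$ need not exist under the stated hypotheses (take $B=\Zz/2\oplus\Zz$ and $f(A)$ generated by $(\bar1,2)$, with $C=B/f(A)\isom\Zz/4$ and $D=E=0$; then $K\isom\Zz/4$ is not $\tor B\oplus G$), so the leftmost entries of \eqref{induced-comm-diag} are in general only extensions of $G$ by $\tor B$ and $\tor B'$ rather than direct sums --- exactly the weakening you flag, and all that is needed, since only the cardinality identity \eqref{card-formula} is used later in the paper. What the paper's approach buys, when the splittings do exist, is the literal direct-sum form of the induced diagram; for the counting argument the two formulations are interchangeable, and your ladder with first column $K\to K'$ together with $|G|=|G'|$ yields \eqref{card-formula} by the same alternating-product computation.
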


\begin{proof}
%
Since $C$ is finite, we have $\ker j=\im h\subseteq\tor D$,
so replacing $D,E$ with their torsion summands
preserves exactness.  The same argument implies that we can replace $D',E'$ with $\tor D',\tor E'$.

Second, note that $A,A',B,B'$ all have the same rank (since the rows are exact, $C,C'$ are finite,
and $\alpha$ is an isomorphism).  Hence $f(A)$ is a maximal-rank
free submodule of $B$; we can write $B=\tor B\oplus F$, where
$F$ is a free summand of $B$ containing $f(A)$.  Likewise,
write $B'=\tor B'\oplus F'$, where
$F'$ is a free summand of $B'$ containing $f'(A')$.  Meanwhile, $\beta$ is surjective, hence
must restrict to an isomorphism $F\to F'$,
which induces an isomorphism $F/f(A)\to F'/f'(A')$.  Abbreviating this last group by $G$, we obtain
the desired diagram~\eqref{induced-comm-diag} .  Since $\ker g=\im f\subseteq F$, the map $g\colon\tor B\oplus G\to \tor C$
is injective, proving exactness of the first row; the second row is exact by the same argument.
Exactness of each row implies that the alternating product of the cardinalities of the groups is 1,
from which the formula~\eqref{card-formula} follows.
\end{proof}

\begin{proposition} \label{relative-tor-formula}
Let $\Sigma$ be a $d$-dimensional cell complex,
let $\Upsilon\subseteq\Sigma$ be a cellular spanning forest, and let $\Gamma\subseteq\Sigma$ be a relatively acyclic $(d-1)$-subcomplex.
Then
$$\torh_{d-1}(\Upsilon)\, \torh_{d-1}(\Sigma,\Gamma)=\torh_{d-1}(\Sigma)\, \torh_{d-1}(\Upsilon,\Gamma).$$
\end{proposition}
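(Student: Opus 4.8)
The plan is to apply Lemma~\ref{exactness-of-tor} to a commutative diagram built from the long exact sequences in homology associated with the inclusions $\Gamma \hookrightarrow \Upsilon$ and $\Gamma \hookrightarrow \Sigma$. First I would set up the morphism of pairs $(\Upsilon, \Gamma) \to (\Sigma, \Gamma)$ induced by the inclusion $\Upsilon \hookrightarrow \Sigma$; this gives a map between the two long exact sequences for relative homology with $\Zz$ coefficients. The relevant segments involve the terms $\HH_{d-1}(\Gamma)$, $\HH_{d-1}(\Upsilon)$, $\HH_{d-1}(\Upsilon,\Gamma)$, $\HH_{d-2}(\Gamma)$ in the top row and $\HH_{d-1}(\Gamma)$, $\HH_{d-1}(\Sigma)$, $\HH_{d-1}(\Sigma,\Gamma)$, $\HH_{d-2}(\Gamma)$ in the bottom row, with identity maps on the $\Gamma$ terms. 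The key point is that the left end behaves well: since $\Upsilon$ is a CSF, $\HH_d(\Upsilon;\Zz)=0$, so the sequence for $(\Upsilon,\Gamma)$ starts $0 \to \HH_d(\Upsilon,\Gamma;\Zz) \to \HH_{d-1}(\Gamma;\Zz) \to \cdots$, and because $\Gamma$ is relatively acyclic, $\HH_d(\Sigma;\Qq) \to \HH_d(\Sigma,\Gamma;\Qq)$ is an isomorphism and the lower $\HH_k(\Sigma,\Gamma;\Qq)$ vanish for $k<d$.

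Next I would arrange the six-term pattern required by Lemma~\ref{exactness-of-tor}. Reading off Proposition~\ref{det-is-homology}: $\HH_d(\Upsilon,\Gamma;\Qq)=\HH_{d-1}(\Upsilon,\Gamma;\Qq)=0$, so $\HH_d(\Upsilon,\Gamma;\Zz)$ and $\HH_{d-1}(\Upsilon,\Gamma;\Zz)$ are finite (all torsion). I would take the diagram to have, in the top row, $A = \HH_d(\Upsilon,\Gamma;\Zz)$ (finite), then $\HH_{d-1}(\Gamma;\Zz)$, $\HH_{d-1}(\Upsilon;\Zz)$, $\HH_{d-1}(\Upsilon,\Gamma;\Zz)$, $\HH_{d-2}(\Gamma;\Zz)$, and a terminal $0$ (using $\HH_{d-2}(\Upsilon,\Gamma;\Zz)=0$ since $\Upsilon_{(d-2)}=\Gamma_{(d-2)}$); in the bottom row, the corresponding terms for $(\Sigma,\Gamma)$. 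The vertical maps at the first and last nontrivial spots ($\HH_{d-1}(\Gamma)$ and $\HH_{d-2}(\Gamma)$) are identities, hence isomorphisms; the hypotheses of the Lemma ask that the leftmost vertical map be an isomorphism, the second be surjective, and the middle ``$C$'' terms be finite. I would need to check that the relevant entries line up with the Lemma's shape — in particular that $\HH_{d-1}(\Upsilon;\Zz)$ surjects appropriately and that the finite ``$C,C'$'' slot is filled by $\HH_{d-1}(\Upsilon,\Gamma;\Zz)$ and $\HH_{d-1}(\Sigma,\Gamma;\Zz)$, which are finite by Proposition~\ref{det-is-homology} and relative acyclicity respectively. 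Then the cardinality formula~\eqref{card-formula} from the Lemma reads
$$|\tor \HH_{d-1}(\Gamma)| \cdot |\tor \HH_{d-1}(\Sigma,\Gamma)| \cdot |\tor \HH_{d-2}(\Gamma)| \cdot (\text{trivial term}) = |\tor \HH_{d-1}(\Gamma)| \cdot |\tor \HH_{d-1}(\Upsilon,\Gamma)| \cdot |\tor \HH_{d-2}(\Gamma)| \cdot (\text{trivial term}),$$
which after cancelling the $\Gamma$ contributions yields the claimed identity $\torh_{d-1}(\Upsilon)\,\torh_{d-1}(\Sigma,\Gamma)=\torh_{d-1}(\Sigma)\,\torh_{d-1}(\Upsilon,\Gamma)$ — once the $\HH_{d-1}(\Upsilon)$ and $\HH_{d-1}(\Sigma)$ terms are correctly placed as the ``$B,B'$'' entries whose torsion is what we want to compare.

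The main obstacle is verifying that the diagram genuinely satisfies the precise hypotheses of Lemma~\ref{exactness-of-tor}: the Lemma is stated for a specific five-column shape with freeness of $A,A'$, an isomorphism $\alpha$, a surjection $\beta$, and finiteness of the third column, whereas our naturally occurring sequence has $A=\HH_d(\Upsilon,\Gamma;\Zz)$ finite rather than free, and the columns do not obviously match up. I expect I will need to truncate and reindex the long exact sequences carefully — possibly replacing $A$ by the image of $\HH_d(\Upsilon;\Qq)$-free part, or restricting attention to the segment starting at $\HH_{d-1}(\Gamma;\Zz)$ where the leftmost vertical map is an honest isomorphism — so that the hypotheses are met exactly. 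Getting the bookkeeping of which group plays the role of $B$ versus $C$ right, and confirming that the ``extra'' torsion group $G$ and the trivial end terms cancel, is the delicate part; everything else (the existence of the map of pairs, left-exactness of $\tor$, finiteness claims from Proposition~\ref{det-is-homology}) is routine.
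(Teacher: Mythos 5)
Your overall strategy is the paper's: map the long exact sequence of the pair $(\Upsilon,\Gamma)$ to that of $(\Sigma,\Gamma)$ via the inclusion $\Upsilon\subseteq\Sigma$ and feed the resulting commutative diagram into Lemma~\ref{exactness-of-tor}. But the decisive verifications are exactly what you defer as an ``obstacle,'' and the specific choices you do make would fail. The correct truncation is forced by two observations missing from your write-up: (i) $\HH_d(\Upsilon,\Gamma;\Zz)$ is \emph{free} (the relative complex has no cells above dimension $d$) and is all torsion by Proposition~\ref{det-is-homology}, hence is zero, so the top row genuinely begins $0\to\HH_{d-1}(\Gamma;\Zz)\to\HH_{d-1}(\Upsilon;\Zz)$; and (ii) on the bottom row, $i_*\colon\HH_{d-1}(\Gamma;\Zz)\to\HH_{d-1}(\Sigma;\Zz)$ is injective over $\Zz$ because it is a $\Qq$-isomorphism (relative acyclicity) and $\HH_{d-1}(\Gamma;\Zz)$ is free, $\Gamma$ being $(d-1)$-dimensional. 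This makes $A=A'=\HH_{d-1}(\Gamma;\Zz)$, which is free as the lemma requires, with $B,B'=\HH_{d-1}(\Upsilon),\HH_{d-1}(\Sigma)$ and $C,C'$ the finite groups $\HH_{d-1}(\Upsilon,\Gamma),\HH_{d-1}(\Sigma,\Gamma)$. Your primary proposal, taking $A=\HH_d(\Upsilon,\Gamma;\Zz)$ ``finite,'' violates the freeness hypothesis; your fallback of starting at $\HH_{d-1}(\Gamma)$ is the right move but needs (i) and (ii) to know the truncated rows are still exact at the left, and that justification is the content you omit.

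Two further steps as stated would fail. You end the rows with $\HH_{d-2}(\Gamma;\Zz)\to 0$, but $\HH_{d-2}(\Upsilon,\Gamma;\Zz)=0$ gives surjectivity of $\HH_{d-2}(\Gamma)\to\HH_{d-2}(\Upsilon)$, not of $\HH_{d-1}(\Upsilon,\Gamma)\to\HH_{d-2}(\Gamma)$; the rows must continue to $E=\HH_{d-2}(\Upsilon;\Zz)$ and $E'=\HH_{d-2}(\Sigma;\Zz)$, which are isomorphic since $\Upsilon_{(d-1)}=\Sigma_{(d-1)}$ and which cancel along with $D=D'=\HH_{d-2}(\Gamma)$. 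Relatedly, the cardinality identity you display contains neither $\torh_{d-1}(\Upsilon)$ nor $\torh_{d-1}(\Sigma)$, so it is vacuous and cannot yield the proposition; with the correct slots, equation~\eqref{card-formula} reads $\torh_{d-1}(\Upsilon)\,\torh_{d-1}(\Sigma,\Gamma)\,\torh_{d-2}(\Gamma)\,\torh_{d-2}(\Sigma)=\torh_{d-1}(\Sigma)\,\torh_{d-1}(\Upsilon,\Gamma)\,\torh_{d-2}(\Gamma)\,\torh_{d-2}(\Upsilon)$, and the last two factors on each side cancel. Finally, the surjectivity of $\beta\colon\HH_{d-1}(\Upsilon;\Zz)\to\HH_{d-1}(\Sigma;\Zz)$, which you only promise to check, follows because the pair $(\Sigma,\Upsilon)$ has no cells in dimension $d-1$, so $\HH_{d-1}(\Sigma,\Upsilon;\Zz)=0$. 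These points are where the proof actually lives; as written, your argument is a plan rather than a proof.
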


\begin{proof}
The inclusion $\Upsilon\subseteq\Sigma$ induces a commutative diagram
$$
\xymatrix@C=0.15in{
0 \rto& \HH_{d-1}(\Gamma;\Zz) \rto^{i_*}\dto^{\isom}& \HH_{d-1}(\Upsilon;\Zz)\rto^{j_*}\dto& \HH_{d-1}(\Upsilon,\Gamma;\Zz) \rto\dto& \HH_{d-2}(\Gamma;\Zz) \rto\dto^{\isom}& \HH_{d-2}(\Upsilon;\Zz) \rto\dto^{\isom}& 0\\
0 \rto& \HH_{d-1}(\Gamma;\Zz) \rto^{i_*}& \HH_{d-1}(\Sigma;\Zz) \rto^{j_*} & \HH_{d-1}(\Sigma,\Gamma;\Zz) \rto& \HH_{d-2}(\Gamma;\Zz) \rto& \HH_{d-2}(\Sigma;\Zz) \rto& 0
}$$
whose rows come from the long exact sequences
for relative homology.  (For the top row,
the group $\HH_d(\Upsilon,\Gamma;\Zz)$ is free because $\dim \Upsilon = d$,
and on the other hand is purely torsion by Proposition~\ref{det-is-homology},
so it must be zero.  For the bottom row, 
the condition that $\Gamma$ is relatively
acyclic implies that $i_*$ is an isomorphism over $\Qq$; therefore,
it is one-to-one over $\Zz$.)
The groups $\HH_{d-1}(\Upsilon,\Gamma;\Zz)$ and
$\HH_{d-1}(\Sigma,\Gamma;\Zz)$ are purely torsion.
The first, fourth and fifth vertical maps are isomorphisms (the last
because $\Upsilon_{(d-1)}=\Sigma_{(d-1)}$) and the second is a surjection by the relative homology sequence of
the pair $(\Sigma,\Upsilon)$ (since the relative complex has no cells in dimension $d-1$).
The result now follows by applying Lemma~\ref{exactness-of-tor} and canceling like terms.
\end{proof}

As a consequence, we obtain a version of the cellular matrix-forest
theorem that applies to all cell complexes (not only those that are
$\Qq$-acyclic in codimension~one).

\begin{proposition} \label{CMFT}
Let $\Sigma$ be a $d$-dimensional cell complex and
let $\Gamma\subseteq\Sigma$ be a relatively acyclic $(d-1)$-dimensional subcomplex, and let
$L_\Gamma$ be the restriction of $\Lud_{d-1}(\Sigma)$ to the $(d-1)$-cells of~$\Gamma$.  Then
$$\tau_d(\Sigma) = \frac{\torh_{d-1}(\Sigma)^2}{\torh_{d-1}(\Sigma,\Gamma)^2}\det L_\Gamma.$$
\end{proposition}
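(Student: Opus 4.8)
The plan is to expand $\det L_\Gamma$ by the Cauchy--Binet formula and then read off each resulting term from Propositions~\ref{det-is-homology} and~\ref{relative-tor-formula}. Write $\bd=\bd_d$ and, as in Proposition~\ref{det-is-homology}, let $R=\Sigma_{d-1}\sm\Gamma$; since $\Gamma$ is relatively acyclic we have $|R|=r$. As $\cbd$ is the transpose of $\bd$, the operator $\Lud_{d-1}(\Sigma)=\bd\cbd$ equals $\bd\bd^{T}$, and $L_\Gamma$ is its principal submatrix indexed by $R$, namely $\bd_R\bd_R^{T}$ with $\bd_R=\bd_{R,\Sigma_d}$ an $r\times|\Sigma_d|$ matrix. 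First I would apply Cauchy--Binet, obtaining $\det L_\Gamma=\sum_{B}(\det\bd_{R,B})^{2}$ over all $r$-element $B\subseteq\Sigma_d$. If the columns of $\bd$ indexed by $B$ are linearly dependent then $\det\bd_{R,B}=0$, so only the bases of $\MM(\Sigma)$ survive; these are exactly the facet sets $\Upsilon_d$ of the cellular spanning forests $\Upsilon$ of $\Sigma$.

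Next I would identify each surviving term with a torsion order. Fix a CSF $\Upsilon$. Because $\Gamma$ is relatively acyclic, the equivalence of conditions (a) and (d) in Proposition~\ref{det-is-homology} shows that $\hat\bd=\bd_{R,\Upsilon}$ is nonsingular. Using $\Upsilon_{(d-1)}=\Sigma_{(d-1)}$ and $\Gamma_{(d-2)}=\Sigma_{(d-2)}$ to compute chain groups, the relative cellular chain complex $C_\bullet(\Upsilon,\Gamma;\Zz)$ is concentrated in degrees $d-1$ and $d$, where it is $0\to\Zz^{r}\xrightarrow{\hat\bd}\Zz^{r}\to 0$. Hence $\HH_{d-1}(\Upsilon,\Gamma;\Zz)\isom\coker\hat\bd$ is a finite group of order $|\det\hat\bd|$, so, being finite, it is purely torsion and $(\det\bd_{R,\Upsilon})^{2}=\torh_{d-1}(\Upsilon,\Gamma)^{2}$. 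Summing over CSFs gives $\det L_\Gamma=\sum_{\Upsilon}\torh_{d-1}(\Upsilon,\Gamma)^{2}$.

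Finally I would pass from relative to absolute torsion coefficients. Proposition~\ref{relative-tor-formula} gives $\torh_{d-1}(\Upsilon,\Gamma)=\torh_{d-1}(\Upsilon)\,\torh_{d-1}(\Sigma,\Gamma)/\torh_{d-1}(\Sigma)$ for every CSF $\Upsilon$ (and all four quantities are positive integers, so the division is legitimate). Squaring this, substituting into the sum, and pulling the $\Upsilon$-independent factor $\torh_{d-1}(\Sigma,\Gamma)^{2}/\torh_{d-1}(\Sigma)^{2}$ outside leaves $\sum_{\Upsilon}\torh_{d-1}(\Upsilon)^{2}$, which is $\tau_d(\Sigma)$ by~\eqref{complexity}. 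Solving the resulting equation for $\tau_d(\Sigma)$ yields the claimed formula.

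I do not expect a genuine obstacle here: the two substantive ingredients---the characterization of nonsingularity in Proposition~\ref{det-is-homology} and the torsion-counting identity of Proposition~\ref{relative-tor-formula}---are already available, so what remains is bookkeeping. The points that need care are confirming $|R|=r$ (so that the Cauchy--Binet expansion runs over $r$-subsets), checking that the sum is supported exactly on the CSFs, identifying the two-term relative complex of $(\Upsilon,\Gamma)$ correctly, and invoking the elementary fact that $|\det M|=|\coker M|$ for a nonsingular square integer matrix $M$.
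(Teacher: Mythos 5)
Your proof is correct and is essentially the paper's own argument: Binet--Cauchy applied to $\det(\bd_R\cbd_R)$, Proposition~\ref{det-is-homology} to restrict the sum to cellular spanning forests and identify each surviving term as $\torh_{d-1}(\Upsilon,\Gamma)^2$, and Proposition~\ref{relative-tor-formula} to pull out the constant factor and recover $\tau_d(\Sigma)$. Your reading of $L_\Gamma$ as the principal submatrix indexed by $R=\Sigma_{d-1}\sm\Gamma$ (size $r$, as in the graph case of the reduced Laplacian) is exactly the interpretation the paper's proof uses, so there is no discrepancy in substance.
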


\begin{proof}
By the Binet-Cauchy formula and Propositions~\ref{det-is-homology}
and~\ref{relative-tor-formula}, we have
\begin{align*}
\det L_\Gamma &= \det \bd_\Gamma\cbd_\Gamma = \sum_{\Upsilon\subseteq \Sigma_d\st |\Upsilon|=r(\Sigma)} (\det\bd_{\Gamma,\Upsilon})^2
~=~ \sum_{\text{CSFs\ }\Upsilon\subseteq \Sigma_d} \torh_{d-1}(\Upsilon,\Gamma)^2\\
&= \frac{\torh_{d-1}(\Sigma,\Gamma)^2}{\torh_{d-1}(\Sigma)^2}\sum_{\text{CSFs\ }\Upsilon\subseteq \Sigma_d} \torh_{d-1}(\Upsilon)^2
~=~ \frac{\torh_{d-1}(\Sigma,\Gamma)^2}{\torh_{d-1}(\Sigma)^2}\tau_d(\Sigma)
\end{align*}
and solving for $\tau_d(\Sigma)$ gives the desired formula.
\end{proof}
If $\HH_{d-1}(\Sigma;\Zz)=\tor(\HH_{d-1}(\Sigma;\Zz))$, then the
relative homology sequence of the pair $(\Sigma,\Gamma)$ gives rise to the exact sequence
$$0\to\tor(\HH_{d-1}(\Sigma;\Zz))\to\tor(\HH_{d-1}(\Sigma,\Gamma;\Zz))\to\tor(\HH_{d-2}(\Gamma;\Zz))\to\tor(\HH_{d-2}(\Sigma;\Zz))\to0$$
which implies that
$\torh_{d-1}(\Sigma)/\torh_{d-1}(\Sigma,\Gamma)=\torh_{d-2}(\Sigma)/\torh_{d-2}(\Gamma)$, so
Proposition~\ref{CMFT} becomes the formula
$\tau_d(\Sigma) = \frac{\torh_{d-2}(\Sigma)^2}{\torh_{d-2}(\Gamma)^2}\det L_\Gamma$.  This was one of the original versions of the
cellular matrix-tree theorem~\cite[Theorem~2.8(2)]{Cellular}.

\begin{remark} \label{lyons-remark}
Lyons \cite[Corollary 6.2]{Lyons} proves a similar matrix-forest theorem in terms of an invariant $\torh'$ defined below.  He shows that each row of~\eqref{first-comm-diag} induces the corresponding row of~\eqref{induced-comm-diag}.  This does not quite imply Lemma~\ref{exactness-of-tor}, since one still needs to identify the ``error terms'' $G$ in the top and bottom rows of \eqref{induced-comm-diag}.  Doing so would amount to showing that $\HH_{d-1}(\Upsilon)/\ker(j_*)\isom\HH_{d-1}(\Sigma)/\ker(j_*)$ 
in the commutative diagram of Proposition~\ref{relative-tor-formula}.
 Alternatively, Proposition~\ref{relative-tor-formula} would follow from \cite[Lemma 6.1]{Lyons} together with the equation
$$\torh_{d-2}(\Gamma) \torh'_{d-1}(\bar\Gamma) / \torh_{d-2}(\Sigma) = \torh_{d-1}(\Sigma,\Gamma) / \torh_{d-1}(\Sigma)$$
where $\bar\Gamma=X_{(d-1)}\sm\Gamma$ and $\torh'_{d-1}(\bar\Gamma)=
\big\vert \ker\bd_{d-1}(\Sigma;\Zz) / 
  \big( (\ker\bd_{d-1}(\Sigma;\Zz)\cap\im\bd_d(\Sigma;\Qq)) + \ker\bd_{d-1}(\Gamma;\Zz) \big)
\big\vert$.
\end{remark}

\section{The Cut Space}\label{cutspace-section}

Throughout this section, let $\Sigma$ be a cell complex of dimension~$d$ and rank~$r$
(that is, every cellular spanning forest of~$\Sigma$ has $r$ facets).
For each~$i\leq d$, the \textit{$i$-cut space} and \textit{$i$-flow space} of $\Sigma$ are defined
respectively as the spaces of cellular coboundaries and cellular cycles:
\begin{align*}
\Cut_i(\Sigma) &= \im( \cbd_i\colon C_{i-1}(\Sigma,\Rr) \to C_i(\Sigma,\Rr)),\\
\Flow_i(\Sigma) &= \ker( \bd_i\colon C_i(\Sigma,\Rr) \to C_{i-1}(\Sigma,\Rr)).
\end{align*}
We will primarily be concerned with the case $i=d$.
For $i=1$, these are the standard graph-theoretic cut and
flow spaces of the 1-skeleton of $\Sigma$.

There are two natural ways to construct bases of the cut space of a graph, 
in which the basis elements correspond to either (a) vertex stars or (b)
the fundamental circuits of a spanning forest (see, e.g.~\cite[Chapter~14]{GodRoy}).
The former is easy to
generalize to cell complexes, but the latter takes more work.

First, if $G$ is a graph on vertex set $V$ and $R$ is a set of (``root'')
vertices, one in each connected component, then the rows of $\bd$ corresponding
to the vertices $V\sm R$ form a basis for $\Cut_1(G)$.  This
observation generalizes easily to cell complexes:

\begin{proposition} \label{star-basis}
A set of $r$ rows of~$\bd$ forms a
row basis if and only if the corresponding set of $(d-1)$-cells is the
complement of a relatively acyclic $(d-1)$-subcomplex.
\end{proposition}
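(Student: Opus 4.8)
The plan is to realize the condition that $\Gamma$ is relatively acyclic directly in terms of the submatrix of $\bd=\bd_d$ picked out by the chosen rows, and then read off the equivalence. Write $R\subseteq\Sigma_{d-1}$ for the given set of $r$ rows, and let $\Gamma$ be the $(d-1)$-subcomplex whose $(d-1)$-cells are $\Sigma_{d-1}\sm R$ and whose lower skeleton is all of $\Sigma_{(d-2)}$. Since $\Gamma$ and $\Sigma$ have identical cells in every dimension below $d-1$, and $\Gamma$ has no cells in dimension $d$, the relative cellular chain complex over $\Qq$ reduces to $0\to C_d(\Sigma;\Qq)\xrightarrow{\bar\bd} \Qq^{R}\to 0$ with all other terms zero, where $\bar\bd$ is exactly the row-restricted matrix $\bd_{R,\Sigma_d}$. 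Hence $\HH_{d-1}(\Sigma,\Gamma;\Qq)=\coker\bd_{R,\Sigma_d}$, $\HH_d(\Sigma,\Gamma;\Qq)=\ker\bd_{R,\Sigma_d}$, and $\HH_k(\Sigma,\Gamma;\Qq)=0$ for all $k<d-1$.

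Next I would invoke the criterion recorded just after Definition~\ref{rel-acyclic}: $\Gamma$ is relatively acyclic if and only if $\HH_k(\Sigma,\Gamma;\Qq)=0$ for all $k<d$ and the natural map $\HH_d(\Sigma;\Qq)\to\HH_d(\Sigma,\Gamma;\Qq)$ is an isomorphism. By the computation above, the only homology group that can obstruct the first clause is $\HH_{d-1}(\Sigma,\Gamma;\Qq)=\coker\bd_{R,\Sigma_d}$, and this vanishes precisely when $\bd_{R,\Sigma_d}$ is surjective onto $\Qq^R$, i.e.\ has full row rank $|R|=r$, i.e.\ the $r$ chosen rows of $\bd$ are linearly independent. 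Since $\rank\bd=r$, such a set of $r$ independent rows is the same thing as a row basis of $\bd$, so at this point one implication and half of the other are already in hand.

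It remains to dispose of the $\HH_d$ clause. When the $r$ rows are independent this clause is automatic: then $\rank\bd_{R,\Sigma_d}=r=\rank\bd$, while $\ker\bd_d\subseteq\ker\bd_{R,\Sigma_d}$ always (discarding rows only enlarges the kernel), so the two kernels, having common dimension $|\Sigma_d|-r$, coincide; the comparison map $\HH_d(\Sigma;\Qq)\to\HH_d(\Sigma,\Gamma;\Qq)$ is induced by the chain map that is the identity on $C_d$, hence under the identifications above is the inclusion $\ker\bd_d\hookrightarrow\ker\bd_{R,\Sigma_d}$, an isomorphism. Conversely, relative acyclicity forces $\coker\bd_{R,\Sigma_d}=0$, hence independence of the rows. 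Combined with the previous paragraph this proves the proposition, and the argument is uniform in the degenerate cases ($r=0$, or $\Sigma_{d-1}\sm R=\0$ so that $\dim\Gamma<d-1$), since the reduced chain complex above is valid in all of them.

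A quicker-looking alternative is to route everything through Proposition~\ref{det-is-homology}: if $\Gamma$ is relatively acyclic, pick any cellular spanning forest $\Upsilon$ and apply (d)$\Rightarrow$(a) to learn that $\bd_{R,\Upsilon}$ is an invertible $r\times r$ matrix, so the rows of $\bd$ over $R$ are independent; conversely, if those rows are independent, let $\Upsilon\subseteq\Sigma_d$ index a column basis of $\bd_{R,\Sigma_d}$ (necessarily a CSF, since then $\rank\bd_\Upsilon\ge r$) and apply (a)$\Rightarrow$(d). I do not expect a serious obstacle in either version; the single place the hypothesis $|R|=r$ does genuine work is in promoting ``$\bd_{R,\Sigma_d}$ has vanishing cokernel'' to ``the $\HH_d$ comparison map is an isomorphism,'' and the only real care needed is to track the degenerate cases when routing through Proposition~\ref{det-is-homology} rather than through the direct chain-complex computation.
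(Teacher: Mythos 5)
Your proposal is correct, and in fact it contains the paper's own argument as your ``quicker-looking alternative'': the paper proves Proposition~\ref{star-basis} in one line, calling it immediate from Proposition~\ref{det-is-homology} (rows $R$ are independent iff some $r\times r$ minor $\bd_{R,\Upsilon}$ is nonsingular for a column basis $\Upsilon$, which by (a)$\Leftrightarrow$(d) happens iff $\Gamma=\Sigma_{d-1}\sm R$ is relatively acyclic and $\Upsilon$ is a CSF). Your primary argument is a genuinely different and more self-contained route: you compute the relative chain complex of $(\Sigma,\Gamma)$ over $\Qq$ directly, identify $\HH_{d-1}(\Sigma,\Gamma;\Qq)=\coker\bd_{R,\Sigma_d}$ and $\HH_d(\Sigma,\Gamma;\Qq)=\ker\bd_{R,\Sigma_d}$, and then check the two conditions in the reformulation of relative acyclicity given after Definition~\ref{rel-acyclic}; the observation that independence of the $r$ rows forces $\ker\bd_d=\ker\bd_{R,\Sigma_d}$ by dimension count, so the $\HH_d$ comparison map (the inclusion of kernels) is automatically an isomorphism, is exactly the right way to dispose of the remaining clause. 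What your direct computation buys is that it avoids choosing a cellular spanning forest and handles the degenerate cases uniformly; what the paper's route buys is brevity, since Proposition~\ref{det-is-homology} has already done the homological bookkeeping. Both versions are sound.
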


This is immediate from Proposition~\ref{det-is-homology}.  Recall that
if $\HH_{d-1}(\Sigma;\Qq)=0$, then ``relatively acyclic
$(d-1)$-subcomplex'' is synonymous with ``spanning tree of the
$(d-1)$-skeleton''.  In this case, Proposition~\ref{star-basis} is
also a consequence of the fact that the matroid represented by the
rows of~$\bd_d$ is dual to the matroid represented by the columns
of~$\bd_{d-1}$~\cite[Proposition~6.1]{Cellular}.

The second way to construct a basis of the cut space of a graph is to
fix a spanning tree and take the signed characteristic vectors of its
fundamental bonds.  In the cellular setting, it is not hard to show
that each bond supports a unique (up to scaling) vector in the cut
space (Lemma~\ref{one-dim}) and that the fundamental bonds of a fixed
cellular spanning forest give rise to a vector space basis
(Theorem~\ref{cut-basis}).  (Recall from Section~\ref{prelims} that a
bond in a cell complex is a minimal collection of facets whose removal
increases the codimension-one homology, or equivalently a cocircuit of
the cellular matroid.)  The hard part is to identify the entries of
these cut-vectors.  For a graph, these entries are all 0 or $\pm1$.
In higher dimension, this need not be the case, but the entries can be
interpreted as the torsion coefficients of certain subcomplexes
(Theorem~\ref{calibration}).  In Section~\ref{flowspace-section}, we
will prove analogous results for the flow space.

\begin{remark} \label{puncturing}
Although many of our results may be stated in terms of algebraic chain complexes over $\Zz$ (integer boundary matrices), we use the language of cell complexes.  (This is a difference only in terminology, not the generality of the results, since every integer matrix is the top-dimensional boundary matrix of some cell complex.)  Thus, definitions and results about column bases, row bases, rank, etc., can be interpreted topologically in terms of cellular spanning trees and creating and puncturing holes in cell complexes (see Example~\ref{bipyramid-cut} and Figure~\ref{BipyramidFigure}).  This is analogous to the situation in algebraic graph theory, where results that can be stated in terms of matrices are often more significant in terms of trees, cuts, flows, etc.
\end{remark}

\subsection{A basis of cut-vectors}

Recall that the \emph{support} of a vector $v=(v_1,\dots,v_n)\in\Rr^n$ is the set
$$\supp(v)=\{i\in[n] \st v_i\neq 0\}.$$

\begin{proposition}\label{bond-is-minimal} \cite[Proposition 9.2.4]{Oxley}.
Let $M$ be a $r\x n$ matrix with rowspace $V\subseteq\Rr^n$, and
let $\MM$ be the matroid represented by the columns of $M$.  Then the
 cocircuits of $\MM$ are the inclusion-minimal elements of the
family $\Supp(V) := \{\supp(v) \st v\in V\sm\{0\}\}$.
\end{proposition}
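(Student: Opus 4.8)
The plan is to translate the matroid-theoretic description of a cocircuit---an inclusion-minimal set meeting every basis of $\MM$---into a statement about supports of vectors in the row space $V$, using only elementary linear algebra.

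First I would reduce to the case $\rank M = r = \dim V$: replacing the rows of $M$ by any basis of $V$ alters neither $V$ nor the column matroid $\MM$. Then $x\mapsto x^TM$ is a bijection $\Rr^r\to V$, so every nonzero element of $V$ equals $x^TM$ for a unique $x\in\Rr^r\sm\{0\}$, and $\supp(x^TM)=\{i\in[n]\st\langle x,M_i\rangle\neq 0\}$, where $M_i$ denotes the $i$th column of $M$.

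The crux is the following chain of equivalences for a subset $S\subseteq[n]$: the set $S$ meets every basis of $\MM$ iff the complement $[n]\sm S$ contains no basis, iff the columns $\{M_i\st i\notin S\}$ span a proper subspace of $\Rr^r$, iff there is a nonzero $x\in\Rr^r$ with $\langle x,M_i\rangle=0$ for all $i\notin S$, iff $\supp(x^TM)\subseteq S$ for some $x\in\Rr^r\sm\{0\}$, iff $S$ contains a member of $\Supp(V)$. (Taking $S=\supp(v)$ in particular shows that each element of $\Supp(V)$ itself meets every basis.) Hence the family of subsets of $[n]$ meeting every basis of $\MM$ is precisely the upward closure under inclusion of $\Supp(V)$; since passing to the upward closure of a family of subsets of a finite set does not change its inclusion-minimal members, the minimal sets meeting every basis are exactly the minimal elements of $\Supp(V)$. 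As the former are by definition the cocircuits of $\MM$, this is the claim.

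I do not anticipate a genuine obstacle here; the only points demanding care are the reduction to full row rank (so that $V$ is faithfully coordinatized by $x$) and keeping track of the complementation in the middle of the equivalence chain. One could alternatively argue via duality of representations---the circuits of a matroid represented by the columns of a matrix $N$ are the minimal supports of vectors in $\ker N$, and $\MM^*$ is represented by a matrix whose row space is $V^\perp$ and hence whose kernel is $V$, so the circuits of $\MM^*$, i.e.\ the cocircuits of $\MM$, are the minimal elements of $\Supp(V)$---but the direct argument above is self-contained and avoids invoking the behavior of matroid duality under representation.
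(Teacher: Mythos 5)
Your proof is correct. Note that the paper does not actually prove this proposition at all---it simply cites Oxley [Proposition 9.2.4] and uses the statement as a black box---so there is no internal argument to compare against; what you have supplied is a self-contained replacement for the citation. Your reduction to the case $\rank M = r$ is the right first move (the column matroid and the row space $V$ are both unchanged when the rows are replaced by a basis of $V$, since the dependencies among columns are governed by $\ker M = V^{\perp}$), and the chain of equivalences ``$S$ meets every basis $\iff$ the columns indexed by $[n]\sm S$ span a proper subspace $\iff$ some nonzero $v\in V$ has $\supp(v)\subseteq S$'' is exactly the standard argument, with the upward-closure observation correctly finishing the identification of minimal elements; you also implicitly use (correctly) that cocircuits are the minimal sets meeting every basis, which matches the paper's own working definition in Section~\ref{matroid-subsection}. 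The alternative route you sketch---circuits of a represented matroid as minimal supports of kernel vectors, applied to the dual representation with row space $V^{\perp}$---is essentially how the fact is usually packaged in matroid theory texts; your direct version trades the appeal to duality of representations for a few lines of elementary linear algebra, which is a reasonable bargain if one wants the statement to stand on its own.
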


\begin{lemma}\label{one-dim}
Let $B$ be a bond of~$\Sigma$.  Then the set
$$\Cut_B(\Sigma)=\{0\}\cup\{v \in \Cut_d(\Sigma) \st \supp(v) = B\}$$
is a one-dimensional subspace of $\Cut_d(\Sigma)$.
That is, up to scalar multiple, there is a unique cut-vector
whose support is exactly $B$.  

\end{lemma}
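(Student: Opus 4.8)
The plan is to realize $\Cut_B(\Sigma)$ as the intersection of the cut space $\Cut_d(\Sigma)$ with the coordinate subspace of $\Rr^n$ supported on $B$, and to show this intersection is exactly one-dimensional. First I would invoke Proposition~\ref{bond-is-minimal}: since $\Cut_d(\Sigma)=\im\cbd_d=\rowspace(\bd_d)$, the bond $B$, being a cocircuit of $\MM(\Sigma)$, is an inclusion-minimal element of $\Supp(\Cut_d(\Sigma))$. In particular there exists at least one vector $v\in\Cut_d(\Sigma)$ with $\supp(v)=B$, so $\Cut_B(\Sigma)$ is nonzero and contains no vector with support properly contained in $B$.

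Next I would prove that any two such vectors are scalar multiples of each other. Suppose $v,w\in\Cut_d(\Sigma)$ both have support exactly $B$. Pick any facet $\sigma\in B$; then $w_\sigma\neq 0$, and the vector $u=v-(v_\sigma/w_\sigma)w$ lies in $\Cut_d(\Sigma)$ and has $u_\sigma=0$, so $\supp(u)\subseteq B\sm\{\sigma\}\subsetneq B$. By minimality of $B$ in $\Supp(\Cut_d(\Sigma))$, the only way $\supp(u)$ can be a proper subset of $B$ is for $u$ to be the zero vector, i.e. $v=(v_\sigma/w_\sigma)w$. Hence $\Cut_B(\Sigma)\cup\{0\}$ is closed under scalar multiplication and any two nonzero elements are proportional, so it is a one-dimensional subspace. (It is automatically a subspace: it is nonempty, scalar multiples of elements with support $B$ again have support $B$ or are $0$, and the sum of two nonzero proportional vectors with support $B$ again has support $B$ unless it is $0$.)

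The one genuine subtlety — and the step I'd flag as the main obstacle — is the passage from ``$\supp(u)$ is a proper subset of $B$'' to ``$u=0$.'' This is exactly where minimality of the cocircuit is used, but one must be careful that Proposition~\ref{bond-is-minimal} is being applied to $V=\Cut_d(\Sigma)$, which requires identifying $\Cut_d(\Sigma)=\im\cbd_d$ with the rowspace of the boundary matrix $\bd=\bd_d$ and recalling that $\MM(\Sigma)$ is the matroid on the columns of $\bd$; then the cocircuits of $\MM(\Sigma)$ are precisely the minimal supports in that rowspace. Once this dictionary is in place the argument is purely linear-algebraic and short. I would close by noting the harmless edge case: $B$ nonempty is part of the definition of a cocircuit (a cell complex with no facets has an empty matroid with no cocircuits), so $\Cut_B(\Sigma)$ is genuinely one-dimensional rather than zero-dimensional.
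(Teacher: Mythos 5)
Your proposal is correct and follows essentially the same route as the paper: both arguments apply Proposition~\ref{bond-is-minimal} to the cut space (identified with the rowspace of $\bd_d$) to get existence of a cut-vector supported exactly on $B$, and both derive uniqueness up to scale by noting that two non-proportional such vectors would yield a nonzero linear combination with support strictly contained in $B$, contradicting the minimality of the cocircuit. Your version merely spells out the explicit combination $v-(v_\sigma/w_\sigma)w$, which the paper leaves implicit.
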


\begin{proof}
Suppose that $v,w$ are vectors in the cut space, both supported on
$B$, that are not scalar multiples of each other.  Then there is a
linear combination of $v,w$ with strictly smaller support; this
contradicts Proposition~\ref{bond-is-minimal}.  On the other hand,
Proposition~\ref{bond-is-minimal} also implies that $\Cut_B(\Sigma)$ is not the zero
space; therefore, it has dimension~1.
\end{proof}

We now know that for every bond~$B$, there is a cut-vector supported
on~$B$ that is uniquely determined up to a scalar multiple.  As we
will see, there is a choice of scale so that the coefficients of this
cut-vector are given by certain minors of the down-up Laplacian
$L=\Ldu_d(\Sigma)=\cbd\bd$ (Lemma~\ref{cut-vector-formula}); these minors (up to sign) can be interpreted
as the cardinalities of torsion homology groups (Theorem~\ref{calibration}).

In choosing a scale, the first step is to realize the elements of $\Cut_B(\Sigma)$ explicitly as images
of the map $\cbd$.   Fix an inner product
$\scp{\cdot,\cdot}$ on each chain group $C_i(\Sigma;\Rr)$ by declaring
the $i$-dimensional cells to be an orthonormal basis.  (This amounts
to identifying each cell with the cochain that is its characteristic
function.)  Thus, for $\alpha\in C_i(\Sigma;\Rr)$, we have
$\supp(\alpha)=\{\sigma\in\Sigma_i \st \scp{\sigma,\alpha}\neq 0\}$.
Moreover, for all $\beta\in C_{i-1}(\Sigma;\Rr)$, we have by basic linear algebra
\begin{equation} \label{scalar-product-identity}
\scp{\bd\alpha,\beta}=\scp{\alpha,\cbd\beta}.
\end{equation}

\begin{lemma} \label{cbd-of-what}
Let $B$ be a bond of $\Sigma$ and let $U$ be the space spanned by $\{\bd \sigma \st \sigma\in \Sigma_d\sm B\}$.
In particular, $U$ is a subspace of $\im\bd$ of codimension one.
Let $V$ be the orthogonal complement of $U$ in $\im\bd$, and let
$v$ be a nonzero element of $V$.  Then $\supp(\cbd v)=B$.
\end{lemma}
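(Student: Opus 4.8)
The plan is to combine the adjointness relation~\eqref{scalar-product-identity} between $\bd$ and $\cbd$ with the matroid-theoretic description of bonds in Proposition~\ref{bond-is-minimal}. As a preliminary observation, the rowspace of the matrix $\bd=\bd_d$ (whose columns represent $\MM(\Sigma)$) is exactly $\Cut_d(\Sigma)=\im\cbd$: the row indexed by a $(d-1)$-cell $\rho$, viewed as a vector in $C_d(\Sigma;\Rr)$, is the coboundary $\cbd\rho$. Hence Proposition~\ref{bond-is-minimal} says that the bonds of $\Sigma$ are precisely the inclusion-minimal members of the family $\Supp(\Cut_d(\Sigma))$, and this is the form in which I would use the hypothesis that $B$ is a bond. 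I would also note at the outset that, $B$ being a cocircuit, its complement $\Sigma_d\sm B$ is a flat of rank $r-1$, so $\dim U=r-1=\dim(\im\bd)-1$; this verifies the parenthetical codimension-one claim and guarantees that $V$ is a line, so that a nonzero $v\in V$ exists.

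First I would establish the easy containment $\supp(\cbd v)\subseteq B$. For any facet $\sigma\in\Sigma_d\sm B$ we have $\bd\sigma\in U$, hence $\scp{\sigma,\cbd v}=\scp{\bd\sigma,v}=0$ by~\eqref{scalar-product-identity} and the fact that $v$ lies in the orthogonal complement of $U$ in $\im\bd$. Thus $\cbd v$ vanishes on every facet outside $B$.

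Next I would check that $\cbd v\neq 0$, so that its support is genuinely nonempty. The relation~\eqref{scalar-product-identity} also gives $\ker\cbd=(\im\bd)^\perp$; since $v\in\im\bd$ is nonzero, it cannot lie in $(\im\bd)^\perp$, so $\cbd v\neq0$. Therefore $\cbd v$ is a nonzero element of $\Cut_d(\Sigma)$ with $\0\neq\supp(\cbd v)\subseteq B$, i.e., $\supp(\cbd v)\in\Supp(\Cut_d(\Sigma))$. Finally, since $B$ is inclusion-minimal in $\Supp(\Cut_d(\Sigma))$, no proper subset of $B$ belongs to this family, and hence $\supp(\cbd v)=B$, as desired.

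The argument is short and I do not foresee a real obstacle; the only point that needs care is degree bookkeeping — making sure that the two consequences of~\eqref{scalar-product-identity} that I invoke, namely $\scp{\sigma,\cbd v}=\scp{\bd\sigma,v}$ for the containment and $\ker\cbd=(\im\bd)^\perp$ for the nonvanishing, are applied with $\bd=\bd_d$ and $\cbd=\cbd_d$, and that Proposition~\ref{bond-is-minimal} is used for the matrix $\bd_d$, whose rowspace is precisely the cut space $\Cut_d(\Sigma)$.
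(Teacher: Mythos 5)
Your proposal is correct and follows essentially the same route as the paper: adjointness of $\bd_d$ and $\cbd_d$ gives $\supp(\cbd v)\subseteq B$, and minimality of the cocircuit $B$ (Proposition~\ref{bond-is-minimal}, applied to the rowspace of $\bd_d$, i.e.\ the cut space) upgrades this to equality. The only difference is the nonvanishing step, where you observe directly that $\ker\cbd_d=(\im\bd_d)^\perp$ meets $\im\bd_d$ trivially, while the paper compares $\rank(\cbd\bd)=r$ with $\dim U=r-1$; these are equivalent bits of linear algebra, and your version is, if anything, slightly more direct.
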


\begin{proof}
First, we show that $\cbd v\neq 0$.  To see this, observe that
the column space of~$\bd$ is $U + \Rr v$, so
the column space of~$\cbd\bd$ is $\cbd U + \Rr \cbd v$.  However,
$\rank(\cbd\bd)=\rank\bd=r$, and $\dim U=r-1$;
therefore, $\cbd v$ cannot be the zero vector.  Second, if
$\sigma \in \Sigma_d \sm B$, then $\bd \sigma \in U$, so
$\scp{\cbd v,\sigma} = \scp{v,\bd \sigma} = 0.$
It follows that $\supp(\cbd v)\subseteq B$, and in fact
$\supp(\cbd v)=B$ by Proposition~\ref{bond-is-minimal}.
\end{proof}

Given a bond $B$, let $A=\{\sigma_1,\dots,\sigma_{r-1}\}$ be a
cellular spanning forest of~$\Sigma\sm B$.  Fix a facet
$\sigma=\sigma_r\in B$, so that $A\cup\sigma$ is a cellular spanning
forest of~$\Sigma$.  Define a vector
$$v=v_{A,\sigma}
=\sum_{j=1}^r (-1)^j (\det \Ldu_{A,A\cup \sigma\sm \sigma_j}) \bd \sigma_j
~\in~ C_{d-1}(\Sigma;\Zz)$$
so that
\begin{equation} \label{cbdv}
\cbd v=\sum_{j=1}^r (-1)^j (\det \Ldu_{A,A\cup \sigma\sm \sigma_j}) \Ldu \sigma_j ~\in~ \Cut_d(\Sigma).
\end{equation}

\begin{lemma} \label{cut-vector-formula}
For the cut-vector $\cbd v$ defined in equation~\eqref{cbdv},
$$\cbd v=(-1)^r\sum_{\rho\in B} (\det \Ldu_{A\cup \rho,A\cup \sigma}) \rho.$$
In particular, $\supp(\cbd v)=B$.
\end{lemma}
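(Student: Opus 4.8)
The plan is to verify the claimed formula for $\cbd v$ by computing its coefficient on an arbitrary $d$-cell $\rho$ and comparing with a cofactor expansion. From equation~\eqref{cbdv} we have $\scp{\cbd v, \rho} = \sum_{j=1}^r (-1)^j (\det \Ldu_{A,A\cup\sigma\sm\sigma_j}) \scp{\Ldu\sigma_j, \rho}$, and since $\Ldu = \cbd\bd$ we have $\scp{\Ldu\sigma_j,\rho} = \scp{\bd\sigma_j,\bd\rho} = \Ldu_{\rho,\sigma_j}$. So the coefficient on $\rho$ is $\sum_{j=1}^r (-1)^j (\det \Ldu_{A,A\cup\sigma\sm\sigma_j})\, \Ldu_{\rho,\sigma_j}$. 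I would then recognize this sum as (up to a global sign I will track carefully) the cofactor expansion along the last row of the $r\times r$ matrix $\Ldu_{A\cup\rho,\,A\cup\sigma}$, whose rows are indexed by $A\cup\rho = \{\sigma_1,\dots,\sigma_{r-1},\rho\}$ and whose columns are indexed by $A\cup\sigma = \{\sigma_1,\dots,\sigma_{r-1},\sigma_r\}$. Indeed, deleting the row $\rho$ and the column $\sigma_j$ from this matrix leaves exactly $\Ldu_{A,A\cup\sigma\sm\sigma_j}$, and the signs $(-1)^j$ in \eqref{cbdv} are precisely the signs attached to a Laplace expansion along that bottom row. Matching the index conventions so that the expansion sign agrees with $(-1)^j$ (possibly absorbing a fixed sign depending on the chosen ordering, which does not affect the support statement) gives $\scp{\cbd v,\rho} = \det \Ldu_{A\cup\rho,\,A\cup\sigma}$ for every $\rho\in\Sigma_d$, which is the asserted formula once we check the coefficient vanishes for $\rho\notin B$.

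To finish, I need the support claim: $\scp{\cbd v,\rho} = 0$ for $\rho\notin B$, and $\scp{\cbd v,\rho}\neq 0$ for at least one (hence, combined with Lemma~\ref{cbd-of-what}, all) $\rho\in B$. The first part follows because, by definition of the fundamental bond, if $\rho\in\Sigma_d\sm B$ with $\rho\notin A$ then $A\cup\rho$ is not a CSF, so the columns $\bd\sigma_1,\dots,\bd\sigma_{r-1},\bd\rho$ are linearly dependent, forcing $\det\Ldu_{A\cup\rho,A\cup\sigma} = \det(\bd_{\bullet,A\cup\rho}^T \bd_{\bullet,A\cup\sigma})$ — wait, more directly: $\Ldu_{A\cup\rho,A\cup\sigma}$ is a product of the form (rows of $\bd$ restricted)$\cdot$(columns), so by Cauchy–Binet its determinant is a sum over $(d-1)$-subsets, but the cleanest route is: $\Ldu_{A\cup\rho,A\cup\sigma} = \bd_{A\cup\rho}^T\,\bd_{A\cup\sigma}$ is not quite a single-matrix minor, so instead I observe that the row space of $\bd_{A\cup\rho}$ (the columns $\bd\sigma$ for $\sigma\in A\cup\rho$) has dimension $<r$ when $A\cup\rho$ is dependent, hence $\det\Ldu_{A\cup\rho,A\cup\sigma}=0$. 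For $\rho\in B$, $A\cup\rho$ is a CSF, so both $\bd_{A\cup\rho}$ and $\bd_{A\cup\sigma}$ have rank $r$ and $\det\Ldu_{A\cup\rho,A\cup\sigma}\neq 0$ (this is where one uses that the product of two full-rank $r\times(\text{many})$ and $(\text{many})\times r$ pieces is nonsingular — equivalently, apply Proposition~\ref{det-is-homology} or the nondegeneracy already established in Lemma~\ref{cbd-of-what}). Alternatively, and more economically, the support statement is immediate once the formula is proved: $\cbd v$ is a nonzero element of $\Cut_d(\Sigma)$ with support contained in $B$ (the containment comes from the $\rho\notin B$ vanishing, or directly from $\cbd v$ lying in the line $\Cut_B(\Sigma)$ of Lemma~\ref{one-dim}), so its support is exactly $B$ by minimality of bonds (Proposition~\ref{bond-is-minimal}), provided we know $\cbd v\neq 0$, which holds because $\det\Ldu_{A\cup\sigma,A\cup\sigma}=\det\Ldu_{A,A}\cdot(\text{something})\neq0$ gives a nonzero coefficient on $\rho=\sigma$.

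The main obstacle I anticipate is purely bookkeeping: getting the sign conventions in the Laplace expansion to line up exactly with the $(-1)^j$ appearing in \eqref{cbdv}, including making sure the row labelled $\rho$ really plays the role of the "last row" regardless of how $\rho$ sits relative to the ordering of $A\cup\sigma$, and confirming that the coefficient extracted via $\scp{\cdot,\rho}$ matches the intended determinant rather than its negative. None of this is deep, but it must be done carefully; once the signs are settled, the vanishing for $\rho\notin B$ and the identification $\supp(\cbd v)=B$ are formal consequences of the fundamental-bond definition and Proposition~\ref{bond-is-minimal}.
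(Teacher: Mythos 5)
Your proposal is correct and takes essentially the same route as the paper: the paper also computes $\scp{\cbd v,\rho}$, recognizes the sum as the Laplace expansion of $\det \Ldu_{A\cup \rho,A\cup \sigma}$ along the row indexed by $\rho$, notes the coefficient at $\rho=\sigma$ is nonzero, and gets $\supp(\cbd v)=B$ from Lemma~\ref{cbd-of-what}. The only substantive difference is how the vanishing off $B$ is handled: the paper observes (Cramer's rule) that $v$ is orthogonal to $\bd\sigma_1,\dots,\bd\sigma_{r-1}$, hence to all of $U$, so $\scp{\cbd v,\rho}=\scp{v,\bd\rho}=0$ for every $\rho\in\Sigma_d\sm B$ in one stroke; your rank argument covers $\rho\notin A\cup B$ but omits $\rho\in A$, where $\Ldu_{A\cup\rho,A\cup\sigma}$ is not literally a square submatrix --- though within your setup this case is immediate, since the expansion then computes a determinant with a repeated row. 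Two small clean-ups: the asserted factorization $\det\Ldu_{A\cup\sigma,A\cup\sigma}=\det\Ldu_{A,A}\cdot(\text{something})$ is not a valid identity (the nonvanishing you want follows simply because $\Ldu_{A\cup\sigma,A\cup\sigma}$ is the Gram matrix of the linearly independent vectors $\bd\sigma_1,\dots,\bd\sigma_r$, and the paper in fact only checks $\rho=\sigma$ before letting Lemma~\ref{cbd-of-what} handle the rest of $B$); and the alternative justification that $\cbd v$ ``lies in the line $\Cut_B(\Sigma)$ of Lemma~\ref{one-dim}'' is circular, since membership in $\Cut_B(\Sigma)$ is defined by the very support condition being proved --- your other route (support contained in $B$, nonzero, then minimality via Proposition~\ref{bond-is-minimal}) is the right one.
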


\begin{proof}
For each $\rho\in B$, 
\begin{align*}
\scp{\cbd v,\rho}
 &= \sum_{j=1}^r (-1)^j \det \Ldu_{A,A\cup \sigma\sm \sigma_j} \scp{\Ldu \sigma_j,\rho}\\
 &= \sum_{j=1}^r (-1)^j \det \Ldu_{A,A\cup \sigma\sm \sigma_j} \scp{\bd \sigma_j,\bd \rho}\\
 &= \sum_{j=1}^r (-1)^j \det \Ldu_{A,A\cup \sigma\sm \sigma_j} \Ldu_{\rho,\sigma_j}\\
 &= (-1)^r\det \Ldu_{A\cup \rho,A\cup \sigma},
\end{align*}
where the last equality comes from expanding the row corresponding
to~$\rho$.  Note that $\det \Ldu_{A\cup \rho,A\cup \sigma}\neq0$
for
$\rho=\sigma$, so $\cbd v\neq 0$.  On the other hand, by Cramer's
rule, $v$ is orthogonal to $\bd \sigma_1,\dots, \bd \sigma_{r-1}$, so in fact $\scp{\cbd v,\rho}=0$ for all $\rho\in \Sigma_d \sm B$.
This establishes the desired formula for $\cbd v$, and then $\supp(\cbd v)=B$ by Lemma~\ref{cbd-of-what}.
\end{proof}

Equation~\eqref{cbdv} does not provide a canonical cut-vector
associated to a given bond~$B$, because $\cbd v$ depends on the choice of~$A$
and~$\sigma$.  On the other hand, the bond~$B$ can always be expressed as a fundamental
bond $\bond(\Upsilon,\sigma)$
(equivalently, fundamental cocircuit; see equation~\eqref{fund-bond} in Section~\ref{matroid-subsection}) by 
taking~$\sigma$ to be an arbitrary facet of~$B$ and taking
$\Upsilon=A\cup\sigma$, where~$A$ is a maximal acyclic subset of
$\Sigma\sm B$.  This observation suggests that the underlying combinatorial data
that gives rise to a cut-vector is really the pair $(\Upsilon,\sigma)$.

\begin{definition} \label{uncal-bond-vector}
Let $\Upsilon = \{\sigma_1, \sigma_2, \ldots, \sigma_r\}$ be a cellular spanning forest of~$\Sigma$, and let $\sigma=\sigma_i\in\Upsilon$.
The \emph{(uncalibrated) characteristic vector} of the bond $\bond(\Upsilon, \sigma)$ is:
$$\uncalcut(\Upsilon,\sigma) = (-1)^r \sum_{j=1}^r (-1)^j (\det \Ldu_{\Upsilon\sm\sigma, \Upsilon\sm\sigma_j}) \Ldu\sigma_j$$
By Lemma~\ref{cut-vector-formula}, taking $A=\Upsilon\sm \sigma$, we have
$$
\uncalcut(\Upsilon,\sigma) = \sum_{\rho\in\bond(\Upsilon,\sigma)} (\det \Ldu_{\Upsilon\sm \sigma\cup \rho,\Upsilon}) \rho,
$$
a cut-vector supported on $\bond(\Upsilon,\sigma)$.
\end{definition}

The next result is the cellular analogue of~\cite[Lemma~14.1.3]{GodRoy}.

\begin{theorem} \label{cut-basis}
The family $\{\uncalcut(\Upsilon,\sigma) \st \sigma\in \Upsilon\}$
is an $\Rr$-vector space basis for the cut space of $\Sigma$.
\end{theorem}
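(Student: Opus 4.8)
The plan is to prove two things: that the $r$ vectors $\uncalcut(\Upsilon,\sigma)$, as $\sigma$ ranges over the $r$ facets of $\Upsilon$, are linearly independent, and that they span $\Cut_d(\Sigma)$. Since $\dim\Cut_d(\Sigma)=\rank\cbd=\rank\bd=r$, it suffices to prove linear independence; spanning then follows by a dimension count. So the real content is linear independence.

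First I would set up coordinates that make the bond structure visible. Recall from equation~\eqref{fund-bond} that for each $\sigma_i\in\Upsilon$ the fundamental bond $\bond(\Upsilon,\sigma_i)$ contains $\sigma_i$, and moreover $\sigma_i\in\bond(\Upsilon,\sigma_j)$ only when $i=j$: indeed if $\sigma_i\in\bond(\Upsilon,\sigma_j)$ with $i\ne j$ then $\sigma_i\notin\Upsilon\sm\sigma_j$ would be forced, but $\sigma_i\in\Upsilon$, contradiction. Hence among the coordinates indexed by facets of $\Upsilon$, the vector $\uncalcut(\Upsilon,\sigma_i)$ is supported only at $\sigma_i$ (since $\supp\uncalcut(\Upsilon,\sigma_i)=\bond(\Upsilon,\sigma_i)$ by Definition~\ref{uncal-bond-vector}, and the only facet of $\Upsilon$ in that bond is $\sigma_i$ itself). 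Therefore, if we restrict each cut-vector to the coordinates indexed by $\Upsilon_d$, the resulting $r\times r$ matrix is diagonal, with $(i,i)$ entry equal to $\scp{\uncalcut(\Upsilon,\sigma_i),\sigma_i}=\det\Ldu_{\Upsilon,\Upsilon}$ (taking $\rho=\sigma=\sigma_i$ in the second formula of Definition~\ref{uncal-bond-vector}).

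The key step is then to observe that $\det\Ldu_{\Upsilon,\Upsilon}\ne 0$. This is because $\Ldu_{\Upsilon,\Upsilon}=\bd_{\cdot,\Upsilon}^T\bd_{\cdot,\Upsilon}$ is a Gram matrix of the columns of $\bd$ indexed by $\Upsilon$, and those columns are linearly independent precisely because $\Upsilon$ is a cellular spanning forest (a column basis of $\bd$), so the Gram matrix is positive definite. Thus the $r\times r$ diagonal restriction matrix is nonsingular, which forces the $r$ vectors $\uncalcut(\Upsilon,\sigma)$ to be linearly independent in $\Cut_d(\Sigma)$. Since $\dim\Cut_d(\Sigma)=r$, they form a basis.

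I do not expect any serious obstacle here: the argument is essentially the standard triangularity trick from the graphic case (\cite[Lemma~14.1.3]{GodRoy}), and all the ingredients — that $\supp\uncalcut(\Upsilon,\sigma)=\bond(\Upsilon,\sigma)$, that fundamental bonds meet $\Upsilon$ only in their generating facet, that $\det\Ldu_{\Upsilon,\Upsilon}\ne 0$ — are either immediate from the definitions or from the positive-definiteness of a Gram matrix of an independent set of vectors. The one point requiring a little care is verifying that the diagonal entry is literally $\det\Ldu_{\Upsilon,\Upsilon}$ and not some signed minor, but this is read off directly from the closed formula in Definition~\ref{uncal-bond-vector} by setting $\rho=\sigma$.
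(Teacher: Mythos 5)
Your proof is correct and follows essentially the same route as the paper: observe that $\supp\uncalcut(\Upsilon,\sigma)=\bond(\Upsilon,\sigma)$ meets $\Upsilon$ only in $\sigma$, so the restriction of the $r$ vectors to the coordinates $\Upsilon_d$ is a nonsingular diagonal matrix, giving linear independence, and then conclude by the dimension count $\dim\Cut_d(\Sigma)=r$. Your additional identification of the diagonal entry as $\det\Ldu_{\Upsilon,\Upsilon}\neq 0$ (a Gram determinant of the independent columns $\bd_\Upsilon$) just makes explicit what the paper absorbs into Lemma~\ref{cut-vector-formula}.
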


\begin{proof}
Let $\sigma\in\Upsilon$.  Then $\supp\uncalcut(\Upsilon,\sigma)=\bond(\Upsilon,\sigma)$ contains $\sigma$, but no other facet
of~$\Upsilon$.  Therefore, the set of characteristic vectors
is linearly independent,
and its cardinality is $|\Upsilon_d|=r=\dim\Cut_d(\Sigma)$.
\end{proof}

\begin{example}\label{bipyramid-cut}
The \emph{equatorial bipyramid} is the two-dimensional
simplicial complex $\Theta$ with facet set $\{123,124,125,134,135,234,235\}$
(Figure~\ref{BipyramidFigure}(a)).
Let $\Upsilon$ be the simplicial spanning tree with facets $\{123,124,234,135,235\}$ (unfolded in Figure~\ref{BipyramidFigure}(b)).  Then
\begin{align*}
\bond(\Upsilon,123)&=\{123,125,134\}, & \bond(\Upsilon,124)&=\{124,134\},\\
\bond(\Upsilon,135)&=\{135,125\}, & \bond(\Upsilon,234)&=\{234,134\},\\
\bond(\Upsilon,235)&=\{235,125\}.
\end{align*}
In each case, the removal of the bond leaves a 1-dimensional hole (as shown for the bond $\{123, 125, 134\}$ in Figure~\ref{BipyramidFigure}(c)).
By Theorem~\ref{cut-basis}, we have
\begin{align*}
\uncalcut(\Upsilon,123) &= 75([123]+[125]-[134]), &
\uncalcut(\Upsilon,124) &= 75([124]+[134]),\\
\uncalcut(\Upsilon,135) &= 75([125]+[135]), &
\uncalcut(\Upsilon,234) &= 75([134]+[234]),\\
\uncalcut(\Upsilon,235) &= 75([235]-[125]),
\end{align*}
which indeed form a basis for $\Cut_2(\Theta)$.
\begin{figure}[tb]
\includefigure{5.44in}{1.88in}{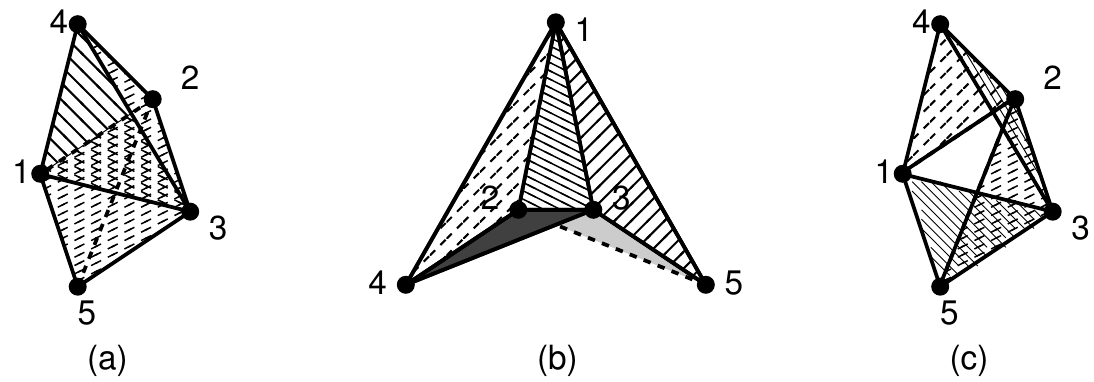}
\caption{\label{BipyramidFigure}
(a) The bipyramid $\Theta$. (b) A simplicial spanning tree
(unfolded). (c) Deleting the bond $\{123,134,125\}$.}
\end{figure}
\end{example}

\subsection{Calibrating the characteristic vector of a bond}

The term ``characteristic vector'' suggests that the coefficients of
$\uncalcut(\Upsilon,\sigma)$ should all be~0 or~1, 
but this is not necessarily possible, even by scaling, as Example~\ref{double-ravioli} below will show.
We would like to define the characteristic vector of a bond so that
it carries combinatorial or topological information,
avoiding extra factors such as the 
75 in Example~\ref{bipyramid-cut}.
We will show that the number
\begin{equation} \label{calfactor}
\mu_\Upsilon :=
\torh_{d-1}(\Upsilon)\sum_\Gamma \frac{\torh_{d-1}(\Sigma,\Gamma)^2}{\torh_{d-1}(\Sigma)^2},
\end{equation}
where the sum runs over all relatively acyclic $(d-1)$-subcomplexes $\Gamma\subseteq \Sigma$,
is an integer that divides every coefficient of the characteristic
vector.  Moreover, we will show that the entries of $\frac{1}{\mu_\Upsilon}
\uncalcut(\Upsilon,\sigma)$ are (up to sign) the torsion coefficients
of the cellular forests $\{\Upsilon\sm\sigma\cup\rho\}$ for $\rho\in\bond(\Upsilon,\sigma)$.

Let $\varepsilon^A_{\sigma,\sigma'}$ be the relative sign of $\bd \sigma,\bd \sigma'$ with respect to $\bd A$; that is, it is $+1$ or $-1$ according to whether
$\bd \sigma$ and $\bd \sigma'$ lie on the same or the opposite sides of the hyperplane in $\im\bd$ spanned by $\bd A$.  In the language of oriented matroids, this sign is simply a product of the entries corresponding to $\bd \sigma$ and $\bd \sigma'$ in one of the cocircuits corresponding to the hyperplane and determines the relative signs of a basis orientation on $A \cup \sigma$ and $A \cup \sigma'$~\cite[Section 3.5]{RedBook}.

\begin{proposition} \label{calculate-L}
Let $\Sigma$ be a cell complex of rank~$r$.
Let $\Upsilon=A\cup \sigma$ and $\Upsilon'=A\cup \sigma'$ be $d$-dimensional cellular spanning forests of $\Sigma$.
Let $\Ldu_{\Upsilon,\Upsilon'}$ be the restriction of the down-up Laplacian $\Ldu=\cbd\bd$ to the rows indexed by $\Upsilon$
and the columns indexed by $\Upsilon'$.
Then
$$\det \Ldu_{\Upsilon,\Upsilon'} = \det\cbd_\Upsilon\bd_{\Upsilon'} = \varepsilon \mu_\Upsilon \torh_{d-1}(\Upsilon')$$
where $\varepsilon=\varepsilon^A_{\sigma,\sigma'}$.
\end{proposition}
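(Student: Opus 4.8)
The plan is to expand $\det\Ldu_{\Upsilon,\Upsilon'}$ by the Binet--Cauchy formula and interpret the surviving terms homologically.  The first equality $\Ldu_{\Upsilon,\Upsilon'}=\cbd_\Upsilon\bd_{\Upsilon'}$ is immediate from $\Ldu=\cbd\bd$ together with the meaning of the restriction, so the work is in the second equality.  Since $\cbd_\Upsilon=\bd_\Upsilon^T$ under the orthonormal identification, $\Ldu_{\Upsilon,\Upsilon'}=\bd_\Upsilon^T\bd_{\Upsilon'}$, where $\bd_\Upsilon$ (resp.\ $\bd_{\Upsilon'}$) is the $|\Sigma_{d-1}|\x r$ submatrix of $\bd=\bd_d$ on the columns $\Upsilon$ (resp.\ $\Upsilon'$).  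Binet--Cauchy then gives
$$\det\Ldu_{\Upsilon,\Upsilon'}=\sum_{S\subseteq\Sigma_{d-1},\,|S|=r}(\det\bd_{S,\Upsilon})(\det\bd_{S,\Upsilon'}).$$
By Proposition~\ref{det-is-homology} (and using that $\Upsilon,\Upsilon'$ are cellular spanning forests) a summand is nonzero exactly when $S$ is the complement of the $(d-1)$-cells of a relatively acyclic subcomplex $\Gamma=\Gamma_S$; for such $S$ the relative chain complex of $(\Upsilon,\Gamma)$ sits in degrees $d$ and $d-1$ with differential $\bd_{S,\Upsilon}$, so $\HH_{d-1}(\Upsilon,\Gamma;\Zz)=\coker\bd_{S,\Upsilon}$ and hence $|\det\bd_{S,\Upsilon}|=\torh_{d-1}(\Upsilon,\Gamma)$, and similarly $|\det\bd_{S,\Upsilon'}|=\torh_{d-1}(\Upsilon',\Gamma)$ (this is the identification already used in the proof of Proposition~\ref{CMFT}).

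The main point is that the \emph{sign} of $(\det\bd_{S,\Upsilon})(\det\bd_{S,\Upsilon'})$ is the same for every admissible $S$, namely $\varepsilon=\varepsilon^A_{\sigma,\sigma'}$.  Write $\Upsilon=\{a_1,\dots,a_{r-1},\sigma\}$ and $\Upsilon'=\{a_1,\dots,a_{r-1},\sigma'\}$ with the shared facets in a common order.  For admissible $S$ the coordinate projection $\pi_S\colon C_{d-1}(\Sigma;\Rr)\to\Rr^S$ restricts to a linear isomorphism $\im\bd\to\Rr^S$.  Fix once and for all an ordered basis of $\im\bd$; for $v_1,\dots,v_r\in\im\bd$ let $[v_1,\dots,v_r]$ denote the determinant of their coordinate matrix in that basis, and let $D_S$ be the determinant of $\pi_S|_{\im\bd}$ relative to that basis and the standard basis of $\Rr^S$.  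The change-of-basis identity then gives $\det\bd_{S,\Upsilon}=D_S\,[\bd a_1,\dots,\bd a_{r-1},\bd\sigma]$ and $\det\bd_{S,\Upsilon'}=D_S\,[\bd a_1,\dots,\bd a_{r-1},\bd\sigma']$ --- with the \emph{same} factor $D_S$ --- so the product is $D_S^2\,[\bd A,\bd\sigma]\,[\bd A,\bd\sigma']$, whose sign does not depend on $S$.  Decomposing $\bd\sigma=h+tn$ and $\bd\sigma'=h'+t'n$ with $h,h'$ in the hyperplane $H=\Span\{\bd a_1,\dots,\bd a_{r-1}\}$ and $n\perp H$ inside $\im\bd$, this common sign is $\mathrm{sign}(tt')$, which is $+1$ or $-1$ according as $\bd\sigma,\bd\sigma'$ lie on the same or opposite sides of $H$ --- that is, it is exactly $\varepsilon^A_{\sigma,\sigma'}$.

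Combining these, every nonzero summand equals $\varepsilon\,\torh_{d-1}(\Upsilon,\Gamma)\,\torh_{d-1}(\Upsilon',\Gamma)$, so
$$\det\Ldu_{\Upsilon,\Upsilon'}=\varepsilon\sum_{\Gamma}\torh_{d-1}(\Upsilon,\Gamma)\,\torh_{d-1}(\Upsilon',\Gamma),$$
the sum over relatively acyclic $(d-1)$-subcomplexes $\Gamma\subseteq\Sigma$.  Proposition~\ref{relative-tor-formula} rewrites $\torh_{d-1}(\Upsilon,\Gamma)=\torh_{d-1}(\Upsilon)\,\torh_{d-1}(\Sigma,\Gamma)/\torh_{d-1}(\Sigma)$ and likewise for $\Upsilon'$; substituting and pulling out the factors independent of $\Gamma$ yields
$$\det\Ldu_{\Upsilon,\Upsilon'}=\varepsilon\,\torh_{d-1}(\Upsilon)\,\torh_{d-1}(\Upsilon')\sum_\Gamma\frac{\torh_{d-1}(\Sigma,\Gamma)^2}{\torh_{d-1}(\Sigma)^2}=\varepsilon\,\mu_\Upsilon\,\torh_{d-1}(\Upsilon'),$$
the last step being the definition~\eqref{calfactor} of $\mu_\Upsilon$.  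The genuinely delicate step is the uniformity of the sign: the factorization $\det\bd_{S,\Upsilon}=D_S\,[\bd A,\bd\sigma]$ with an $S$-independent orientation contribution is what makes it go through, and the orthogonal decomposition relative to $H$ is what identifies the common sign with the oriented-matroid quantity $\varepsilon^A_{\sigma,\sigma'}$; keeping track of the precise row and column orderings in $\Ldu_{\Upsilon,\Upsilon'}$ is routine bookkeeping that I would suppress.
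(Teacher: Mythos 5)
Your proposal is correct and takes essentially the same route as the paper's proof: Binet--Cauchy, Proposition~\ref{det-is-homology} to identify the nonvanishing summands with relatively acyclic $\Gamma_S$, the identification $|\det\bd_{S,\Upsilon}|=\torh_{d-1}(\Upsilon,\Gamma_S)$ together with Proposition~\ref{relative-tor-formula}, and a sign $\varepsilon^A_{\sigma,\sigma'}$ common to all summands. The only difference is that you explicitly justify the sign-uniformity (via the factorization $\det\bd_{S,\Upsilon}=D_S\,[\bd A,\bd\sigma]$ and the orthogonal decomposition relative to the hyperplane spanned by $\bd A$), a point the paper asserts without elaboration.
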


\begin{proof}
By the Binet-Cauchy formula, we have
\begin{equation} \label{detL}
\det \Ldu_{\Upsilon,\Upsilon'} = \sum_S (\det \cbd_{\Upsilon,S})(\det\bd_{S,\Upsilon'}) = \sum_S (\det \bd_{S,\Upsilon})(\det\bd_{S,\Upsilon'})
\end{equation}
where the sum runs over all sets $S\subseteq \Sigma_{d-1}$ with
$|S|=r$, and $\bd_{S,\Upsilon}$ is the corresponding $r\x r$ submatrix
of~$\bd$.  By Proposition~\ref{det-is-homology}, $\det\bd_{S,\Upsilon}$ is nonzero if and only if $\Gamma_S=\Sigma_{d-1}\sm S$ is relatively acyclic, and for those summands 
Proposition~\ref{relative-tor-formula} implies
$|\det\bd_{S,\Upsilon}|=\torh_{d-1}(\Upsilon,\Gamma_S)=\torh_{d-1}(\Upsilon)\torh_{d-1}(\Sigma,\Gamma_S)/\torh_{d-1}(\Sigma)$.

We claim that every summand in equation~\eqref{detL} has the same sign, namely~$\varepsilon$.  To see this, observe that since $\Upsilon,\Upsilon'$ are both column bases of $\bd$, there are unique scalars $\{c_\alpha:\ \alpha\in\Upsilon\}$ such that $\bd\sigma'=\sum_{\alpha\in\Upsilon} c_\alpha \bd\alpha$; in particular, $c_\sigma$ is nonzero and has the same sign as $\varepsilon$.  This equation holds upon restricting to any set of rows $S$.  By linearity of the determinant, $\det \bd_{S,\Upsilon'}=c_\sigma\det\bd_{S,\Upsilon}$ for each $S$, so the sign of every summand in \eqref{detL} is the same as that of $c_\sigma$.  Therefore \eqref{detL} gives
\begin{align*}
\det \Ldu_{\Upsilon,\Upsilon'}
&= \sum_S (\det \bd_{S,\Upsilon})(\det\bd_{S,\Upsilon'})\\
&= \sum_S \varepsilon \left(\frac{\torh_{d-1}(\Upsilon)  \torh_{d-1}(\Sigma,\Gamma_S)}{\torh_{d-1}(\Sigma)}\right) 
  \left(\frac{\torh_{d-1}(\Upsilon')  \torh_{d-1}(\Sigma,\Gamma_S)}{\torh_{d-1}(\Sigma)}\right)\\
&= \varepsilon \mu_\Upsilon \torh_{d-1}(\Upsilon').\qedhere
\end{align*}
\end{proof}

A corollary of the proof is that $\mu_\Upsilon$ is an integer, for the following reason.  The number $\torh_{d-1}(\Upsilon')$ is the gcd of the $r \x r$ minors of $\bd_{\Upsilon'}$, or equivalently the $r \x r$ minors of $\bd$ using columns $\Upsilon'$.  In other words, $\torh_{d-1}(\Upsilon')$ divides $\det\bd_{S,\Upsilon'}$ in every summand of~\eqref{detL}.  Therefore it divides $\det\Ldu_{\Upsilon,\Upsilon'}$, and $\mu_\Upsilon =\pm\det\Ldu_{\Upsilon,\Upsilon'}/\torh_{d-1}(\Upsilon')$ is an integer.

\begin{theorem}\label{calibration}
Let $B$ be a bond.  Fix a facet $\sigma\in B$ and a cellular spanning forest $A\subseteq \Sigma_d\sm B$,
so that in fact $B=\bond(A\cup \sigma,\sigma)$.  Define
the \emph{characteristic vector of $B$ with respect to $A$} as
$$\calcut{A}{B} := \frac{1}{\mu_\Upsilon}\uncalcut(A\cup \sigma,\sigma)
=\sum_{\rho\in B} \varepsilon^A_{\sigma,\rho} \torh_{d-1}(A\cup \rho) \,\, \rho.$$
Then $\calcut{A}{B}$ is in the cut space of $\Sigma$, and has integer coefficients.
Moreover, it depends on the choice of $\sigma$ only up to sign.
\end{theorem}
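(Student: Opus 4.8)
The plan is to deduce everything from Proposition~\ref{calculate-L} together with the formula in Definition~\ref{uncal-bond-vector}. Recall that $\uncalcut(\Upsilon,\sigma)=\sum_{\rho\in B}(\det\Ldu_{\Upsilon\sm\sigma\cup\rho,\Upsilon})\,\rho$, where $\Upsilon=A\cup\sigma$. For each $\rho\in B$, the subcomplex $A\cup\rho$ is again a cellular spanning forest of $\Sigma$ (this is exactly what it means for $\rho$ to lie in the fundamental bond $\bond(A\cup\sigma,\sigma)$), so we may apply Proposition~\ref{calculate-L} with $\Upsilon'=A\cup\rho$. This gives $\det\Ldu_{A\cup\rho,A\cup\sigma}=\varepsilon^A_{\sigma,\rho}\,\mu_\Upsilon\,\torh_{d-1}(A\cup\rho)$ — note the roles of $\Upsilon$ and $\Upsilon'$ here: the scalar factor $\mu_\Upsilon$ depends only on $A\cup\sigma$, hence is the same for every $\rho\in B$, while the torsion coefficient varies with $\rho$. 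Substituting, every coefficient of $\uncalcut(\Upsilon,\sigma)$ is divisible by $\mu_\Upsilon$, and dividing through yields
$$\calcut{A}{B}=\frac{1}{\mu_\Upsilon}\uncalcut(A\cup\sigma,\sigma)=\sum_{\rho\in B}\varepsilon^A_{\sigma,\rho}\,\torh_{d-1}(A\cup\rho)\,\rho,$$
which is exactly the claimed formula and manifestly has integer coefficients.

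Next I would verify $\calcut{A}{B}\in\Cut_d(\Sigma)$: this is immediate since $\uncalcut(A\cup\sigma,\sigma)$ lies in the cut space by Definition~\ref{uncal-bond-vector} (equivalently, by Lemma~\ref{cut-vector-formula}, it equals $\cbd v$ for the integer chain $v=v_{A,\sigma}$), and the cut space is a vector space closed under scaling by $1/\mu_\Upsilon$. The support statement is inherited: $\supp\uncalcut(A\cup\sigma,\sigma)=B$ by Lemma~\ref{cut-vector-formula}, and scaling by a nonzero scalar does not change the support, so $\supp\calcut{A}{B}=B$ as well (equivalently, every $\torh_{d-1}(A\cup\rho)\ge 1$ for $\rho\in B$).

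For the dependence on $\sigma$: suppose $\sigma,\sigma'\in B$ are two chosen facets, both giving rise to $B=\bond(A\cup\sigma,\sigma)=\bond(A\cup\sigma',\sigma')$ with the same forest $A$. Then $\calcut{A}{B}$ computed via $\sigma$ and via $\sigma'$ are both nonzero vectors in the one-dimensional space $\Cut_B(\Sigma)$ of Lemma~\ref{one-dim}, so they differ by a scalar; since both have integer coefficients with the $\rho$-coefficient equal to $\pm\torh_{d-1}(A\cup\rho)$ in each case (by the displayed formula, which is valid for either choice of distinguished facet), that scalar must be $\pm 1$. Concretely, the $\rho$-coefficient is $\varepsilon^A_{\sigma,\rho}\,\torh_{d-1}(A\cup\rho)$ versus $\varepsilon^A_{\sigma',\rho}\,\torh_{d-1}(A\cup\rho)$, and the cocircuit sign identity $\varepsilon^A_{\sigma,\rho}=\varepsilon^A_{\sigma,\sigma'}\,\varepsilon^A_{\sigma',\rho}$ (a product of cocircuit entries, as noted before Proposition~\ref{calculate-L}) shows the two vectors differ precisely by the global sign $\varepsilon^A_{\sigma,\sigma'}=\pm1$.

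The one genuine subtlety — the step I expect to need the most care — is bookkeeping the sign $\varepsilon^A_{\sigma,\sigma'}$ correctly through Proposition~\ref{calculate-L} and confirming that it is a single global sign independent of $\rho$, rather than something that could vary across the support. This is handled by the oriented-matroid interpretation: $\varepsilon^A_{\sigma,\rho}$ is the product of the $\bd\sigma$- and $\bd\rho$-entries of a fixed cocircuit representing the hyperplane spanned by $\bd A$, and the multiplicativity of these entries gives exactly the factorization above. Everything else is a direct substitution into results already proved, so the argument is short.
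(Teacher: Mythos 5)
Your proposal is correct and follows essentially the same route as the paper: apply Proposition~\ref{calculate-L} (with $\Upsilon=A\cup\sigma$, $\Upsilon'=A\cup\rho$, using symmetry of $\Ldu$) to the coefficients in Definition~\ref{uncal-bond-vector}, factor out $\mu_\Upsilon$, and observe that switching $\sigma$ to $\sigma'$ rescales all coefficients by $\varepsilon^A_{\sigma,\sigma'}=\pm1$. Your extra checks (membership in the cut space, support equal to $B$, and the cocircuit-entry identity making the sign global) are consistent with, and slightly more explicit than, the paper's two-line argument.
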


\begin{proof}
Apply the formula of Proposition~\ref{calculate-L} to the 
formula of Definition~\ref{uncal-bond-vector} for the characteristic vector, and
factor the integer $\mu_\Upsilon$ out of every coefficient.
Meanwhile, replacing $\sigma$ with a different facet $\sigma'\in B$ merely
multiplies all coefficients by
$\varepsilon^A_{\sigma',\rho}/\varepsilon^A_{\sigma,\rho}=\varepsilon^A_{\sigma,\sigma'}\in\{\pm1\}$.
\end{proof}

\begin{remark}
We are grateful to an anonymous referee for suggesting the following construction of the vector $\calcut{A}{B}$.  Let $M\in\Zz^{k\x n}$ be a matrix of rank $r$ whose first $r$ columns are linearly independent, and let $P$ be the generalized inverse of the submatrix of $M$ consisting of the first $r$ columns, so that $Q=PM$ is the reduced row-echelon form of $M$.  Thus $P^{-1}Q=M$.  By Cramer's rule, for each $j\in[n]$, the $j^{th}$ entry in the $r^{th}$ row of $Q$ is
\[q_{r,j}=\frac{\det(m_1, \dots, m_{r-1}, m_j)}{\det(m_1,\dots,m_{r-1},m_r)},\]
where $m_i$ denotes the $i^{th}$ column of $M$.

Now let $\Sigma,B,A,\sigma$ be as in Theorem~\ref{calibration}, let $r=\rank(\Sigma)$, and let $M$ be the top boundary matrix of $\Sigma$, with its first $r-1$ columns labeled by the facets of $A$ and the $r$th column labeled by $\sigma$.  The numerator of $q_{r,j}$ is
\[\begin{cases}
0 &\text{ for } j\not\in B,\\
\varepsilon^A_{\sigma,\rho_j}\torh_{d-1}(A\cup\rho_j) &\text{ for } j\in B,
\end{cases}\]
where $\rho_j$ denotes the facet corresponding to the $j$th column.
Therefore, clearing the denominators from the $r^{th}$ row of~$M$ produces the characteristic vector $\calcut{A}{B}$ of Theorem~\ref{calibration}.
\end{remark}

If $\Upsilon$ is a cellular spanning forest of $\Sigma$
and $\sigma\in\Upsilon_d$, then we define
the \emph{characteristic vector} of the pair $(\Upsilon,\sigma)$ by
\begin{equation} \label{calcut-upsilon-sigma}
\calcutta{\Upsilon}{\sigma} := \calcut{\Upsilon\sm\sigma}{\bond(\Upsilon,\sigma)} = \frac{1}{\mu_\Upsilon}\uncalcut(\Upsilon,\sigma).
\end{equation}

\begin{example} \label{ta-da}
Let $\Theta$ be the bipyramid of Example~\ref{bipyramid-cut}.
Every cellular spanning forest $\Upsilon\subseteq \Theta$ is torsion-free.
Moreover, the relatively acyclic subcomplexes $\Gamma$ that appear in equation~\eqref{calfactor}
are the spanning trees of the 1-skeleton $\Theta_{(1)}$ (see Section~\ref{cellular-forest}), which is the graph $K_5$ with one edge removed;
Accordingly, we have $\mu_\Upsilon=\tau(\Theta_{(1)})=75$, so the calibrated characteristic vectors
are as in Example~\ref{bipyramid-cut}, with all factors of 75 removed.
\end{example}

On the other hand, $\mu_\Upsilon$ is not necessarily the \emph{greatest} common factor of the
entries of each uncalibrated characteristic vector, as the following example illustrates.

\begin{example}\label{double-ravioli}
Consider the cell complex $\Sigma$ with a single vertex $v$, 
two 1-cells $e_1$ and $e_2$ attached at $v$, and four 2-cells
attached via the boundary matrix
$$
\bd=\bordermatrix{&\sigma_2&\sigma_3&\sigma_5&\sigma_7\cr
e_1&2&3&0&0\cr e_2&0&0&5&7}.
$$
Let $B$ be the bond $\{\sigma_2,\sigma_3\}$, so that the obvious candidate for
a cut-vector supported on~$B$ is the row vector $\begin{bmatrix}2&3&0&0\end{bmatrix}$.
On the other hand, taking $A=\{\sigma_5\}$ (a cellular spanning forest of $\Sigma \sm B$),
the calibrated characteristic vector given by Theorem~\ref{calibration} is
$$\calcut{A}{B}=\begin{bmatrix}10&15&0&0\end{bmatrix}.$$
For $\Upsilon=A\cup\{\sigma_2\}$, the uncalibrated characteristic vector of Definition~\ref{uncal-bond-vector} is
$$\uncalcut(\Upsilon,\sigma_2)=\begin{bmatrix}100&150&0&0\end{bmatrix}.$$
On the other hand, the calibration
factor $\mu_\Upsilon$ is not $\gcd(100,150)=50$, but rather 10, since $\torh_1(\Upsilon)=10$ and the summation of equation~\eqref{calfactor} has only one term,
namely $\Gamma=\Sigma_{(0)}$.  Similarly, for $A'=\{\sigma_7\}$ and $\Upsilon'=A'\cup\{\sigma_2\}$, we have
$$\calcut{A'}{B}=\begin{bmatrix}14&21&0&0\end{bmatrix},\quad
\uncalcut(\Upsilon',\sigma_2)=\begin{bmatrix}196&294&0&0\end{bmatrix},\quad
\mu_{\Upsilon'}=14.$$
\end{example}

\begin{remark} \label{torsion-in-simplicial-case}
As an illustration of where torsion plays a role, and of the principle that the cellular matroid $\MM(\Sigma)$ does not
provide complete information about cut-vectors, let $\Sigma$ be the
the complete $2$-dimensional simplicial complex on $6$ vertices, which has
complexity $6^6=46656$ \cite[Theorem 1]{Kalai}.  Most of
the cellular spanning trees of $\Sigma$ are contractible topological spaces,
hence $\Zz$-acyclic, and the calibrated cut-vectors obtained from them have
all entries equal to 0 or $\pm1$.  On the other hand, $\Sigma$ has twelve spanning trees $\Upsilon$ homeomorphic to the real projective plane (so that $\HH_1(\Upsilon;\Zz)\isom\Zz_2$).  For any facet $\sigma\in\Upsilon$, we have
$\bond(\Upsilon,\sigma)=\Sigma_2\sm\Upsilon_2\cup\{\sigma\}$,
and the calibrated cut-vector contains a $\pm2$ in position $\sigma$
and $\pm1$'s in positions $\Sigma\sm\Upsilon$.
\end{remark}

\begin{remark} \label{graph-mu}
When $\Sigma$ is a graph and $\Upsilon$ is a spanning forest, $\mu_\Upsilon$ is just the number of vertices of~$\Sigma$.  Then, for any edge~$\sigma$ in~$\Upsilon$, the vector $\calcut{\Upsilon}{\sigma}$ is the usual characteristic vector of the fundamental bond $\bond(\Upsilon,\sigma)$.
\end{remark}

\begin{remark} \label{dual-CMTT}
Taking $\Upsilon=\Upsilon'$ in the calculation of Proposition~\ref{calculate-L}
gives the equality
$$\sum_\Gamma\torh_{d-1}(\Sigma,\Gamma)^2
 =\frac{\torh_{d-1}(\Sigma)^2}{\torh_{d-1}(\Upsilon)^2}\det \Ldu_\Upsilon$$
which can be viewed as a dual form of Proposition~\ref{CMFT}, 
enumerating relatively acyclic $(d-1)$-subcomplexes, rather than cellular spanning forests.
\end{remark}

\section{The Flow Space}\label{flowspace-section}
In this section we describe the flow space of a cell complex.  We begin by observing that the cut and flow spaces are orthogonal to each other. 

\begin{proposition} \label{cut-flow-ortho}
The cut and flow spaces are orthogonal complements under the standard inner product on $C_d(\Sigma;\Rr)$.
\end{proposition}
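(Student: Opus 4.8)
The plan is to derive this directly from the adjointness relation~\eqref{scalar-product-identity}, namely $\scp{\bd\alpha,\beta}=\scp{\alpha,\cbd\beta}$, which says precisely that the coboundary map $\cbd=\cbd_d$ is the adjoint of the boundary map $\bd=\bd_d$ with respect to the chosen inner products on $C_d(\Sigma;\Rr)$ and $C_{d-1}(\Sigma;\Rr)$. Given this, the statement is a special case of the general linear-algebra fact that for a linear map $T\colon V\to W$ between finite-dimensional inner product spaces, $(\im T)^\perp=\ker T^*$.

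First I would verify orthogonality. Let $v\in\Cut_d(\Sigma)=\im\cbd$, so $v=\cbd\beta$ for some $\beta\in C_{d-1}(\Sigma;\Rr)$, and let $w\in\Flow_d(\Sigma)=\ker\bd$. Then by~\eqref{scalar-product-identity},
\[
\scp{v,w}=\scp{\cbd\beta,w}=\scp{\beta,\bd w}=\scp{\beta,0}=0,
\]
so $\Cut_d(\Sigma)\subseteq\Flow_d(\Sigma)^\perp$ (equivalently $\Flow_d(\Sigma)\subseteq\Cut_d(\Sigma)^\perp$).

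Next I would match dimensions to upgrade this inclusion to an equality of orthogonal complements. Writing $n=|\Sigma_d|$, we have $\dim\Flow_d(\Sigma)=\dim\ker\bd=n-\rank\bd=n-r$, while $\dim\Cut_d(\Sigma)=\rank\cbd=\rank\bd=r$ since a matrix and its transpose have the same rank. Hence $\dim\Cut_d(\Sigma)+\dim\Flow_d(\Sigma)=n=\dim C_d(\Sigma;\Rr)$. Combining the orthogonality inclusion with this dimension count forces $\Cut_d(\Sigma)=\Flow_d(\Sigma)^\perp$ and $\Flow_d(\Sigma)=\Cut_d(\Sigma)^\perp$, which is the claim. (Alternatively one can cite the kernel–image duality $(\im\cbd)^\perp=\ker(\cbd)^*=\ker\bd$ directly, bypassing the explicit dimension count.)

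There is no real obstacle here: the content is entirely in the adjoint identity~\eqref{scalar-product-identity}, which has already been recorded, so the proof is a two-line invocation of standard linear algebra together with the observation that the top boundary and coboundary matrices are transposes of one another. The only point worth stating carefully is that the relevant inner products are the ones making the cells orthonormal, so that $\cbd$ really is the adjoint of $\bd$; this has been set up in the paragraph preceding~\eqref{scalar-product-identity}.
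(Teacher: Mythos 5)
Your proposal is correct and matches the paper's own argument: orthogonality follows from the adjoint identity~\eqref{scalar-product-identity}, and the complementary-dimension count uses $\dim\im\cbd_d=\dim\im\bd_d$ (rank of a matrix equals that of its transpose). Nothing further is needed.
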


\begin{proof}
First, we show that the cut and flow spaces are orthogonal.
Let $\alpha\in\Cut_d=\im\cbd_d$ and $\beta\in\Flow_d=\ker\bd_d$.  Then $\alpha = \cbd \gamma$ for some ($d-1$)-chain $\gamma$, and
$\langle \alpha,\beta\rangle = \langle \cbd\gamma,\beta\rangle = \langle \gamma,\bd\beta\rangle = 0$
by equation~\eqref{scalar-product-identity}.

It remains to show that $\Cut_d$ and $\Flow_d$ have complementary
dimensions.  Indeed, let $n=\dim C_d(\Sigma;\Rr)$; then $\dim\Flow_d =
\dim\ker\bd_d = n-\dim\im\bd_d = n-\dim\im\cbd_d = n-\dim\Cut_d$.
\end{proof}

Next we construct a basis of the flow space whose elements correspond to fundamental circuits of a given cellular spanning forest.  Although cuts and flows are in some sense dual constructions, it is easier in this case to work with kernels than images, essentially because of Proposition~\ref{TFAE-lattices}.  As a consequence, we can much more directly obtain a characteristic flow vector whose coefficients carry topological meaning.

We need one preliminary result from linear algebra.

\begin{proposition} \label{flow}
Let $N$ be an $r\x c$ integer matrix of rank $c-1$ such that
every set of $c-1$ columns is linearly independent, so that $r\geq c-1$ and $\dim\ker N=1$.
Then $\ker N$ has a spanning vector $v=(v_1,\dots,v_c)$ such that
  $$v_i=\pm|\tor(\coker N_{\bar{\imath}})|$$
where $N_{\bar{\imath}}$ denotes the submatrix of $N$ obtained by deleting
the $i^{th}$ column.  In particular, $v_i\neq 0$ for all~$i$.
\end{proposition}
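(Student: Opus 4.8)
The plan is to argue homologically, by comparing $\coker N$ with $\coker N_{\bar\imath}$, rather than writing $v$ down by a determinant formula: Cramer's rule produces a vector in $\ker N$ whose entries are individual maximal minors of $N$, not the torsion orders $|\tor(\coker N_{\bar\imath})|$, so it does not immediately give the asserted formula. First I would observe that $\ker_\Zz N := \ker N\cap\Zz^c$ is a saturated sublattice of $\Zz^c$ (being the kernel of the integer map $N$, it is saturated by Proposition~\ref{TFAE-lattices}) of rank $c-\rank N = 1$, hence is generated by a primitive vector $v_0$, and $\ker_\Rr N = \Rr v_0$. Note that $(v_0)_i=0$ exactly when the columns of $N$ other than the $i$th are already linearly dependent, i.e.\ when $\rank N_{\bar\imath}<c-1$; since $|\tor(\coker N_{\bar\imath})|\ge1$ always, the claim ``$v_i\ne0$'' requires --- and I would assume --- that every $N_{\bar\imath}$ has rank $c-1$, equivalently that the columns of $N$ form a circuit of the matroid they represent, which is precisely the setting in which the proposition gets applied.

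The crux is the identity $|\tor(\coker N_{\bar\imath})| = |(v_0)_i|\cdot|\tor(\coker N)|$ for every $i$. I would obtain it by comparing the presentation $\Zz^c\xrightarrow{N}\Zz^r\to\coker N\to0$ with $\Zz^{c-1}\xrightarrow{N_{\bar\imath}}\Zz^r\to\coker N_{\bar\imath}\to0$, the two being related by the inclusion $\iota_i\colon\Zz^{c-1}\to\Zz^c$ that inserts a $0$ in coordinate $i$ (so that $N_{\bar\imath}=N\iota_i$ and $\Zz^c/\iota_i(\Zz^{c-1})\cong\Zz$). The snake lemma, applied to the resulting map of short exact sequences --- the identity on the copies of $\Zz^r$, the map induced by $\iota_i$ on the sources (injective because $\iota_i^{-1}(\ker N)=\ker N_{\bar\imath}$), and the induced surjection $f\colon\coker N_{\bar\imath}\to\coker N$ --- identifies $\ker f$ with $\Zz^c/(\iota_i(\Zz^{c-1})+\ker N)$, which the $i$th coordinate carries isomorphically onto $\Zz/(v_0)_i\Zz$. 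Thus $0\to\Zz/(v_0)_i\Zz\to\coker N_{\bar\imath}\xrightarrow{f}\coker N\to0$ is exact; since $(v_0)_i\ne0$ its left-hand term is finite, so $\coker N_{\bar\imath}$ and $\coker N$ have the same free rank, $f$ restricts to a surjection on torsion subgroups, and counting in $0\to\Zz/(v_0)_i\Zz\to\tor(\coker N_{\bar\imath})\to\tor(\coker N)\to0$ gives the identity.

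Finally, I would set $v := |\tor(\coker N)|\,v_0$. This still spans the line $\ker_\Rr N$, and $|v_i| = |\tor(\coker N)|\cdot|(v_0)_i| = |\tor(\coker N_{\bar\imath})|$, so $v_i = \pm|\tor(\coker N_{\bar\imath})|$; each entry is nonzero since the cardinality of a torsion group is at least $1$. The step I expect to be the main obstacle is the identity in the middle paragraph: setting up the snake-lemma diagram correctly (the injectivity of the induced left-hand map and the identification of $\ker f$) and then justifying that exactness survives passage to torsion subgroups, which genuinely uses that $\coker N$ and $\coker N_{\bar\imath}$ have the same free rank --- hence the circuit hypothesis. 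The remaining steps are routine.
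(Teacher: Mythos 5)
Your proof is correct, but it takes a genuinely different route from the paper's. The paper first performs a unimodular row reduction: it chooses an invertible integer matrix $Q$ whose first $r-(c-1)$ rows form a $\Zz$-basis of $\ker(N^T)$, so that $P=QN=\left[\begin{smallmatrix}0\\M\end{smallmatrix}\right]$ with $M$ of size $(c-1)\times c$; Cramer's rule applied to $M$ gives a kernel vector with entries $\pm\det M_{\bar\imath}=\pm|\tor(\coker P_{\bar\imath})|$, and a snake-lemma diagram involving saturations shows $|\tor(\coker P_{\bar\imath})|=|\tor(\coker N_{\bar\imath})|$ because $Q$ is invertible. You instead work with $N$ directly: you take the primitive generator $v_0$ of the saturated rank-one lattice $\ker_\Zz N$, prove the scaling identity $|\tor(\coker N_{\bar\imath})|=|(v_0)_i|\cdot|\tor(\coker N)|$ from the exact sequence $0\to\Zz/(v_0)_i\Zz\to\coker N_{\bar\imath}\to\coker N\to0$ (with surjectivity on torsion subgroups because the kernel is finite), and then rescale by $|\tor(\coker N)|$. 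Your identification of the kernel of $\coker N_{\bar\imath}\to\coker N$ is right: it is $\im N/\im N_{\bar\imath}\cong\Zz^c/\bigl(\iota_i(\Zz^{c-1})+\ker_\Zz N\bigr)$, which the $i$th coordinate maps onto $\Zz/(v_0)_i\Zz$. One phrasing quibble: this is not literally a map of short exact sequences, since $\Zz^c\to\Zz^r$ is not injective; either make the identification directly as above, or replace $\Zz^c$ by $\Zz^c/\ker_\Zz N$ before invoking the snake lemma. What your route buys: it avoids the auxiliary matrices $Q,P,M$ and Cramer's rule, and it gives the finer statement that all the torsion orders share the common factor $|\tor(\coker N)|$ with quotient the primitive vector $v_0$ --- in the language of Section 6, the gcd $g$ in the definition of $\calflow(C)$ equals $|\tor(\coker\bd_C)|$ and $\calflow(C)=\pm v_0$. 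What the paper's route buys is the explicit determinantal description of the entries as maximal minors, which meshes with the Binet--Cauchy computations used elsewhere.

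Your added hypothesis that every $N_{\bar\imath}$ has rank $c-1$ (i.e., the columns of $N$ form a circuit) is a fair --- indeed necessary --- reading of the statement. As literally stated the proposition fails, e.g.\ for $N=\begin{pmatrix}1&0&0\\0&1&-1\end{pmatrix}$: the kernel is spanned by $(0,1,1)$, yet $\tor(\coker N_{\bar 1})$ is trivial, so no spanning vector has $v_1=\pm1$. The paper's own proof tacitly assumes the same thing (the equality $\pm\det M_{\bar\imath}=\pm|\coker M_{\bar\imath}|$ requires $M_{\bar\imath}$ nonsingular), and the proposition is only ever applied with $N=\bd_C$ for a circuit $C$, where each $N_{\bar\imath}$ does have rank $c-1$. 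So making the hypothesis explicit is a correction, not a weakening.
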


\begin{proof}
Let $Q$ be an $r\x r$ matrix whose first $r-(c-1)$ rows form a $\Zz$-module basis
for $\ker(N^T)$, and whose remaining $c-1$ rows extend it to a basis
of $\Zz^r$ (see Proposition \ref{TFAE-lattices}).  Then $Q$ is invertible over $\Zz$, and the matrix
$P=QN=(N^TQ^T)^T$ has the form
  $$P = \left[\begin{array}{c}0\\\hline M\end{array}\right]$$
where $M$ is a $(c-1)\x c$ matrix whose column matroid
is the same as that of $N$.  Then $\ker N = \ker P = \ker M$.
Meanwhile, by Cramer's rule, $\ker M$ is the one-dimensional space
spanned by $v=(v_1,\dots,v_c)$, where
$v_i=(-1)^i\det M_{\bar{\imath}}=\pm|\coker M_{\bar{\imath}}|=\pm|\tor(\coker P_{\bar{\imath}})|$.
Since $Q$ is invertible, it induces isomorphisms $\coker N_{\bar{\imath}}\isom\coker QN_{\bar{\imath}}=\coker P_{\bar{\imath}}$ for all~$i$, so
$|\tor(\coker P_{\bar{\imath}})|=|\tor(\coker N_{\bar{\imath}})|$, completing the proof.
\end{proof}

Recall that a set of facets $C\subseteq \Sigma_d$ is a circuit of the cellular matroid $\MM(\Sigma)$ if and only if it corresponds to a minimal linearly dependent set of columns of~$\bd_d$.
Applying Proposition~\ref{flow} with $N=\bd_C$
(i.e., the restriction of $\bd$ to the columns indexed by~$C$), we obtain a flow vector whose support is exactly~$C$.
We call this the \emph{characteristic vector} $\uncalflow(C)$.

\begin{theorem}\label{charflow}
Let $C$ be a circuit of the cellular matroid $\MM(\Sigma)$, and let $\Delta\subseteq\Sigma$ be the subcomplex $\Sigma_{(d-1)}\cup C$.  Then
$$\uncalflow(C)=\sum_{\sigma \in C} \pm \torh_{d-1}(\Delta\sm\sigma) \, \sigma.$$
\end{theorem}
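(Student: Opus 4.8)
The plan is to apply Proposition~\ref{flow} directly to the matrix $N = \bd_C$, where $\bd_C$ is the restriction of the top boundary map to the columns indexed by the circuit $C$. Since $C$ is a circuit of $\MM(\Sigma)$, the columns of $\bd_C$ form a minimal linearly dependent set, so $\rank \bd_C = |C| - 1$ and $\dim\ker\bd_C = 1$; this is exactly the hypothesis needed for Proposition~\ref{flow} with $c = |C|$. That proposition then produces a spanning vector $\uncalflow(C)$ of $\ker\bd_C$ with $\sigma$-entry equal (up to sign) to $|\tor(\coker (\bd_C)_{\bar\sigma})|$, where $(\bd_C)_{\bar\sigma}$ deletes the column indexed by $\sigma$. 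All that remains is to identify this torsion group topologically.

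The key step is therefore to recognize $\coker (\bd_C)_{\bar\sigma}$ as a relative homology group. I would argue as follows. The matrix $(\bd_C)_{\bar\sigma}$ is the restriction of $\bd_d(\Sigma)$ to the columns $C \sm \sigma$. Now consider the subcomplex $\Delta = \Sigma_{(d-1)} \cup C$, which has the same $(d-1)$-skeleton as $\Sigma$ and facet set $C$; inside it, $\Delta \sm \sigma$ has facet set $C \sm \sigma$. Since $C$ is a circuit, $C \sm \sigma$ is independent, i.e., the columns of $(\bd_C)_{\bar\sigma}$ are linearly independent, so $\HH_d(\Delta\sm\sigma;\Zz) = \ker\bd_d(\Delta\sm\sigma;\Zz) = 0$. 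The cellular chain complex of $\Delta\sm\sigma$ in degrees $d$ and $d-1$ is $C_d(\Delta\sm\sigma) \xrightarrow{(\bd_C)_{\bar\sigma}} C_{d-1}(\Sigma)$, so $\coker (\bd_C)_{\bar\sigma} = C_{d-1}(\Sigma)/\im\bd_d(\Delta\sm\sigma)$. This cokernel surjects onto $\HH_{d-2}(\Sigma_{(d-1)};\Zz)$ (which is free, being a subgroup of $C_{d-2}$ modulo nothing relevant — more precisely its torsion is controlled) with kernel $\HH_{d-1}(\Delta\sm\sigma;\Zz)$; hence $\tor(\coker (\bd_C)_{\bar\sigma}) \isom \tor(\HH_{d-1}(\Delta\sm\sigma;\Zz))$, whose order is $\torh_{d-1}(\Delta\sm\sigma)$. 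Combining with Proposition~\ref{flow} gives the claimed formula $\uncalflow(C) = \sum_{\sigma\in C} \pm\,\torh_{d-1}(\Delta\sm\sigma)\,\sigma$.

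I expect the main obstacle to be the bookkeeping in the identification $\tor(\coker (\bd_C)_{\bar\sigma}) \isom \tor(\HH_{d-1}(\Delta\sm\sigma;\Zz))$: one must be careful that passing from the cokernel of a single matrix to a homology group does not introduce extra torsion coming from the lower skeleton. The clean way is to note that $\coker(\bd_C)_{\bar\sigma}$ has a free part of rank $|\Sigma_{d-1}| - (|C|-1)$ together with the torsion of $\HH_{d-1}(\Delta\sm\sigma;\Zz)$, because $\ker\bd_{d-1}(\Sigma)/\im\bd_d(\Delta\sm\sigma)$ is precisely $\HH_{d-1}(\Delta\sm\sigma;\Zz)$ and $C_{d-1}(\Sigma)/\ker\bd_{d-1}(\Sigma) \isom \im\bd_{d-1}(\Sigma)$ is free. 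So the torsion of the whole cokernel equals the torsion of $\HH_{d-1}(\Delta\sm\sigma;\Zz)$, which is what we want. An alternative, perhaps slicker, route is to run the argument with the chain complex of the pair $(\Delta\sm\sigma, \Sigma_{(d-2)})$ or directly invoke Proposition~\ref{det-is-homology}-style reasoning; but the direct cokernel computation above is the most transparent and I would write it that way.
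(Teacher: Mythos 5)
Your proposal is correct and follows essentially the same route as the paper: apply Proposition~\ref{flow} to $N=\bd_C$ and then identify $\tor(\coker N_{\bar\sigma})$ with $\tor(\HH_{d-1}(\Delta\sm\sigma;\Zz))$ using the fact that $\ker\bd_{d-1}$ is a free summand of $C_{d-1}(\Sigma;\Zz)$ (equivalently, that $C_{d-1}(\Sigma)/\ker\bd_{d-1}\isom\im\bd_{d-1}$ is free), which is exactly the paper's one-line justification. The only blemish is the passing claim that the cokernel surjects onto $\HH_{d-2}(\Sigma_{(d-1)};\Zz)$ --- the correct free quotient is $\im\bd_{d-1}(\Sigma)$, as you yourself state in the ``clean way'' paragraph, so the final argument stands.
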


\begin{proof}
Let $N=\bd_C$, and for $\sigma\in C$, let $N_{\bar\sigma}$ denote $N$ with the column~$\sigma$ removed.
By Proposition~\ref{flow}, it suffices to show that the two groups
$$\HH_{d-1}(\Delta\sm\sigma;\Zz)=\frac{\ker \bd_{d-1}}{\im N_{\bar{\sigma}}},\qquad
\coker N_{\bar{\sigma}} = \frac{C_{d-1}(\Sigma;\Zz)}{\im N_{\bar{\sigma}}}$$
have the same torsion summands.  But this is immediate because $\ker\bd_{d-1}$ is a summand of $C_{d-1}(\Sigma;\Zz)$
as a free $\Zz$-module.
\end{proof}

\begin{example}
Consider the cell complex $\Sigma$ with two vertices $v_1$ and $v_2$,
three one-cells $e_1$, $e_2$, and $e_3$, each one with endpoints $v_1$ and $v_2$, 
and three two-cells 
$\sigma_1$, $\sigma_2$, and $\sigma_3$ attached to the one-cells so that 
the 2-dimensional boundary matrix is
$$
\bd = \bordermatrix{&\sigma_1 & \sigma_2 & \sigma_3 \cr
e_1 & 2 &  2 &  0 \cr
e_2 & -2 &  0 &  1 \cr
e_3 & 0 & -2 & -1
}.
$$
The only circuit in $\Sigma$ is the set $C$ of all three two-cells.  Thus
$\uncalflow(C) = 2\sigma_1-2\sigma_2+4\sigma_3$, because the relevant integer homology groups are
$\HH_1(\Delta\sm \sigma_1) \isom \HH_1(\Delta\sm\sigma_2) \isom \Zz_2$, but
$\HH_1(\Delta\sm \sigma_3) \isom \Zz_2 \oplus \Zz_2$.
\end{example}

For a cellular spanning forest $\Upsilon$ and facet $\sigma\not\in\Upsilon$, 
let $\circuit(\Upsilon,\sigma)$ denote 
the \emph{fundamental circuit of $\sigma$ with respect to $\Upsilon$}, 
that is, the unique circuit in $\Upsilon\cup\sigma$.

\begin{theorem} \label{flow-basis}
Let $\Sigma$ be a cell complex and $\Upsilon\subseteq\Sigma$ a cellular spanning forest.  Then the set
$$ \{\uncalflow(\circuit(\Upsilon,\sigma)) \st \sigma \not\in \Upsilon \}$$
forms an $\mathbb{R}$-vector space basis for the flow space of $\Sigma$.
\end{theorem}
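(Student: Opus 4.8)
The plan is to mirror the proof of Theorem~\ref{cut-basis}, using a support/dimension count. First I would pin down the dimension of the target: by Proposition~\ref{cut-flow-ortho} (or directly, since $\dim\ker\bd_d=|\Sigma_d|-\rank\bd_d$), we have $\dim\Flow_d(\Sigma)=|\Sigma_d|-r$. On the other hand, $\Upsilon$ is a basis of the cellular matroid $\MM(\Sigma)$, so $|\Upsilon_d|=r$, and hence the proposed set $\{\uncalflow(\circuit(\Upsilon,\sigma))\st \sigma\notin\Upsilon\}$ has exactly $|\Sigma_d|-r$ elements, matching the dimension. So it suffices to prove linear independence.

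For linear independence I would exploit the supports. For each $\sigma\in\Sigma_d\sm\Upsilon_d$, the fundamental circuit $\circuit(\Upsilon,\sigma)$ is the unique circuit contained in $\Upsilon\cup\sigma$; in particular $\circuit(\Upsilon,\sigma)\subseteq\Upsilon_d\cup\{\sigma\}$, so it contains $\sigma$ but no other facet outside~$\Upsilon$. Since $\circuit(\Upsilon,\sigma)$ is a circuit of $\MM(\Sigma)$, the matrix $N=\bd_{\circuit(\Upsilon,\sigma)}$ has rank $|\circuit(\Upsilon,\sigma)|-1$, so Proposition~\ref{flow} applies and gives that $\uncalflow(\circuit(\Upsilon,\sigma))$ is supported on all of $\circuit(\Upsilon,\sigma)$ — every coefficient in the support is $\pm|\tor(\coker N_{\bar\imath})|\neq 0$, and in particular the $\sigma$-coordinate of $\uncalflow(\circuit(\Upsilon,\sigma))$ is nonzero. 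Therefore, if we record these $|\Sigma_d|-r$ vectors as the rows of a matrix and then restrict to the $|\Sigma_d|-r$ columns indexed by $\Sigma_d\sm\Upsilon_d$, we obtain a square matrix that is diagonal (the $\sigma'$-column of $\uncalflow(\circuit(\Upsilon,\sigma))$ vanishes for $\sigma'\neq\sigma$ both outside $\Upsilon$) with nonzero diagonal entries. Hence the vectors are linearly independent, and combined with the cardinality count they form a basis of $\Flow_d(\Sigma)$.

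There is essentially no hard step here — the argument is a direct transcription of the cut-space case, with Proposition~\ref{flow} (via Theorem~\ref{charflow}) playing the role that Lemma~\ref{cut-vector-formula} played for cuts. The one point that deserves a sentence of care is the claim that $\circuit(\Upsilon,\sigma)$ meets $\Sigma_d\sm\Upsilon_d$ only in $\sigma$; this is immediate from the definition of the fundamental circuit as the unique circuit inside $\Upsilon\cup\sigma$, together with the fact (standard matroid theory, cf.\ \cite[p.~78]{Oxley}) that $\Upsilon$ being independent forces any circuit in $\Upsilon\cup\sigma$ to contain $\sigma$. The nonvanishing of the $\sigma$-entry, needed so that the diagonal matrix is invertible, is exactly the last assertion of Proposition~\ref{flow}.
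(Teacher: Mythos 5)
Your proposal is correct and follows essentially the same route as the paper: a dimension count ($\dim\Flow_d(\Sigma)=|\Sigma_d|-r$) reduces the claim to linear independence, which is established by observing that the matrix of the vectors $\uncalflow(\circuit(\Upsilon,\sigma))$ restricted to the columns indexed by $\Sigma_d\sm\Upsilon_d$ is diagonal with nonzero diagonal entries, the nonvanishing coming from Proposition~\ref{flow}. Your write-up simply spells out in more detail the support facts that the paper's proof leaves implicit.
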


\begin{proof}
The flow space is the kernel
of a matrix with $|\Sigma_d|$ columns and rank $|\Upsilon_d|$,
so its dimension is $|\Sigma_d|-|\Upsilon_d|$.  Therefore, it is enough to show that
the $\uncalflow(\circuit(\Upsilon,\sigma))$ are linearly independent.  Indeed, consider the matrix $W$ whose
rows are the vectors $\uncalflow(\circuit(\Upsilon,\sigma))$; its maximal square submatrix $W'$ whose columns correspond to $\Sigma\sm\Upsilon$
has nonzero entries on the diagonal but zeroes elsewhere.
\end{proof}

\begin{example}
Recall the bipyramid of Example~\ref{bipyramid-cut}, and its spanning tree $\Upsilon$.
Then $\circuit(\Upsilon, 125) = \{125, 123, 135, 235\}$, and 
$\circuit(\Upsilon, 134) = \{134, 123, 124, 234\}$.
If we instead consider the spanning tree $\Upsilon' = \{124, 125, 134, 135, 235\}$, 
then $\circuit(\Upsilon', 123) = \{123, 125, 135, 235\}$, and 
$\circuit(\Upsilon', 234) = \{234, 124, 125, 134, 135, 235\}$.
Each of these circuits is homeomorphic to a 2-sphere,
and the corresponding flow vectors are the homology classes they
determine.  
Furthermore, each of $\{\uncalflow(\circuit(\Upsilon, 125)), \uncalflow(\circuit(\Upsilon, 134))\}$ and 
$\{\uncalflow(\circuit(\Upsilon', 123)), \uncalflow(\circuit(\Upsilon', 234))\}$ is a basis of the flow space.
\end{example}

\section{Integral Bases for the Cut and Flow Lattices} \label{integral-bases-section}

Recall that the \emph{cut lattice} and \emph{flow lattice} of~$\Sigma$ are defined as
$$
\CC = \CC(\Sigma) = \im_\Zz\cbd_d \subseteq \Zz^n, \qquad \FF = \FF(\Sigma) = \ker_\Zz\bd_d \subseteq \Zz^n.
$$
In this section, we study the conditions under which the vector space bases of Theorems~\ref{calibration}
and~\ref{flow-basis} are integral bases for the cut and flow lattices respectively.

\begin{theorem} \label{integral-basis-cut}
Suppose that $\Sigma$ has a cellular spanning forest~$\Upsilon$
such that $\HH_{d-1}(\Upsilon;\Zz)$ is torsion-free.  Then
$$\{\calcutta{\Upsilon}{\sigma}\st \sigma\in\Upsilon\}$$
is an integral basis for the cut lattice~$\CC(\Sigma)$,
where $\calcutta{\Upsilon}{\sigma}$ is defined as in equation~\eqref{calcut-upsilon-sigma}.
\end{theorem}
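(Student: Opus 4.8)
The plan is to show that the vector space basis $\{\calcutta{\Upsilon}{\sigma}\st\sigma\in\Upsilon\}$ of $\Cut_d(\Sigma)$ from Theorem~\ref{cut-basis} (together with the calibration of Theorem~\ref{calibration}) actually spans $\CC(\Sigma)$ over $\Zz$, under the hypothesis $\tor(\HH_{d-1}(\Upsilon;\Zz))=0$. Since each $\calcutta{\Upsilon}{\sigma}\in\CC(\Sigma)=\im_\Zz\cbd$ — this is part of the content of Theorem~\ref{calibration}, as $\calcutta{\Upsilon}{\sigma}=\frac{1}{\mu_\Upsilon}\cbd v_{\Upsilon\sm\sigma,\sigma}$ lies in the cut space and has integer entries, and one checks it is in fact $\cbd$ of an integer chain — one inclusion is automatic. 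The work is the reverse inclusion: every integral coboundary $\cbd\gamma$, $\gamma\in C_{d-1}(\Sigma;\Zz)$, is a $\Zz$-linear combination of the $\calcutta{\Upsilon}{\sigma}$.

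First I would reduce to understanding the ``change of basis'' matrix on the coordinates indexed by $\Upsilon$. The key structural fact, already used in the proof of Theorem~\ref{cut-basis}, is that $\supp\calcutta{\Upsilon}{\sigma}=\bond(\Upsilon,\sigma)$ contains $\sigma$ but no other facet of $\Upsilon$. Hence, if we restrict any cut-vector to the coordinates in $\Upsilon$, the matrix expressing the $\calcutta{\Upsilon}{\sigma}$ in terms of the standard basis is \emph{diagonal}, with diagonal entry in position $\sigma$ equal to the $\rho=\sigma$ coefficient, namely $\varepsilon^{\Upsilon\sm\sigma}_{\sigma,\sigma}\torh_{d-1}((\Upsilon\sm\sigma)\cup\sigma)=\pm\torh_{d-1}(\Upsilon)$, which equals $\pm1$ by the torsion-free hypothesis. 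So the $\Upsilon$-coordinates of the $\calcutta{\Upsilon}{\sigma}$ form (up to signs) the identity matrix. It follows that for any $w\in\CC(\Sigma)$, the vector $w-\sum_{\sigma\in\Upsilon}\pm\langle w,\sigma\rangle\,\calcutta{\Upsilon}{\sigma}$ is a cut-vector vanishing on all of $\Upsilon$; since the $\calcutta{\Upsilon}{\sigma}$ span $\Cut_d(\Sigma)$ and a nonzero cut-vector must meet every cellular spanning forest (its support contains a cocircuit, which meets the basis $\Upsilon$), this difference is $0$. The coefficients $\pm\langle w,\sigma\rangle$ are integers because $w$ has integer entries, so $w$ lies in the $\Zz$-span of the $\calcutta{\Upsilon}{\sigma}$.

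I would then assemble these observations: the family is $\Zz$-linearly independent (it is already $\Rr$-linearly independent by Theorem~\ref{cut-basis}), each member lies in $\CC(\Sigma)$, and conversely every element of $\CC(\Sigma)$ is recovered as the explicit integer combination above. That gives the integral basis. The one point requiring genuine care — the likely main obstacle — is pinning down that the $\rho=\sigma$ coefficient of the \emph{calibrated} vector is exactly $\pm\torh_{d-1}(\Upsilon)$ and hence $\pm1$: this comes from Theorem~\ref{calibration}'s formula $\calcut{A}{B}=\sum_{\rho\in B}\varepsilon^A_{\sigma,\rho}\torh_{d-1}(A\cup\rho)\,\rho$ with $A=\Upsilon\sm\sigma$, $\rho=\sigma$, so that $A\cup\rho=\Upsilon$ and the coefficient is $\pm\torh_{d-1}(\Upsilon)=\pm1$; one must also confirm $\varepsilon^A_{\sigma,\sigma}=+1$, which is immediate from the definition of the relative sign. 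A secondary subtlety is verifying that a cut-vector supported off $\Upsilon$ entirely must be zero — but this is exactly Proposition~\ref{bond-is-minimal}: a nonzero cut-vector's support contains a cocircuit, and every cocircuit meets every basis of $\MM(\Sigma)$, in particular $\Upsilon$. Everything else is bookkeeping.
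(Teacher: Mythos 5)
Your argument for spanning and independence is the same as the paper's (the restriction of the matrix of calibrated vectors to the rows $\Upsilon_d$ is diagonal with entries $\pm\torh_{d-1}(\Upsilon)=\pm1$, and a cut-vector vanishing on all of $\Upsilon$ is zero because its support would have to contain a cocircuit, which meets every basis of $\MM(\Sigma)$). But there is a genuine gap at the point you dismiss with ``one checks it is in fact $\cbd$ of an integer chain.'' Theorem~\ref{calibration} only asserts that $\calcutta{\Upsilon}{\sigma}$ lies in the cut \emph{space} and has integer entries, i.e.\ that it lies in the saturation $\Cut_d(\Sigma)\cap\Zz^n$; it does not assert membership in $\CC(\Sigma)=\im_\Zz\cbd$, and this is not automatic. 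Indeed $\calcutta{\Upsilon}{\sigma}=\frac{1}{\mu_\Upsilon}\cbd v$ with $v$ an integer $(d-1)$-chain, and dividing by $\mu_\Upsilon$ can in principle leave $\im_\Zz\cbd$ while keeping integer entries: in general $(\Cut_d(\Sigma)\cap\Zz^n)/\CC(\Sigma)\isom\tor(\HH^d(\Sigma;\Zz))$, which is nonzero whenever $\HH_{d-1}(\Sigma;\Zz)$ has torsion (e.g.\ Example~\ref{RPtwo}). So what your argument actually establishes is that your vectors form an integral basis of $\Cut_d(\Sigma)\cap\Zz^n$ (your spanning step works verbatim for any integer cut-vector, not just for integral coboundaries), and the missing step is precisely the identification $\Cut_d(\Sigma)\cap\Zz^n=\CC(\Sigma)$.

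This is where the hypothesis on $\Upsilon$ must be used a second time, and it is how the paper closes the argument: since $\Upsilon_{(d-1)}=\Sigma_{(d-1)}$, the group $\HH_{d-1}(\Sigma;\Zz)$ is a quotient of $\HH_{d-1}(\Upsilon;\Zz)$ of the same rank, so torsion-freeness of the latter forces $\tor(\HH_{d-1}(\Sigma;\Zz))=0$; combined with equation~\eqref{UCTC} and the fact that $\Cut_d(\Sigma)\cap\Zz^n$ is a summand of $\Zz^n$, this gives $(\Cut_d(\Sigma)\cap\Zz^n)/\CC(\Sigma)\isom\tor(\HH_{d-1}(\Sigma;\Zz))=0$, i.e.\ the cut lattice is saturated and your basis of the saturation is a basis of $\CC(\Sigma)$. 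Without some argument of this kind (or a direct verification that $\mu_\Upsilon$ divides $v$ modulo $\ker\cbd$, which amounts to the same homological fact), the claimed inclusion $\{\calcutta{\Upsilon}{\sigma}\}\subseteq\CC(\Sigma)$ is unsupported, and the proof is incomplete.
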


\begin{proof}
Consider the $n\x r$ matrix with columns $\calcutta{\Upsilon}{\sigma}$
for $\sigma\in\Upsilon_d$.
Its restriction to the rows $\Upsilon_d$ is diagonal, and by Theorem~\ref{cut-basis}
and the hypothesis on $\HH_{d-1}(\Upsilon;\Zz)$, its entries are all $\pm1$.  Therefore, the $\calcutta{\Upsilon}{\sigma}$
form an integral basis for the lattice $\Cut_d(\Sigma)\cap\Zz^n$.
Meanwhile,
$$(\Cut_d(\Sigma)\cap\Zz^n)/\CC_d(\Sigma)=\tor(\HH^d(\Sigma;\Zz))\isom\tor(\HH_{d-1}(\Sigma;\Zz))$$
where the first equality is because $\Cut_d(\Sigma)\cap\Zz^n$ is a summand of $\Zz^n$, 
and the second one is equation~\eqref{UCTC}.
On the other hand, $\HH_{d-1}(\Sigma;\Zz)$ is a quotient of $\HH_{d-1}(\Upsilon;\Zz)$ of equal rank; 
in particular, $\tor(\HH_{d-1}(\Sigma;\Zz))=0$ and
in fact $\Cut_d(\Sigma)\cap\Zz^n=\CC_d(\Sigma)$.
\end{proof}

Next we consider integral bases of the flow lattice.
For a circuit~$C$, define
$$\calflow(C)=\frac1g\uncalflow(C)$$
where $\uncalflow(C)$ is the characteristic vector
defined in Section~\ref{flowspace-section}
and $g$ is the gcd of its coefficients.  Thus $\calflow(C)$
generates the rank-1 free $\Zz$-module of flow vectors supported
on $C$.

\begin{theorem}\label{integral-basis-flow}
Suppose that $\Sigma$ has a cellular spanning forest $\Upsilon$ such that $\HH_{d-1}(\Upsilon;\Zz)=\HH_{d-1}(\Sigma;\Zz)$.
Then $\{\calflow(\circuit(\Upsilon,\sigma))\st \sigma\not\in\Upsilon\}$ is an integral basis for the
flow lattice~$\FF(\Sigma)$.
\end{theorem}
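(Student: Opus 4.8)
The plan is to translate the hypothesis on~$\Upsilon$ into a statement about the representation matrix of the cellular matroid relative to the basis~$\Upsilon$, and then read off the integral basis directly in the resulting coordinates. Essentially all of the work is in the first step; after it, everything is bookkeeping.

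Write $\bd=\bd_d=[\,\bd_\Upsilon\mid\bd_{\Sigma\sm\Upsilon}\,]$, splitting the columns along the partition $\Sigma_d=\Upsilon_d\sqcup(\Sigma_d\sm\Upsilon_d)$. Since $\Upsilon$ is a cellular spanning forest, the columns of $\bd_\Upsilon$ form an $\Rr$-basis of $\im_\Rr\bd$, so there is a unique rational matrix $X$, of size $|\Upsilon_d|\x|\Sigma_d\sm\Upsilon_d|$, with $\bd_{\Sigma\sm\Upsilon}=\bd_\Upsilon X$; this is the standard representation of $\MM(\Sigma)$ with respect to the basis $\Upsilon$, and the support of its column indexed by $\sigma\notin\Upsilon$ is $\circuit(\Upsilon,\sigma)\sm\sigma$. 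The crux is the equivalence
\[
\HH_{d-1}(\Upsilon;\Zz)=\HH_{d-1}(\Sigma;\Zz)\quad\Longleftrightarrow\quad X\text{ is an integer matrix.}
\]
To see this, note that since $\Upsilon_{(d-1)}=\Sigma_{(d-1)}$ we have $\HH_{d-1}(\Upsilon;\Zz)=\ker\bd_{d-1}/\im_\Zz\bd_\Upsilon$ and $\HH_{d-1}(\Sigma;\Zz)=\ker\bd_{d-1}/\im_\Zz\bd$, with $\im_\Zz\bd_\Upsilon\subseteq\im_\Zz\bd\subseteq\ker\bd_{d-1}$; hence the natural surjection of the former onto the latter has kernel $\im_\Zz\bd/\im_\Zz\bd_\Upsilon$, and the hypothesis is equivalent to $\im_\Zz\bd=\im_\Zz\bd_\Upsilon$, i.e.\ to every column of $\bd_{\Sigma\sm\Upsilon}$ lying in $\im_\Zz\bd_\Upsilon$. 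As $\bd_\Upsilon$ is injective over $\Qq$, the rational matrix $X$ is forced to supply the coefficients of those integer combinations, so this holds precisely when $X$ is integral.

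Now assume $X$ is integral. Then $\ker_\Zz\bd=\{(-Xv,\,v)\st v\in\Zz^{|\Sigma_d\sm\Upsilon_d|}\}$: if $\bd(u,v)^T=0$ then $\bd_\Upsilon(u+Xv)=0$, and $u+Xv$ is an integer vector lying in $\ker_\Rr\bd_\Upsilon=0$. Consequently the coordinate projection $\pi$ that forgets the $\Upsilon_d$-entries restricts to a group isomorphism $\FF(\Sigma)=\ker_\Zz\bd\xrightarrow{\,\sim\,}\Zz^{|\Sigma_d\sm\Upsilon_d|}$, with inverse $v\mapsto(-Xv,v)$. For each $\sigma\notin\Upsilon$, the corresponding column $c_\sigma$ of $\left[\begin{smallmatrix}-X\\ I\end{smallmatrix}\right]$ is an integer flow vector whose support is contained in $\Upsilon_d\cup\sigma$ and contains $\sigma$, so $\supp(c_\sigma)=\circuit(\Upsilon,\sigma)$ (the only circuit of $\MM(\Sigma)$ inside $\Upsilon_d\cup\sigma$), and $c_\sigma$ is primitive in $\Zz^n$ because its $\sigma$-entry equals $1$. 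Since $\calflow(\circuit(\Upsilon,\sigma))$ is by definition a primitive generator of the rank-one lattice of integer flow vectors supported on $\circuit(\Upsilon,\sigma)$, we get $\calflow(\circuit(\Upsilon,\sigma))=\pm c_\sigma$, hence $\pi(\calflow(\circuit(\Upsilon,\sigma)))=\pm e_\sigma$. (This route does not even invoke Theorem~\ref{charflow}.)

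Finally, $\{\pm e_\sigma\st\sigma\notin\Upsilon\}$ is an integral basis of $\Zz^{|\Sigma_d\sm\Upsilon_d|}$ and $\pi$ is an isomorphism onto it, so pulling back shows $\{\calflow(\circuit(\Upsilon,\sigma))\st\sigma\notin\Upsilon\}$ is an integral basis of $\FF(\Sigma)$; that it is already an $\Rr$-basis of the flow space is Theorem~\ref{flow-basis}. I expect the only genuine obstacle to be the displayed equivalence — recognizing that the homological condition on $\Upsilon$ says exactly that the matroid representation $X$ has integer entries; once that is in hand the argument is routine. An alternative, essentially equivalent presentation follows the proof of Theorem~\ref{flow-basis}: restrict the matrix of circuit vectors to the columns indexed by $\Sigma_d\sm\Upsilon_d$, where it is diagonal, show each diagonal entry is a unit under the hypothesis, and then descend from $\Flow_d(\Sigma)\cap\Zz^n$ to $\FF(\Sigma)$ using the saturation of $\ker_\Zz\bd$ (Proposition~\ref{TFAE-lattices}); but ``the diagonal entries are units'' is once more precisely the integrality of $X$.
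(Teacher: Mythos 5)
Your proof is correct and follows essentially the same route as the paper: the crucial step in both is that the hypothesis forces the columns of $\bd$ indexed by $\Upsilon$ to $\Zz$-span the column lattice, yielding for each $\sigma\notin\Upsilon$ an integer flow vector supported on $\circuit(\Upsilon,\sigma)$ with entry $\pm1$ in position $\sigma$, which is then identified (by primitivity) with $\pm\calflow(\circuit(\Upsilon,\sigma))$. Your explicit parametrization of $\ker_\Zz\bd$ by the columns of $\left[\begin{smallmatrix}-X\\ I\end{smallmatrix}\right]$ is just a repackaging of the paper's observation that the matrix $W'$ becomes the identity, so that the lattice spanned by these vectors is saturated and hence equals $\FF(\Sigma)$.
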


\begin{proof}
By the hypothesis on~$\Upsilon$, the columns of~$\bd$ indexed by the facets in~$\Upsilon$ form a $\Zz$-basis for the column space.
That is, for every $\sigma\not\in\Upsilon$, the column~$\bd_\sigma$ is
a $\Zz$-linear combination of the columns of~$\Upsilon$; equivalently, there is an
element $w_\sigma$ of the flow lattice, with support
$\circuit(\Upsilon,\sigma)$, whose coefficient in the~$\sigma$ position is $\pm 1$.  But then~$w_\sigma$ and
$\calflow(\circuit(\Upsilon,\sigma))$ are integer vectors with the same linear span, both of which have the gcd of their entries equal to 1;
therefore, they must be equal up to sign.  Therefore, retaining the notation of Theorem~\ref{flow-basis}, the matrix~$W'$
is in fact the identity matrix, and it follows that the lattice spanned by the $\calflow(\circuit(\Upsilon,\sigma))$ is saturated, so it must equal
the flow lattice of~$\Sigma$.
\end{proof}

If $\Sigma$ is a graph, then all its subcomplexes and relative complexes are torsion-free
(equivalently, its incidence matrix is totally unimodular).  Therefore, Theorems~\ref{integral-basis-cut} and~\ref{integral-basis-flow}
give integral bases for the cut and flow lattices respectively.  These are, up to sign, the integral bases
constructed combinatorially in, e.g., \cite[Chapter~14]{GodRoy}.

\section{Groups and Lattices} \label{groups-lattices}

In this section, we define the critical, cocritical, and cutflow groups of a cell complex.  We identify
the relationships between these groups and to the discriminant groups of the cut and flow lattices.  The case of a graph was studied in detail by
Bacher, de~la~Harpe and Nagnibeda~\cite{BHN} and Biggs~\cite{Biggs-Chip},
and is presented concisely in~\cite[Chapter~14]{GodRoy}.

Throughout this section, let~$\Sigma$ be a cell complex of dimension~$d$ with $n$ facets,
and identify both $C_d(\Sigma;\Zz)$ and $C^d(\Sigma;\Zz)$ with $\Zz^n$.

\begin{definition}
The \emph{critical group} of~$\Sigma$ is
\begin{equation*} 
K(\Sigma):=\tor(\ker\bd_{d-1}/\im\bd_{d}\cbd_{d})=\tor(\coker(\bd_{d}\cbd_{d})).
\end{equation*}
Here and henceforth, all kernels and images are taken over~$\Zz$.
\end{definition}
Note that the second and third terms in the definition are equivalent because
$\ker\bd_{d-1}$ is a summand of $C_{d-1}(\Sigma;\Zz)$ as a free $\Zz$-module.
This definition coincides with the usual definition of the critical group of a graph in the case $d=1$, and with
the authors' previous definition in \cite{Critical} in the case that $\Sigma$ is $\Qq$-acyclic in codimension~one
(when $\ker\bd_{d-1}/\im\bd_{d}\cbd_{d}$ is its own torsion summand).

\begin{definition}
The \emph{cutflow group} of $\Sigma$ is $\Zz^n/(\CC(\Sigma)\oplus\FF(\Sigma))$.
\end{definition}

Note that the cutflow group is finite because the cut and flow spaces are orthogonal
complements in~$\Rr^n$ (Proposition~\ref{cut-flow-ortho}), so in particular $\CC\oplus\FF$ spans~$\Rr^n$ as a vector space.
Observe also that the cutflow group does not decompose into separate cut and flow pieces; that is, it is not isomorphic to
the group $G=((\Flow_d\cap \Zz^n)/\ker\bd_d)\oplus((\Cut_d\cap \Zz^n)/\im\cbd_d),$
even when $\Sigma$ is a graph.  For example, if $\Sigma$ is the complete graph on three vertices, whose boundary map can be written as
$$\bd=\begin{pmatrix} 1&-1&0\\-1&0&1\\0&1&-1\end{pmatrix},$$
then $\ker\bd=\Span\{(1,1,1)^T\}$ and $\im\cbd=\Span\{(1,-1,0)^T,(1,0,-1)^T\}$.
So $G$ is the trivial group, while $\Zz^n / ( \ker\bd_d\oplus\im\cbd_d )=K_1(\Sigma)\isom\Zz_3$.

In order to define the cocritical group of a cell complex, we first need to introduce the notion of acyclization.

\begin{definition} \label{acyclization}
An \emph{acyclization} of~$\Sigma$ is a ($d+1)$-dimensional
complex $\Omega$ such that $\Omega_{(d)}=\Sigma$ and
$\HH_{d+1}(\Omega;\Zz)=\HH_d(\Omega;\Zz)=0$.
\end{definition}

Algebraically, this construction corresponds to
finding an integral basis for $\ker\bd_d(\Sigma)$ and declaring its elements to be the columns
of $\bd_{d+1}(\Omega)$ (so in particular $|\Omega_{(d+1)}|=\tilde\beta_d(\Sigma)$).  
Topologically, it corresponds to filling in just enough $d$-dimensional cycles with $(d+1)$-dimensional faces to remove all $d$-dimensional homology.
The definition of
acyclization and equation~\eqref{UCTC} together imply
that $\HH^{d+1}(\Omega;\Zz) = 0$; that is, $\cbd_{d+1}(\Omega)$ is surjective.

\begin{definition}
The \emph{cocritical group} $K^*(\Sigma)$ is
\begin{equation*}
K^*(\Sigma) := C_{d+1}(\Omega;\Zz)/\im\cbd_{d+1}\bd_{d+1}=\coker\Ldu_{d+1}.
\end{equation*}
\end{definition}

It is not immediate that the group $K^*(\Sigma)$ is independent of the choice of~$\Omega$;
we will prove this independence as part of Theorem~\ref{new-flow-theorem}.
For the moment, it is at least clear that $K^*(\Sigma)$ is finite, since $\rank\cbd_{d+1}=\rank\Ldu_{d+1}=\rank C_{d+1}(\Omega;\Zz)$.  
%
In the special case of a graph,  $\Ldu_{d+1}$ is the ``intersection matrix'' defined by Kotani and Sunada~\cite{KS}.
(See also~\cite[Sections 2, 3]{Biggs-Crypto}.)  

\begin{remark} \label{poincare}
As in \cite{Critical}, one can define critical and cocritical groups in every dimension by
$K_i(\Sigma)=\tor(C_i(\Sigma;\Zz)/\im\bd_{i+1}\cbd_{i+1})$ 
and
$K^*_i(\Sigma)= \tor(C_i(\Sigma;\Zz)/\im\cbd_i\bd_i)$.
If the cellular chain complexes of $\Sigma$ and $\Psi$ are algebraically dual (for example, if $\Sigma$ and $\Psi$  are Poincar\'e dual cell structures on a compact orientable $d$-manifold),
then $K_i(\Psi)=K_{d-i}^*(\Sigma)$ for all $i$.
\end{remark}

We now come to the main results of the second half of the paper:
the critical and cocritical groups are isomorphic to the discriminant groups
of the cut and flow lattices respectively, and the cutflow group mediates between the
critical and cocritical groups, with an ``error term'' given by homology.

\begin{theorem} \label{new-cut-theorem}
Let~$\Sigma$ be a cell complex of dimension~$d$ with $n$ facets.  Then there is a commutative diagram
\begin{equation} \label{cut-CD}
\xymatrix{
0 \rto & \Zz^n/(\CC\oplus\FF) \dto^{\alpha}\rto^{\psi} & \CC^\sharp/\CC\dto^{\beta}\rto & \tor(\HH^d(\Sigma;\Zz))\dto^{\gamma}\rto & 0 \\
0 \rto & \im \bd_d /\im \bd_d\cbd_d  \rto & K(\Sigma)\rto & \tor(\HH_{d-1}(\Sigma;\Zz))\rto & 0 }
\end{equation}
in which all vertical maps are isomorphisms. In particular, $K(\Sigma) \cong \CC^\sharp/\CC$.
\end{theorem}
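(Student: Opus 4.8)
\emph{Overview.} The plan is to route everything through the boundary map $\bd_d$ and the orthogonal projection $\pi\colon\Rr^n\to\Cut_d(\Sigma)$. The one fact used repeatedly is that $\bd_d$ restricts to an isomorphism $\Cut_d(\Sigma)\xrightarrow{\sim}\im_\Rr\bd_d$, since $\ker_\Rr\bd_d=\Flow_d(\Sigma)$ meets $\Cut_d(\Sigma)$ only in $0$ by Proposition~\ref{cut-flow-ortho}. \emph{Step 1: a description of $\CC^\sharp$, and the map $\beta$.} For $v\in\Cut_d(\Sigma)$ and $\gamma\in C_{d-1}(\Sigma;\Zz)$, equation~\eqref{scalar-product-identity} gives $\scp{v,\cbd_d\gamma}=\scp{\bd_d v,\gamma}$; since $\CC=\im_\Zz\cbd_d$ and $\bd_d v$ pairs integrally with every integer vector exactly when $\bd_d v$ is itself integral, this yields
$$\CC^\sharp=\{v\in\Cut_d(\Sigma)\st \bd_d v\in C_{d-1}(\Sigma;\Zz)\}.$$
Hence $\bd_d$ carries the inclusion $\CC\subseteq\CC^\sharp$ onto $\im\bd_d\cbd_d\subseteq\widehat{\im\bd_d}$, where $\widehat{\im\bd_d}$ denotes the saturation of $\im\bd_d$ in $C_{d-1}(\Sigma;\Zz)$; being injective on $\Cut_d(\Sigma)$, it induces an isomorphism $\beta\colon\CC^\sharp/\CC\xrightarrow{\sim}\widehat{\im\bd_d}/\im\bd_d\cbd_d$.

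\emph{Step 2: the bottom row, and $\CC^\sharp/\CC\cong K(\Sigma)$.} Because $\im_\Rr\bd_d\subseteq\ker_\Rr\bd_{d-1}$, the lattice $\widehat{\im\bd_d}$ is already saturated inside $\ker\bd_{d-1}$, so $\ker\bd_{d-1}/\widehat{\im\bd_d}$ is torsion-free; as $\widehat{\im\bd_d}/\im\bd_d\cbd_d$ is finite (both lattices have rank $r$), it is therefore the torsion subgroup of $\ker\bd_{d-1}/\im\bd_d\cbd_d$, i.e. $\widehat{\im\bd_d}/\im\bd_d\cbd_d=K(\Sigma)$. With Step~1 this gives the asserted $\beta\colon\CC^\sharp/\CC\cong K(\Sigma)$. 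Likewise $\widehat{\im\bd_d}/\im\bd_d=\tor(C_{d-1}(\Sigma;\Zz)/\im\bd_d)=\tor(\HH_{d-1}(\Sigma;\Zz))$, since $\ker\bd_{d-1}$ is a saturated sublattice of $C_{d-1}(\Sigma;\Zz)$. The chain $\im\bd_d\cbd_d\subseteq\im\bd_d\subseteq\widehat{\im\bd_d}$ then produces the short exact sequence
$$0\to \im\bd_d/\im\bd_d\cbd_d\to K(\Sigma)\to \tor(\HH_{d-1}(\Sigma;\Zz))\to 0,$$
which is the bottom row of~\eqref{cut-CD}.

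\emph{Step 3: vertical maps, exactness of the top row, commutativity.} Define $\alpha(x)=\bd_d x\bmod\im\bd_d\cbd_d$ and $\psi(x)=\pi(x)\bmod\CC$. Both are well defined and injective by short computations: $x\in\CC\oplus\FF$ forces $\bd_d x\in\bd_d\CC=\im\bd_d\cbd_d$, and conversely $\bd_d x\in\im\bd_d\cbd_d$ gives $x-c\in\ker_\Zz\bd_d=\FF$ for some $c\in\CC$, so $\alpha$ is an isomorphism onto $\im\bd_d/\im\bd_d\cbd_d$; meanwhile $\pi(x)\in\CC^\sharp$ by Step~1 (as $\bd_d\pi(x)=\bd_d x\in C_{d-1}(\Sigma;\Zz)$), and $\pi(x)\in\CC$ forces $x-\pi(x)\in\Zz^n\cap\Flow_d=\FF$, so $\psi$ is injective. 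The left square commutes on the nose, since $x-\pi(x)\in\Flow_d=\ker_\Rr\bd_d$ gives $\bd_d\pi(x)=\bd_d x$, whence $\beta\psi(x)=\alpha(x)$ in $K(\Sigma)$. The image of $\psi$ is $\pi(\Zz^n)/\CC$, which $\beta$ maps onto $\bd_d(\pi(\Zz^n))/\im\bd_d\cbd_d=\im\bd_d/\im\bd_d\cbd_d$; hence $\coker\psi\cong\CC^\sharp/\pi(\Zz^n)\cong\widehat{\im\bd_d}/\im\bd_d\cong\tor(\HH_{d-1}(\Sigma;\Zz))$, and taking $\gamma$ to be the universal-coefficients isomorphism~\eqref{UCTC} gives the top row and makes the right square commute. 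All three vertical maps are then isomorphisms (alternatively, with $\alpha$ and $\gamma$ isomorphisms and both rows exact, $\beta$ is one by the five lemma).

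\emph{Main obstacle.} I expect Step~2 to require the most care: one must check that the saturation $\widehat{\im\bd_d}$ of $\im\bd_d$ \emph{in} $C_{d-1}(\Sigma;\Zz)$ is already saturated \emph{inside} $\ker\bd_{d-1}$, so that $\widehat{\im\bd_d}/\im\bd_d\cbd_d$ is genuinely the full torsion subgroup defining $K(\Sigma)$; and in Step~3 one must verify that $\pi(\Zz^n)=\{v\in\CC^\sharp\st\bd_d v\in\im\bd_d\}$, which is what matches the image of $\psi$ with the subobject $\im\bd_d/\im\bd_d\cbd_d$ of the bottom row. Everything else is routine diagram chasing together with the standard properties of the $\tor$ functor recalled before Lemma~\ref{exactness-of-tor}.
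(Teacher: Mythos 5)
Your argument is correct, and while it shares the paper's overall architecture (bottom row from the chain $\im\bd_d\cbd_d\subseteq\im\bd_d\subseteq\ker\bd_{d-1}$, top row via the orthogonal projection onto $\Cut_d(\Sigma)$, vertical maps induced by $\bd_d$), the key identifications are obtained by a genuinely different route. The paper's Step~2 fixes an integral basis of $\CC$, invokes Proposition~\ref{lattice-projection} for the dual basis and projection matrix, and identifies $\CC^\sharp/\im\psi$ with $\tor(\HH^d(\Sigma;\Zz))$ via the invariant factors of $V$ versus $V^T$; you avoid all basis/matrix computations by the adjointness characterization $\CC^\sharp=\{v\in\Cut_d(\Sigma)\st\bd_d v\in C_{d-1}(\Sigma;\Zz)\}$ and by working with the saturation $\widehat{\im\bd_d}$. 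This buys you two simplifications: $\beta$ is exhibited as an isomorphism $\CC^\sharp/\CC\isom\widehat{\im\bd_d}/\im\bd_d\cbd_d=K(\Sigma)$ outright (the paper only proves $\beta$ injective and recovers surjectivity at the end), and you never need the snake lemma, since $\coker\psi$ is computed directly as $\widehat{\im\bd_d}/\im\bd_d=\tor(\HH_{d-1}(\Sigma;\Zz))$. The trade-off is at the right-hand square: you \emph{decree} $\gamma$ to be the universal-coefficient isomorphism \eqref{UCTC} and define the top-right surjection so as to make the square commute, which is perfectly legitimate for the theorem as stated (it asserts the existence of such a diagram), whereas the paper realizes the surjection $\CC^\sharp/\CC\to\tor(\HH^d(\Sigma;\Zz))$ intrinsically in cohomological terms and lets $\gamma$ emerge from the diagram chase. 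Two small points you leave implicit are easy to supply from your overview fact that $\bd_d$ restricts to an isomorphism $\Cut_d(\Sigma)\to\im_\Rr\bd_d$: surjectivity of $\bd_d\colon\CC^\sharp\to\widehat{\im\bd_d}$ (any integral vector of $\im_\Rr\bd_d$ is $\bd_d v$ for a unique $v\in\Cut_d(\Sigma)$, which then lies in $\CC^\sharp$), and well-definedness of $\psi$ on the quotient (for $x=c+f\in\CC\oplus\FF$ one has $\pi(x)=c\in\CC$).
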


\begin{proof}
\emph{Step 1: Construct the bottom row of~\eqref{cut-CD}.}
The inclusions $\im\bd_d\cbd_d\subseteq\im\bd_d\subseteq\ker\bd_{d-1}$
give rise to the short exact sequence
\begin{equation*} 
0 \to \im \bd_d/\im \bd_d\cbd_d \to \ker \bd_{d-1}/\im \bd_d\cbd_d \to \ker\bd_{d-1}/\im\bd_d\to 0.
\end{equation*}
The first term is finite (because $\rank\bd_d=\rank\bd_d\cbd_d$), so taking torsion summands yields the desired short exact sequence.

\emph{Step 2: Construct the top row of~\eqref{cut-CD}.}
Let $r=\rank\bd_d$, let $\{v_1,\dots,v_r\}$ be an integral
basis of~$\CC$, and let $V$ be the matrix with columns $v_1,\dots,v_r$.
By Proposition~\ref{lattice-projection}, 
the dual basis $\{v_1^*,\dots,v_r^*\}$ for~$\CC^\sharp$ consists of the columns of
the matrix $W=V(V^TV)^{-1}$.  Let $\psi$ be the orthogonal projection $\Rr^n\to\Cut(\Sigma)$,
which is given by the matrix $P=WV^T=V(V^TV)^{-1}V^T$ (see Proposition~\ref{lattice-projection}).  Then
  $$\im\psi=\colspace\left(\underbrace{\big[v_1^*\cdots v_r^*\big]}_{W}\ \underbrace{\big[v_1\cdots v_r\big]^T}_{V^T}\right).$$
The $i^{th}$ column of $P$ equals $W$ times the $i^{th}$ column of $V^T$.
If we identify $\CC^\sharp$ with $\Zz^r$ via the basis
$\{v_1^*,\dots,v_r^*\}$, then $\im(\psi)$ is just the column space of $V^T$.
So $\CC^\sharp/\im\psi\isom\Zz^r/\colspace(V^T)$,
which is a finite group because $\rank V=r$.  Since the matrices $V$ and $V^T$ have the same invariant factors,
we have
  $$\Zz^r/\colspace(V^T)\isom\tor(\Zz^n/\colspace(V))=\tor(C^d(\Sigma;\Zz)/\im\cbd_d)=\tor(\HH^d(\Sigma;\Zz)).$$
Meanwhile, $\im \psi \supseteq\CC$ because $PV=V$.
Since $\ker \psi =\FF$, we have
$(\im \psi)/\CC=(\Zz^n/\FF)/\CC=\Zz^n/(\CC\oplus\FF)$.
Therefore, the inclusions
$\CC\subseteq\im \psi \subseteq\CC^\sharp$ give rise to the short exact sequence
in the top row of~\eqref{cut-CD}.

\emph{Step 3: Describe the vertical maps in~\eqref{cut-CD}.}
The maps $\alpha$ and $\beta$ are each induced by $\bd_d$ in the following ways.
First, the image of the cutflow group under~$\bd_d$ is
$$\bd_d\left(\Zz^n/(\FF\oplus\CC)\right)
=\bd_d\left(\Zz^n/(\ker\bd_d\oplus\im\cbd_d)\right)
= \im\bd_d/\im\bd_d\cbd_d.$$
On the other hand, $\bd_d$ acts injectively on the cutflow group (since the latter is a subquotient of $\Zz^n/\ker\bd_d$).
So the map labeled $\alpha$ is an isomorphism.

The cellular boundary map $\bd_d$ also gives rise to the map $\beta\colon \CC^\sharp/\CC\to K(\Sigma)$, as we now explain.
First, note that $\bd_d\,\CC^\sharp\subseteq\im_\Rr\bd_d\subseteq\ker_\Rr\bd_{d-1}$.
Second, observe that for every $w\in\CC^\sharp$ and $\rho\in C_{d-1}(\Sigma;\Zz)$, we have $\scp{\bd w,\rho}=\scp{w,\cbd\rho}\in\Zz$,
by equation~\eqref{scalar-product-identity} and the definition of dual lattice.
Therefore, $\bd_d\,\CC^\sharp\subseteq C_{d-1}(\Sigma;\Zz)$.  It follows that
$\bd_d$ maps $\CC^\sharp$ to $(\ker_\Rr\bd_{d-1})\cap C_{d-1}(\Sigma;\Zz)=\ker_\Zz\bd_{d-1}$,
hence defines a map $\beta\colon \CC^\sharp/\CC\to\ker_\Zz\bd_{d-1}/\im_\Zz \bd_d\cbd_d$.
Since $\CC^\sharp/\CC$ is finite, the image of $\beta$ is purely torsion, hence contained in $K(\Sigma)$.
Moreover, $\beta$ is injective because $(\ker\bd_d)\cap\CC^\sharp=\FF\cap\CC^\sharp=0$ by Proposition~\ref{cut-flow-ortho}.

Every element of $\Rr^n$ can be written uniquely as $c+f$ with
$c\in\Cut(\Sigma)$ and $f\in\Flow(\Sigma)$.  The map $\psi$ is
orthogonal projection onto $\Cut(\Sigma)$, so $\bd_d(c+f)=\bd_d
c=\bd_d(\psi(c+f))$.  Hence the left-hand square commutes.  The map
$\gamma$ is then uniquely defined by diagram-chasing.

The snake lemma now implies that $\ker\gamma=0$.  Since the groups
$\tor(\HH_{d-1}(\Sigma;\Zz))$ and $\tor(\HH^d(\Sigma;\Zz))$ are
abstractly isomorphic by equation~\eqref{UCTC}, in fact $\gamma$ must be an isomorphism and
$\coker\gamma=0$ as well.  Applying the snake lemma again, we see
that all the vertical maps in~\eqref{cut-CD} are isomorphisms.
\end{proof}

\begin{theorem} \label{new-flow-theorem}
Let~$\Sigma$ be a cell complex of dimension~$d$ with $n$ facets.  Then there is a short exact sequence
\begin{equation} \label{flow-SES}
0 \to \tor(\HH_{d-1}(\Sigma;\Zz)) \to \Zz^n/(\CC\oplus\FF)\to \FF^\sharp/\FF\to 0.
\end{equation}
Moreover, $K^*(\Sigma)\isom\FF^\sharp/\FF$.
\end{theorem}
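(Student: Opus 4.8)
The plan is to mirror the proof of Theorem~\ref{new-cut-theorem}, but to exploit the fact that $\FF=\ker_\Zz\bd_d$ is \emph{saturated} (Proposition~\ref{TFAE-lattices}), which is exactly what makes the flow side behave more simply than the cut side.

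First I would construct the short exact sequence~\eqref{flow-SES}. Let $\pi\colon\Rr^n\to\Flow(\Sigma)$ be orthogonal projection; by Proposition~\ref{cut-flow-ortho} its kernel is $\Cut(\Sigma)$, so $\ker(\pi|_{\Zz^n})=\Cut(\Sigma)\cap\Zz^n=\hat\CC$, the saturation of the cut lattice. The key point is that $\pi(\Zz^n)$ is \emph{all} of $\FF^\sharp$: choosing an integral basis of $\FF$ and assembling it into an $n\times\betti_d(\Sigma)$ matrix $M$, saturatedness lets us extend the columns of $M$ to an integral basis of $\Zz^n$, so a Laplace expansion shows the gcd of the maximal minors of $M$ is $1$, whence Proposition~\ref{lattice-projection}(3) identifies $\FF^\sharp$ with the lattice generated by the columns of the projection matrix $M(M^TM)^{-1}M^T$, i.e., with $\pi(\Zz^n)$. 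Therefore $\pi$ induces $\Zz^n/\hat\CC\isom\FF^\sharp$ and hence $\Zz^n/(\hat\CC\oplus\FF)\isom\FF^\sharp/\FF$. The natural surjection $\Zz^n/(\CC\oplus\FF)\to\Zz^n/(\hat\CC\oplus\FF)$ then has kernel $(\hat\CC\oplus\FF)/(\CC\oplus\FF)\isom\hat\CC/\CC=\tor(\Zz^n/\CC)=\tor(\HH^d(\Sigma;\Zz))\isom\tor(\HH_{d-1}(\Sigma;\Zz))$, the last isomorphism by equation~\eqref{UCTC} (exactly the computation in Step~2 of Theorem~\ref{new-cut-theorem}). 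This is the asserted sequence~\eqref{flow-SES}.

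Next I would identify $K^*(\Sigma)$ with $\FF^\sharp/\FF$. Fix an acyclization $\Omega$ (Definition~\ref{acyclization}) and take $M=\bd_{d+1}(\Omega)$; by construction the columns of $M$ form an integral basis of $\ker_\Zz\bd_d=\FF$, and $M^TM=\cbd_{d+1}\bd_{d+1}=\Ldu_{d+1}(\Omega)$. By Proposition~\ref{lattice-projection}(1) the columns of $M(M^TM)^{-1}$ form the dual basis of $\FF^\sharp$; expressing the inclusion $\FF\hookrightarrow\FF^\sharp$ in terms of the basis (columns of $M$) and the dual basis (columns of $M(M^TM)^{-1}$) yields precisely the matrix $M^TM$, since $\scp{m_i,m_\ell}=(M^TM)_{i\ell}$ and the dual basis satisfies $\scp{m_i,w_j}=\delta_{ij}$. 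Hence $\FF^\sharp/\FF\isom\coker(M^TM)=\coker\Ldu_{d+1}=K^*(\Sigma)$. Since $\FF$ and $\FF^\sharp$ depend only on $\Sigma$, this simultaneously proves that $K^*(\Sigma)$ is independent of the choice of $\Omega$ (two acyclizations give integral bases of the same lattice $\FF$, so their matrices $\Ldu_{d+1}$ are congruent by a unimodular matrix and have isomorphic cokernels), completing the claim promised after Definition~\ref{acyclization}.

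I expect the main obstacle to be the identity $\pi(\Zz^n)=\FF^\sharp$: that the orthogonal projection of the standard lattice onto the flow space surjects onto the \emph{entire} dual flow lattice rather than a proper sublattice. This is precisely where saturatedness of $\FF$ is indispensable, in contrast with the cut space, where $\im\psi$ sits strictly between $\CC$ and $\CC^\sharp$ with quotient $\tor(\HH^d(\Sigma;\Zz))$; it is also what forces the torsion ``error term'' to surface on the cut side of $\CC\oplus\FF$ rather than being shared symmetrically. Everything else is routine bookkeeping with the third isomorphism theorem, Proposition~\ref{cut-flow-ortho} (to get $\hat\CC\cap\FF=0$), and the standard lattice facts of Propositions~\ref{TFAE-lattices} and~\ref{lattice-projection}.
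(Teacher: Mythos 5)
Your proposal is correct and follows essentially the same route as the paper: orthogonal projection onto the flow space maps $\Zz^n$ onto $\FF^\sharp$ (via the gcd-of-minors argument and Proposition~\ref{lattice-projection}) with kernel $\hat\CC$, yielding the exact sequence with error term $\hat\CC/\CC\isom\tor(\HH^d(\Sigma;\Zz))\isom\tor(\HH_{d-1}(\Sigma;\Zz))$. Your identification $\FF^\sharp/\FF\isom\coker(M^TM)=\coker\Ldu_{d+1}=K^*(\Sigma)$ via the Gram matrix of an integral basis is just a repackaging of the paper's step of applying $\cbd_{d+1}$ to $\FF^\sharp$, so the two proofs coincide in substance.
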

\begin{proof}
Let $\Omega$ be an acyclization of $\Sigma$.  By construction, the columns of the matrix $A$ representing $\bd_{d+1}(\Omega)$
form an integral basis for $\FF=\ker\bd_d$.  Again,
the matrix $Q=A(A^TA)^{-1}A^T$ represents orthogonal projection
$\Rr^n\to\Flow(\Sigma)$.  The maximal minors of $A$ have gcd 1 (because $\FF$ is a summand of $\Zz^n$, 
so the columns of $A$ are part of an integral basis), so by 
Proposition~\ref{lattice-projection}, 
the columns of $Q$ generate the lattice $\FF^\sharp$.  Therefore,
if we regard $Q$ as a map of $\Zz$-modules, it defines a surjective homomorphism $\Zz^n\to\FF^\sharp$.
This map fixes~$\FF$ pointwise and its kernel is the saturation~$\hat\CC:=(\CC\otimes\Rr)\cap\Zz^n$.
So we have short exact sequences
$0 \to \hat\CC \to \Zz^n/\FF\to\FF^\sharp/\FF \to 0$ and
$$
0 \to \hat\CC/\CC \to \Zz^n/(\CC\oplus\FF)\to\FF^\sharp/\FF \to 0.
$$
Since $\hat\CC$ is a summand of $\Zz^n$ by Proposition~\ref{TFAE-lattices}, we can identify $\hat\CC/\CC$ with $\tor(\HH^d(\Sigma;\Zz))
\isom\tor(\HH_{d-1}(\Sigma;\Zz))$,
 which gives the short exact sequence~\eqref{flow-SES}.

We will now show that $\FF^\sharp/\FF\isom K^*(\Sigma)$.  To see this, observe that
$\cbd_{d+1}(\FF^\sharp)=\cbd_{d+1}(\colspace(Q))=\colspace(A^TQ)=\colspace(A^T)=\im\cbd_{d+1}=C_{d+1}(\Omega)$
(by the construction of an acyclization).
In addition,
$\ker\cbd_{d+1}$ is orthogonal to $\FF^\sharp$, hence their intersection is zero.
Therefore, $\cbd_{d+1}$ defines an isomorphism $\FF^\sharp\to C_{d+1}(\Omega)$.
Moreover, the same map $\cbd_{d+1}$ maps $\FF=\ker\bd_d=\im\bd_{d+1}$ surjectively onto 
$\im\cbd_{d+1}\bd_{d+1}$.
\end{proof}

\begin{corollary} \label{if-no-torsion}
If $\HH_{d-1}(\Sigma;\Zz)$ is torsion-free, then the groups
$K(\Sigma)$, $K^*(\Sigma)$, $\CC^\sharp/\CC$, $\FF^\sharp/\FF$,
and $\Zz^n/(\CC\oplus\FF)$ are all isomorphic to each other.
\end{corollary}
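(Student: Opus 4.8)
The plan is to deduce this immediately from Theorems~\ref{new-cut-theorem} and~\ref{new-flow-theorem}: both of those results produce short exact sequences whose ``error terms'' are torsion (co)homology groups of $\Sigma$ in degree $d-1$ (or $d$), and the hypothesis is precisely that these terms vanish. First I would record that if $\HH_{d-1}(\Sigma;\Zz)$ is torsion-free then $\tor(\HH_{d-1}(\Sigma;\Zz))=0$, and consequently $\tor(\HH^d(\Sigma;\Zz))=0$ as well, by the universal-coefficient identity~\eqref{UCTC}.

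Next I would feed this into the top row of diagram~\eqref{cut-CD}. The surjection $\CC^\sharp/\CC\to\tor(\HH^d(\Sigma;\Zz))$ now has trivial target, so the map $\psi\colon\Zz^n/(\CC\oplus\FF)\to\CC^\sharp/\CC$ is an isomorphism; combining this with the isomorphism $K(\Sigma)\isom\CC^\sharp/\CC$ already furnished by Theorem~\ref{new-cut-theorem} gives $\Zz^n/(\CC\oplus\FF)\isom\CC^\sharp/\CC\isom K(\Sigma)$. Dually, in the short exact sequence~\eqref{flow-SES} of Theorem~\ref{new-flow-theorem} the leftmost group $\tor(\HH_{d-1}(\Sigma;\Zz))$ is now zero, so $\Zz^n/(\CC\oplus\FF)\to\FF^\sharp/\FF$ is an isomorphism; together with $K^*(\Sigma)\isom\FF^\sharp/\FF$ from that theorem, this yields $\Zz^n/(\CC\oplus\FF)\isom\FF^\sharp/\FF\isom K^*(\Sigma)$. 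Chaining both strings of isomorphisms through the common group $\Zz^n/(\CC\oplus\FF)$ shows that all five groups are isomorphic.

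There is essentially no obstacle: the entire content is observing that the hypothesis simultaneously kills the torsion summands appearing on the cut side and the flow side, after which the two earlier theorems do all the work. The one point worth stating with care is that the cut side requires the vanishing of $\tor(\HH^d(\Sigma;\Zz))$ rather than directly of $\tor(\HH_{d-1}(\Sigma;\Zz))$, and that these agree by~\eqref{UCTC}; everything else is formal diagram-chasing already carried out in the proofs of Theorems~\ref{new-cut-theorem} and~\ref{new-flow-theorem}.
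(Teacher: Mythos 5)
Your proposal is correct and is exactly the argument the paper intends: the corollary is stated as an immediate consequence of Theorems~\ref{new-cut-theorem} and~\ref{new-flow-theorem}, obtained by noting that the hypothesis kills $\tor(\HH_{d-1}(\Sigma;\Zz))$ and hence, via~\eqref{UCTC}, also $\tor(\HH^d(\Sigma;\Zz))$, so both short exact sequences collapse to isomorphisms through the common cutflow group. Your care in distinguishing the two torsion groups and invoking~\eqref{UCTC} matches the paper's setup precisely.
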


Corollary~\ref{if-no-torsion} includes the case that $\Sigma$ is a
graph, as studied by Bacher, de~la~Harpe and Nagnibeda~\cite{BHN} and
Biggs~\cite{Biggs-Chip}.  It also includes the combinatorially important
family of Cohen-Macaulay (over $\Zz$) simplicial complexes, as well as
cellulations of compact orientable manifolds.

\begin{example}
Suppose that $\HH_d(\Sigma;\Zz)=\Zz$ and that $\HH_{d-1}(\Sigma;\Zz)$
is torsion-free.  Then the flow lattice is generated by a single
element, and it follows from Corollary~\ref{if-no-torsion} that
$K(\Sigma)\isom K^*(\Sigma)\isom\FF^\sharp/\FF$ is a cyclic group.
For instance, if $\Sigma$ is homeomorphic to a cellular sphere or torus,
then the critical group is cyclic of order equal to the number of
facets.  (The authors had previously proved this fact for simplicial
spheres~\cite[Theorem~3.7]{Critical}, but this approach
using the cocritical group makes the statement more general
and the proof transparent.)
\end{example}

\begin{example} \label{RPtwo}
Let $\Sigma$ be the standard cellulation $e^0\cup e^1\cup e^2$
of the real projective plane,
whose cellular chain complex is
$$\Zz~\xrightarrow{\bd_2=2}~\Zz~\xrightarrow{\bd_1=0}~\Zz.$$
Then $\CC=\im\cbd_2=2\Zz$, $\CC^\sharp=\frac12\Zz$, and
$K(\Sigma)=\CC^\sharp/\CC=\Zz_4$.  Meanwhile,
$\FF=\FF^\sharp=\FF^\sharp/\FF=K^*(\Sigma)=0$.
The cutflow group is $\Zz_2$.  Note that the rows of Theorem~\ref{new-cut-theorem} are not split in this case.
\end{example}

\begin{example} \label{Vic}
Let $a,b\in\Zz\sm\{0\}$.
Let $\Sigma$ be the cell complex whose cellular chain complex is
$$\Zz^2~\xrightarrow{\bd_2=[a\ b]}~\Zz~\xrightarrow{\bd_1=0}~\Zz.$$
Topologically, $\Sigma$ consists of a vertex $e^0$, a loop $e^1$, and two facets
of dimension~2 attached along $e^1$ by maps of degrees~$a$ and $b$.
Then
$$\CC^\sharp/\CC=\Zz_\tau,\qquad \Zz^2/(\CC\oplus\FF)=\Zz_{\tau/g},\qquad
\FF^\sharp/\FF=\Zz_{\tau/g^2},$$
where $\tau=a^2+b^2$ and $g=\gcd(a,b)$. Note that $\tau=\tau_2(\Sigma)$ is the complexity of $\Sigma$ (see equation~\eqref{complexity})
and that $g=|\HH_1(\Sigma;\Zz)|$.
The short exact sequence of Theorem~\ref{new-flow-theorem}
is in general not split (for example, if $a=6$ and $b=2$).
\end{example}

\section{Enumeration}\label{sec:sizes}

For a connected graph, the cardinality of the critical group equals the number of spanning trees.
In this section, we calculate the cardinalities of the
various group invariants of~$\Sigma$.

Examples~\ref{RPtwo} and~\ref{Vic} both
indicate that $K(\Sigma)\isom\CC^\sharp/\CC$ should have cardinality
equal to the complexity $\tau(\Sigma)$.
Indeed, in Theorem~4.2 of~\cite{Critical}, the authors proved that $|K(\Sigma)|=\tau(\Sigma)$ whenever $\Sigma$ has a cellular spanning tree $\Upsilon$ such that $\HH_{d-1}(\Upsilon;\Zz)=\HH_{d-1}(\Sigma;\Zz)=0$ (in particular, $\Sigma$ must be not merely
$\Qq$-acyclic, but actually $\Zz$-acyclic, in codimension~one).
Here we prove that this condition is actually not necessary: for any cell complex,
the order of the critical group $K(\Sigma)$
equals the torsion-weighted complexity $\tau(\Sigma)$.  Our approach is to determine the size of the discriminant group
$\CC^\sharp/\CC$ directly, then use the short exact sequences of Theorems~\ref{new-cut-theorem} and~\ref{new-flow-theorem} to calculate
the sizes of the other groups.

\begin{theorem} \label{all-counts}
Let $\Sigma$ be a $d$-dimensional cell complex and 
let $\torh=\torh_{d-1}(\Sigma)=|\tor(\HH_{d-1}(\Sigma;\Zz))|$.  Then
\begin{align*}
|\CC^\sharp/\CC| = |K(\Sigma)| &= \tau_d(\Sigma),\\
|\Zz^n/(\CC\oplus\FF)| &= \tau_d(\Sigma)/\torh, \ \ \text{and}\\
|\FF^\sharp/\FF| = |K^*(\Sigma)| &= \tau_d(\Sigma)/\torh^2.
\end{align*}
\end{theorem}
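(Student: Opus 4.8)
The plan is to compute the one quantity $|\CC^\sharp/\CC|$ directly and then read off everything else from the exact sequences of Theorems~\ref{new-cut-theorem} and~\ref{new-flow-theorem}. Granting $|\CC^\sharp/\CC| = \tau_d(\Sigma)$, Theorem~\ref{new-cut-theorem} gives $|K(\Sigma)| = |\CC^\sharp/\CC| = \tau_d(\Sigma)$; since cardinalities multiply along short exact sequences and $|\tor(\HH^d(\Sigma;\Zz))| = \torh$ by~\eqref{UCTC}, the top row of~\eqref{cut-CD} gives $|\Zz^n/(\CC\oplus\FF)| = |\CC^\sharp/\CC|/\torh = \tau_d(\Sigma)/\torh$; and then the short exact sequence~\eqref{flow-SES}, together with $K^*(\Sigma)\isom\FF^\sharp/\FF$ from Theorem~\ref{new-flow-theorem}, gives $|K^*(\Sigma)| = |\FF^\sharp/\FF| = |\Zz^n/(\CC\oplus\FF)|/\torh = \tau_d(\Sigma)/\torh^2$.

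For the remaining equality I would not try to produce a canonical integral basis of the cut lattice (Theorem~\ref{integral-basis-cut} supplies one only when $\Sigma$ has a torsion-free cellular spanning forest, which need not exist) but instead work with an arbitrary one. Fix an $n\x r$ integer matrix $M$ whose columns form an integral basis of $\CC = \im_\Zz\cbd_d$, so that $|\CC^\sharp/\CC| = \det(M^TM)$ as recalled in Section~\ref{lattice-subsection}. Expanding by the Binet-Cauchy formula, $\det(M^TM) = \sum_\Upsilon(\det M_\Upsilon)^2$, the sum over all $r$-element subsets $\Upsilon\subseteq\Sigma_d$, where $M_\Upsilon$ is the $r\x r$ submatrix of $M$ on the rows indexed by $\Upsilon$. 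It thus suffices to show: (i) $\det M_\Upsilon = 0$ unless $\Upsilon$ is a cellular spanning forest; and (ii) if $\Upsilon$ is a cellular spanning forest then $|\det M_\Upsilon| = \torh_{d-1}(\Upsilon)$. Together with~\eqref{complexity} these give $\det(M^TM) = \sum_{\text{CSFs }\Upsilon}\torh_{d-1}(\Upsilon)^2 = \tau_d(\Sigma)$.

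Claim (i) is routine: since each column of $\cbd_d$ lies in $\CC$ we may write $\cbd_d = MX$, and $X$ has full row rank $r$ because $\rank\cbd_d = \rank\bd_d = r$; hence the rows of $M$ indexed by $\Upsilon$ are linearly independent precisely when the rows of $\cbd_d$ — equivalently the columns of $\bd_d$ — indexed by $\Upsilon$ are, that is, precisely when $\Upsilon$ is a column basis of $\bd_d$, which is the defining property of a cellular spanning forest. For claim (ii), let $\pi_\Upsilon\colon C_d(\Sigma;\Zz)\to C_d(\Upsilon;\Zz)$ be restriction to the facets of $\Upsilon$; then $\pi_\Upsilon(\CC)$ is the lattice spanned by the columns of $M_\Upsilon$, so $|\det M_\Upsilon| = |C_d(\Upsilon;\Zz)/\pi_\Upsilon(\CC)|$. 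The key observation is that $\pi_\Upsilon\,\cbd_d(\Sigma) = \cbd_d(\Upsilon)$, the top cellular coboundary of the subcomplex $\Upsilon$ (which has the same $(d-1)$-skeleton as $\Sigma$), so that $\pi_\Upsilon(\CC) = \im_\Zz\cbd_d(\Upsilon)$ and therefore $C_d(\Upsilon;\Zz)/\pi_\Upsilon(\CC) = \HH^d(\Upsilon;\Zz)$. Since $\Upsilon$ is a cellular spanning forest, $\HH_d(\Upsilon;\Zz) = 0$ by~\eqref{acyc-condn}, so $\HH^d(\Upsilon;\Zz)$ is finite; by~\eqref{UCTC} it is isomorphic to $\tor(\HH_{d-1}(\Upsilon;\Zz))$, of order $\torh_{d-1}(\Upsilon)$, giving $|\det M_\Upsilon| = \torh_{d-1}(\Upsilon)$. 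I expect this identification to be the main obstacle: recognizing the restricted cut lattice $\pi_\Upsilon(\CC)$ as the coboundary image of the subcomplex $\Upsilon$, so that the universal coefficient theorem turns it into torsion homology. Everything else is bookkeeping with Binet-Cauchy and with the exact sequences already established.
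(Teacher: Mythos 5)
Your proposal is correct, and it reduces to the same first move as the paper: both arguments invoke Theorems~\ref{new-cut-theorem} and~\ref{new-flow-theorem} to reduce everything to showing $|\CC^\sharp/\CC|=\tau_d(\Sigma)$, and both then expand a Gram determinant by Binet--Cauchy over $r$-element sets of facets. Where you genuinely diverge is in which Gram matrix you expand and how you identify the summands. The paper does not work with an integral basis of $\CC$ at all: it passes to the sublattice $\RR\subseteq\CC$ spanned by a row basis of $\bd$ (the rows indexed by the complement of a relatively acyclic subcomplex $\Gamma$), computes $|\RR^\sharp/\RR|=\det(\bd_R\cbd_R)$ by Binet--Cauchy, identifies each nonzero summand as $\torh_{d-1}(\Upsilon,\Gamma)^2$ via Proposition~\ref{det-is-homology}, and then converts relative to absolute torsion and cancels the index $|\CC/\RR|=\torh_{d-1}(\Sigma,\Gamma)/\torh_{d-1}(\Sigma)$ using Proposition~\ref{relative-tor-formula}. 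You instead take an honest integral basis $M$ of $\CC$, use $|\CC^\sharp/\CC|=\det(M^TM)$ directly, and identify $|\det M_\Upsilon|$ for a forest $\Upsilon$ as $|C_d(\Upsilon;\Zz)/\pi_\Upsilon(\CC)|=|\HH^d(\Upsilon;\Zz)|=\torh_{d-1}(\Upsilon)$, the crucial observation being $\pi_\Upsilon\cbd_d(\Sigma)=\cbd_d(\Upsilon)$ (valid because $\Upsilon_{(d-1)}=\Sigma_{(d-1)}$), followed by the universal coefficient theorem~\eqref{UCTC}; your claim (i) is likewise sound, since $\cbd_d=MX$ with $X$ of full row rank forces $\det M_\Upsilon\neq0$ exactly when the columns of $\bd$ indexed by $\Upsilon$ are independent. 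Your route is more self-contained: it needs no relatively acyclic subcomplexes, no correction factors, and neither Proposition~\ref{det-is-homology} nor Proposition~\ref{relative-tor-formula} beyond the definition of a CSF as a column basis. What the paper's detour through $\RR$ buys is that the computation runs through the explicit reduced Laplacian $\bd_R\cbd_R$, reusing the machinery already built for the matrix-forest theorem (Proposition~\ref{CMFT}) and making the link between $|\CC^\sharp/\CC|$ and the Laplacian-determinant formula visible, whereas your basis $M$ is abstract and non-constructive (which is fine, since only the determinant is needed).
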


\begin{proof}
By Theorems~\ref{new-cut-theorem} and~\ref{new-flow-theorem}, it is enough to prove that $|\CC^\sharp/\CC|=\tau_d(\Sigma)$.

Let $R$ be a set of $(d-1)$-cells corresponding to a row basis for $\bd$
(hence a vector space basis for $\Cut(\Sigma)$); let $\RR$ be the lattice spanned by those rows (which is a full-rank integral sublattice of $\CC$);
and let $\Gamma=(\Sigma_{d-1}\sm R)\cup\Sigma_{(d-2)}$.
The inclusions $\RR\subseteq\CC\subseteq C^d(\Sigma)$ give rise to a short exact sequence
$0\to\CC/\RR\to\HH^d(\Sigma,\Gamma;\Zz)\to\HH^d(\Sigma;\Zz)\to0$.
Since $\CC/\RR$ is finite, the torsion summands form
a short exact sequence (see Section~\ref{cellular-forest}).
Taking cardinalities and using equation~\eqref{UCTC}, we get
\begin{equation} \label{CC-LL}
|\CC/\RR|=
\frac{|\tor(\HH^d(\Sigma,\Gamma;\Zz))|}{|\tor(\HH^d(\Sigma;\Zz))|}=\frac{\torh_{d-1}(\Sigma,\Gamma)}{\torh_{d-1}(\Sigma)}.
\end{equation}
The inclusions $\RR\subseteq\CC\subseteq\CC^\sharp\subseteq\RR^\sharp$ give
$|\RR^\sharp/\RR| = |\RR^\sharp/\CC^\sharp| \cdot|\CC^\sharp/\CC| \cdot|\CC/\RR|$.
Moreover, $\RR^\sharp/\CC^\sharp\isom\CC/\RR$.  By equation~\eqref{CC-LL} and Binet-Cauchy, we have
\begin{align*}
|\CC^\sharp/\CC| &= \frac{|\RR^\sharp/\RR|}{|\RR^\sharp/\CC^\sharp| \cdot|\CC/\RR|}
= \frac{\torh_{d-1}(\Sigma)^2}{\torh_{d-1}(\Sigma,\Gamma)^2}\ |\RR^\sharp/\RR|\\
&= \frac{\torh_{d-1}(\Sigma)^2}{\torh_{d-1}(\Sigma,\Gamma)^2}\ \det(\bd_R\cbd_R)\\
&= \sum_{\Upsilon\subseteq \Sigma_d\st |\Upsilon|=r} \frac{\torh_{d-1}(\Sigma)^2}{\torh_{d-1}(\Sigma,\Gamma)^2}\ \det(\bd_{R,\Upsilon})^2.
\end{align*}
By Proposition~\ref{det-is-homology}, the summand is nonzero if and only if $\Upsilon$ is a cellular spanning forest.
In that case, the matrix $\bd_{R,\Upsilon}$ is the cellular boundary matrix of the relative complex $(\Upsilon,\Gamma)$,
and its determinant is (up to sign) $\torh_{d-1}(\Upsilon,\Gamma)$, so
by Proposition~\ref{relative-tor-formula} we have
$$
|\CC^\sharp/\CC|
~= \sum_\Upsilon \frac{\torh_{d-1}(\Sigma)^2}{\torh_{d-1}(\Sigma,\Gamma)^2}\ \torh_{d-1}(\Upsilon,\Gamma)^2 
~= \sum_\Upsilon \torh_{d-1}(\Upsilon)^2
$$
with the sums over all cellular spanning forests $\Upsilon\subseteq \Sigma$.
\end{proof}

Dually, we can interpret the cardinality of the cocritical group as
enumerating cellular spanning forests by relative torsion (co)homology,
as follows.

\begin{theorem} \label{cocritical-count}
Let $\Omega$ be an acyclization of $\Sigma$.  Then
$$|K^*(\Sigma)| = \sum_{ \Upsilon} |\HH^{d+1}(\Omega,\Upsilon;\Zz)|^2
= \sum_{ \Upsilon} |\HH_d(\Omega,\Upsilon;\Zz)|^2$$
with the sums over all cellular spanning forests $\Upsilon\subseteq \Sigma$.
\end{theorem}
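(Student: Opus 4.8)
The plan is to evaluate $|K^*(\Sigma)|$ directly from the definition $K^*(\Sigma)=\coker\Ldu_{d+1}=\coker(\cbd_{d+1}\bd_{d+1})$, and then to recognize the resulting sum of squared minors as the desired sum over cellular spanning forests. Fix an acyclization $\Omega$ and let $A$ be the integer matrix of $\bd_{d+1}(\Omega)$; it has $n=|\Sigma_d|=|\Omega_d|$ rows and $m=|\Omega_{d+1}|=\betti_d(\Sigma)$ columns, and by the construction of an acyclization its columns form an integral basis of $\FF=\ker\bd_d$, so $\rank A=m$. The matrix of $\cbd_{d+1}$ is $A^T$, so $\Ldu_{d+1}=A^TA$, which is a nonsingular $m\times m$ integer matrix (its rank equals $\rank C_{d+1}(\Omega)=m$, as noted after the definition of $K^*$). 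Hence $|K^*(\Sigma)|=\det(A^TA)$, and the Binet--Cauchy formula gives
\[
|K^*(\Sigma)|=\det(A^TA)=\sum_{S}(\det A_S)^2,
\]
where $S$ ranges over all $m$-element subsets of $\Sigma_d$ and $A_S$ denotes the square submatrix of $A$ on the rows indexed by $S$.

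Next I would re-index this sum by the complements $\Upsilon=\Sigma_d\setminus S$, which have $|\Upsilon|=n-m=r$. The key observation is that $A_S$ is exactly the top relative boundary matrix of the pair $(\Omega,\Upsilon)$: since $\Omega_{(d)}=\Sigma$ and $\Upsilon_{(d-1)}=\Sigma_{(d-1)}$, the relative cellular chain complex of $(\Omega,\Upsilon)$ is concentrated in degrees $d+1$ and $d$, where it reads $0\to\Zz^m\xrightarrow{A_S}\Zz^S\to 0$. Thus $\HH_{d+1}(\Omega,\Upsilon;\Zz)=\ker A_S$ and $\HH_d(\Omega,\Upsilon;\Zz)=\coker A_S$, and likewise the relative cochain complex is the transpose complex $0\to\Zz^S\xrightarrow{A_S^T}\Zz^m\to 0$ in degrees $d$ and $d+1$, so $\HH^{d+1}(\Omega,\Upsilon;\Zz)=\coker A_S^T$. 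Now $\det A_S\neq 0$ if and only if $\Upsilon$ is a cellular spanning forest: the columns of $A$ span $\ker\bd_d$, so the rows of $A$ indexed by $S$ are linearly independent precisely when $\ker\bd_d$ meets the subspace of $C_d(\Sigma;\Rr)$ spanned by the facets of $\Upsilon$ only at $0$, i.e.\ when $\ker\bd_\Upsilon=0$; since $|\Upsilon|=r=\rank\bd_d$, this is exactly the condition that $\Upsilon_d$ be a column basis of $\bd_d$, which characterizes cellular spanning forests (Section~\ref{tree-Lap-subsection}). For such an $\Upsilon$ the matrix $A_S$ is nonsingular, so $\HH_{d+1}(\Omega,\Upsilon;\Zz)=0$ and $|\HH_d(\Omega,\Upsilon;\Zz)|=|\coker A_S|=|\det A_S|=|\det A_S^T|=|\HH^{d+1}(\Omega,\Upsilon;\Zz)|$, while every other summand $(\det A_S)^2$ vanishes. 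Substituting into the previous display yields
\[
|K^*(\Sigma)|=\sum_{S:\,\det A_S\neq 0}(\det A_S)^2=\sum_{\Upsilon}|\HH_d(\Omega,\Upsilon;\Zz)|^2=\sum_{\Upsilon}|\HH^{d+1}(\Omega,\Upsilon;\Zz)|^2,
\]
the last two sums over all cellular spanning forests $\Upsilon\subseteq\Sigma$.

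The one step I expect to require the most care is the equivalence between $\det A_S\neq 0$ and $\Upsilon$ being a cellular spanning forest, i.e.\ translating the linear-algebra independence condition on the rows of $A$ into the matroid condition on the columns of $\bd_d$; alternatively one can deduce it from the long exact sequence of $(\Omega,\Upsilon)$, using $\HH_{d+1}(\Omega,\Upsilon;\Zz)\isom\HH_d(\Upsilon;\Zz)$ together with $\HH_d(\Omega;\Zz)=\HH_{d+1}(\Omega;\Zz)=0$. Everything else is bookkeeping of degrees in the relative complex together with Binet--Cauchy. As a consistency check, comparing with Theorems~\ref{new-flow-theorem} and~\ref{all-counts} recovers the identity $\sum_\Upsilon|\HH_d(\Omega,\Upsilon;\Zz)|^2=\tau_d(\Sigma)/\torh^2$, which matches the definition of $\tau^*(\Sigma)$ announced in the introduction and reconfirms that $|K^*(\Sigma)|$ is independent of the chosen acyclization $\Omega$.
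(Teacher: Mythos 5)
Your proposal is correct and follows essentially the same route as the paper: apply Binet--Cauchy to $\det(\cbd_{d+1}\bd_{d+1})$, re-index the nonzero minors by complements $\Upsilon=\Sigma_d\sm S$, identify each square submatrix as the top boundary map of the relative complex $(\Omega,\Upsilon)$, and read off $|\det A_S|=|\HH_d(\Omega,\Upsilon;\Zz)|=|\HH^{d+1}(\Omega,\Upsilon;\Zz)|$. The only cosmetic difference is that you verify the equivalence ``$\det A_S\neq0$ iff $\Upsilon$ is a cellular spanning forest'' by a direct linear-algebra argument (or the long exact sequence of the pair), whereas the paper cites Proposition~\ref{det-is-homology} with the dimensions shifted up by one; both justifications are valid.
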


Note that the groups $\HH^{d+1}(\Omega,\Upsilon;\Zz)$ and $\HH_d(\Omega,\Upsilon;\Zz)$
are all finite, by definition of acyclization.

\begin{proof}
Let $\bd_{d+1}=\bd_{d+1}(\Omega)$.
Note that $\rank\bd_{d+1}=\betti_d(\Sigma)$; abbreviate this number as $b$.
By Binet-Cauchy, we have
$$|K^*(\Sigma)| = |\det\cbd_{d+1}\bd_{d+1}|
= \sum_{B\subseteq\Omega_d\st |B|=b} (\det\cbd_B)^2$$
where $\bd_B$ denotes the submatrix of $\bd$ with rows $B$.
Letting $\Upsilon=\Sigma\sm B$, 
we can regard $\bd_B$ as the cellular boundary map
of the relative complex $(\Omega,\Upsilon)$, which consists
of~$b$ cells in each of the dimensions~$d$ and $d+1$.
By Proposition~\ref{det-is-homology}, the summand is nonzero
if and only if $\Upsilon$ is a cellular spanning forest
of $\Omega_{(d)}=\Sigma$.  (Note that the $d+1,B,\Upsilon,\Omega_{(d)}$
in the present context correspond respectively to the $d,R,\Gamma,\Upsilon$ of
Proposition~\ref{det-is-homology}.)  For these summands,
$\HH^{d+1}(\Omega,\Upsilon;\Zz)\cong\HH_d(\Omega,\Upsilon;\Zz)$
is a finite group of order $|\det\bd_B|$.
\end{proof}

\begin{remark}
Let $\tau^*(\Sigma)=\sum_\Upsilon |\HH_d(\Omega,\Upsilon;\Zz)|^2$, as in
Theorem~\ref{cocritical-count}.  Then combining Theorems~\ref{all-counts} and Theorems~\ref{cocritical-count} gives
\begin{alignat*}{2} 
|\CC^\sharp/\CC| = |K(\Sigma)| &=  \tau(\Sigma)  & &= \tau^*(\Sigma)\cdot\torh^2,\\
|\FF^\sharp/\FF| = |K^*(\Sigma)| &=  \tau^*(\Sigma) & &= \tau(\Sigma)/\torh^2,\\
|\Zz^n/(\CC\oplus\FF)| &=  \tau(\Sigma)/\torh  & &= \tau^*(\Sigma)\cdot\torh,
\end{alignat*}
highlighting the duality between the cut and flow lattices.
\end{remark}

\section{Bounds on combinatorial invariants from lattice geometry} \label{Hermite}

Let $n\geq 1$ be an integer.  The \emph{Hermite constant} $\gamma_n$
is defined as the maximum value of
\begin{equation} \label{herm}
\left(\min_{x\in\LL\sm\{0\}}\langle x,x\rangle\right) (|\LL^\sharp/\LL|)^{-1/n}
\end{equation}
over all lattices $\LL\subseteq\Rr^n$.  The Hermite constant arises both
in the study of quadratic forms and in sphere packing;
see~\cite[Section~4]{Lag}.  It is known that $\gamma_n$ is finite for every
$n$, although the precise values are known only for $1\leq n\leq 8$
and $n=24$~\cite{CK}.

As observed by Kotani and Sunada \cite{KS}, if $\LL=\FF$ is the flow
lattice of a connected graph, then the shortest vector in $\FF$ is the
characteristic vector of a cycle of minimum length; therefore, the
numerator in equation~\eqref{herm} is the girth of $G$.  Meanwhile,
$|\FF^\sharp/\FF|$ is the number of spanning trees.  We now generalize
this theorem to cell complexes.

\begin{definition}
Let $\Sigma$ be a cell complex.  The \emph{girth} and the
\emph{connectivity} are defined as the cardinalities of, respectively
the smallest circuit and the smallest cocircuit of the cellular
matroid of $\Sigma$.
\end{definition}

\begin{theorem}\label{Hermite-relations}
Let $\Sigma$ be a cell complex of dimension~$d$ with girth $g$
and connectivity $k$, and top boundary map of rank~$r$.  Let $b=\rank\FF(\Sigma)=\rank\HH_d(\Sigma;\Zz)$.  Then
$$k \tau(\Sigma)^{-1/r} \leq \gamma_r\qquad\text{and}\qquad g \tau^*(\Sigma)^{-1/b} \leq \gamma_b.$$
\end{theorem}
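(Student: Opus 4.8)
The plan is to apply the defining inequality~\eqref{herm} of the Hermite constant to two carefully chosen lattices: the cut lattice $\CC(\Sigma)$ for the first bound, and the flow lattice $\FF(\Sigma)$ for the second. The key point is that each of these lattices lives in an ambient space of the correct dimension ($\CC$ has rank $r$ and $\FF$ has rank $b=\betti_d(\Sigma)$, by Proposition~\ref{cut-flow-ortho} and the rank computation in Section~\ref{matroid-subsection}), so that $\gamma_r$ and $\gamma_b$ are the relevant Hermite constants. The two ingredients we need for each lattice are (i) the order of its discriminant group, which is supplied by Theorem~\ref{all-counts}, and (ii) a lower bound on the squared length of its shortest nonzero vector, which is where the combinatorial invariants $k$ and $g$ enter.

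For the flow lattice, the argument is the cleaner of the two and directly generalizes Kotani--Sunada. Any nonzero $v\in\FF=\ker_\Zz\bd_d$ has support containing a circuit of the cellular matroid $\MM(\Sigma)$, since the minimal supports of flow vectors are exactly the circuits (this is the flow-space analogue of Proposition~\ref{bond-is-minimal}, and is implicit in the discussion preceding Theorem~\ref{charflow}). Hence $|\supp(v)|\geq g$, and since $v$ is a nonzero integer vector, $\langle v,v\rangle\geq|\supp(v)|\geq g$. Plugging into~\eqref{herm} with $\LL=\FF$, whose discriminant $|\FF^\sharp/\FF|$ equals $\tau^*(\Sigma)$ by Theorem~\ref{all-counts}, gives $g\cdot\tau^*(\Sigma)^{-1/b}\leq\gamma_b$.

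For the cut lattice the argument is formally dual: a nonzero $v\in\CC=\im_\Zz\cbd_d\subseteq\Cut_d(\Sigma)$ has support containing a cocircuit (bond) of $\MM(\Sigma)$ by Proposition~\ref{bond-is-minimal}, so $|\supp(v)|\geq k$, whence $\langle v,v\rangle\geq k$ for nonzero integral $v$. Since $|\CC^\sharp/\CC|=\tau(\Sigma)$ by Theorem~\ref{all-counts} and $\CC$ has rank $r$, inequality~\eqref{herm} with $\LL=\CC$ yields $k\cdot\tau(\Sigma)^{-1/r}\leq\gamma_r$. One must check that $\CC$ is genuinely a lattice of rank $r$ in $\Rr^r$ — equivalently that $\Cut_d(\Sigma)$, which it spans over $\Rr$, has dimension $r$ — which follows from $\dim\Cut_d=\rank\cbd_d=\rank\bd_d=r$ as in the proof of Proposition~\ref{cut-flow-ortho}.

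The proof is short and the only place that requires any care is the shortest-vector estimate: one must be sure that every \emph{nonzero} lattice vector (not just the minimal-support ones) has enough nonzero entries, which is exactly the statement that its support \emph{contains} a circuit (resp.\ cocircuit), and then that an integer vector with $m$ nonzero entries has squared norm at least $m$. Neither point is deep, but the support-contains-a-circuit fact is the conceptual heart: it is what ties the purely metric Hermite inequality to the matroid-theoretic invariants girth and connectivity. I would state both halves in parallel to emphasize the cut/flow duality already highlighted after Theorem~\ref{cocritical-count}.
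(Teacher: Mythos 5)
Your proposal is correct and follows essentially the same route as the paper's own proof: apply the definition of the Hermite constant to the cut lattice (rank $r$) and the flow lattice, use Theorem~\ref{all-counts} for $|\CC^\sharp/\CC|=\tau(\Sigma)$ and $|\FF^\sharp/\FF|=\tau^*(\Sigma)$, and bound the shortest nonzero vector below by $k$ (resp.\ $g$) because its support contains a cocircuit (resp.\ circuit) and it has integer entries. Your added care about the lattice ranks and the ``support contains a circuit'' step simply makes explicit what the paper states tersely.
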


\begin{proof}
Every nonzero vector of the cut lattice $\CC$ contains a cocircuit in
its support, so $\min_{x\in\CC\sm\{0\}}\langle x,x\rangle\geq k$.  Likewise,
every nonzero vector of the flow lattice $\FF$ of $\Sigma$ contains a
circuit in its support, so $\min_{x\in\FF\sm\{0\}}\langle x,x\rangle\geq g$.
Meanwhile, $|\CC^\sharp/\CC|=\tau$ and $|\FF^\sharp/\FF|=\tau^*$
by Theorem~\ref{all-counts}.
The desired inequalities now follow from applying the definition of
Hermite's constant to the cut and flow lattices respectively.
\end{proof}

\bibliographystyle{alpha}

\end{document}